\newtheorem{theorem}{Theorem}[section]
\newtheorem{lemma}[theorem]{Lemma}
\newtheorem{proposition}[theorem]{Proposition}
\newtheorem{corollary}[theorem]{Corollary}
\newtheorem{conjecture}{Conjecture}
\newtheorem{thm}[conjecture]{Theorem}
\theoremstyle{definition}
\newtheorem{definition}[theorem]{Definition}
\theoremstyle{remark}
\newtheorem{remark}[theorem]{Remark}
\newcommand{\bk}{k}
\newcommand{\ideal}{\triangleleft}
\newcommand{\Spec}{\text{\normalfont Spec}\,}
\newcommand{\Sym}{\text{\normalfont Sym}}
\newcommand{\Set}{\text{\normalfont Set}}
\newcommand{\Amod}{A\text{\normalfont -mod}}
\newcommand{\Alg}{\bk \text{\normalfont -Alg}}
\newcommand{\Defemb}{\text{\normalfont Def}_{\text{\normalfont emb}}}
\newcommand{\hilb}{\mathcal H}
\newcommand{\Mor}{\text{\normalfont Mor}}
\newcommand{\Hom}{\text{\normalfont Hom}}
\newcommand{\Sch}{\text{\normalfont Sch}}
\newcommand{\m}{\mathfrak m}
\newcommand{\ma}{\normalfont\textbf{a}}
\newcommand{\mb}{\normalfont\textbf{b}}
\newcommand{\mx}{\normalfont\textbf{x}}
\newcommand{\Ann}{\text{\normalfont Ann}}
\newcommand{\Apolar}{\text{\normalfont Apolar}}
\newcommand{\op}{\text{\normalfont op}}
\newcommand{\Sa}{S}
\newcommand{\Sb}{S'}
\newcommand{\Sc}{S''}
\newcommand{\Ia}{I}
\newcommand{\Ib}{I'}
\newcommand{\Ic}{I''}
\newcommand{\Ja}{J}
\newcommand{\Jb}{J'}
\newcommand{\Jc}{J''}
\newcommand{\Fa}{F}
\newcommand{\Fb}{F'}
\newcommand{\Fc}{F''}
\title[New elementary components of the Gorenstein locus]{New elementary components of the Gorenstein locus of the Hilbert scheme of points}
\author{Robert Szafarczyk}
\date{4/06/2022}
\thanks{The author would like to thank Joachim Jelisiejew for introducing him to the subject as well as for many helpful suggestions. Without his support this article would not exist. The author was financed by the Ministry of Science and Higher Education of the Republic of Poland project ``Szkoła Orłów", project number 500-D110-06-0465160}
\begin{document}

\begin{abstract}
We construct new explicit examples of nonsmoothable Gorenstein algebras with Hilbert function \((1,n,n,1)\). This gives a new infinite family of elementary components in the Gorenstein locus of the Hilbert scheme of points and solves the cubic case of Iarrobino's conjecture.
\end{abstract}

\maketitle

\tableofcontents

\section{Introduction}

Hilbert schemes of points were first constructed by Grothendieck in 1960-61 \cite{grothendieck}. Since then they have found many applications, notably in combinatorics \cite{comb} and in constructing hyperk\"ahler manifolds \cite{beauville}. Hilbert schemes of points also appear in complexity theory while studying tensor and border ranks \cite{landsberg}. One of the more important results of the theory is that by Fogarty stating that the Hilbert scheme of points of a smooth, irreducible surface is itself smooth and irreducible \cite{fogarty}. The Hilbert scheme of points for three- and higher-dimensional varieties is singular and not well understood.

The topology of the Hilbert scheme of points is still poorly understood and finding its irreducible components remains a challenge. The building blocks for them are the elementary components, those parametrizing subschemes with one-point support. Points of the Hilbert scheme of points corresponding to Gorenstein zero-dimensional subschemes form an open set, called the \emph{Gorenstein locus}. Few components of this locus are known. Additionally, the smooth points of these components are often not explicitly given. Explicit points outside of the smoothable component are of interest in applications to tensors \cite{tensors} and in computations.

Let \(S=\bk[\alpha_1,...,\alpha_n]\) and \(P=\bk[x_1,...,x_n]\) be polynomial rings of \(n\) variables over a field \(\bk\) of characteristic \(0\). There is an action of \(S\) on \(P\) defined as follows
\[\alpha_1^{b_1}\alpha_2^{b_2}...\,\alpha_n^{b_n} \cdot x_1^{a_1}x_2^{a_2}...\,x_n^{a_n} = \begin{cases} x_1^{a_1-b_1}x_2^{a_2-b_2}...\,x_n^{a_n-b_n} & \text{if}\ \forall_i\, a_i\geq b_i \\ 0 & \text{otherwise.} \end{cases}\]
This action is called \emph{contraction} and is somewhat similar to differentiation. For any polynomial \(f\in P\) the set \(\Ann(f)=\{s\in S : s\cdot f = 0\}\) forms a homogeneous ideal of \(S\), called the \emph{apolar ideal}. In this paper, we present the following result.

\begin{thm}
If \(n\geq 6\) (except for \(n=7\)), then for a general \(f\in P\) homogeneous of degree \(3\) the ideal \(\Ann(f)\) is a smooth point of an elementary component of the Hilbert scheme.
\end{thm}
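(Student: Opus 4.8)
The plan is to show that $[\Ann(f)]$ is a smooth point of $\mathrm{Hilb}^{2n+2}(\mathbb A^n)$ lying on an elementary component, by controlling the graded tangent space there. Write $A:=S/\Ann(f)$; for a general cubic $f$ all catalecticant maps of $f$ have maximal rank, so $A$ is Artinian Gorenstein with Hilbert function $(1,n,n,1)$, $\dim_\bk A=2n+2$, and $[\Ann(f)]\in\mathrm{Hilb}^{2n+2}(\mathbb A^n)$. Since $\Ann(f)$ is homogeneous, $[\Ann(f)]$ is fixed by the contracting $\mathbb G_m$-action on $\mathbb A^n$, so the tangent space $T:=\Hom_S(\Ann(f),A)=\Hom_A(\Ann(f)/\Ann(f)^2,A)$ inherits a grading $T=\bigoplus_j T_j$; moreover, as the catalecticants of $f$ have maximal rank, the graded Hilbert scheme of ideals with Hilbert function $(1,n,n,1)$ is, near $[\Ann(f)]$, the smooth variety $\mathbb P(P_3)$ of apolar cubics up to scalar. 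Granting this, the Bia{\l}ynicki--Birula analysis of $\mathbb G_m$-fixed points of Hilbert schemes (as used by Jelisiejew to construct elementary components) shows that $[\Ann(f)]$ is a smooth point of a unique component $\mathcal Z$, which is elementary and has $\dim\mathcal Z=\dim_\bk T$, provided
\[ T_j=0 \ \text{for}\ j\le -2, \qquad \dim_\bk T_{-1}=n, \]
i.e.\ the only tangent directions of negative weight are the $n$ translations $g\mapsto\sum_i v_i\,\partial_{\alpha_i}g\bmod\Ann(f)$. (The mechanism: the nonnegative-weight part of $T$ is integrated by the smooth graded family $\mathbb P(P_3)$ enlarged by the weight-$1$ deformations adjoining a quadratic term to $f$, and the only negative-weight directions are translations, so near $[\Ann(f)]$ the scheme $\mathrm{Hilb}$ is, up to translating the support, a smooth apolar-type family of one-point subschemes.) Thus the theorem reduces to this negative-degree vanishing.

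Next I would turn the vanishing into linear algebra on $f$. Degrees $j\ge 2$ are immediate: $\Ann(f)$ vanishes in degrees $\le 1$ and $A$ in degrees $\ge 4$, so $T_j=0$. For $j\le -1$, from the exact sequence $0\to\mathrm{Der}_\bk(A)\to\mathrm{Der}_\bk(S)\otimes_S A\to T\to T^1_A\to 0$ one sees that the degree-$(-1)$ part of $\mathrm{Der}_\bk(S)\otimes_S A$ is precisely the $n$-dimensional translation space, injecting into $T_{-1}$ for general $f$, so $\dim_\bk T_{-1}=n$ amounts to $(T^1_A)_{-1}=0$, and likewise $T_{\le-2}=0$ to $(T^1_A)_{\le-2}=0$; that is, $A$ has no abstract first-order deformation of negative degree. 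Using the low-degree generation of $\Ann(f)$ for general $f$ (it is generated by quadrics when $n\ge 3$), a homomorphism in $T_{-1}$, resp.\ $T_{-2}$, is determined by its restriction $\Ann(f)_2\to A_1$, resp.\ $\Ann(f)_2\to A_0$, subject to the linear syzygies of $\Ann(f)$; via the Gorenstein pairing on $A$ these constraints become a linear system whose coefficients are read off from the catalecticants of $f$. The assertion to prove becomes: for general $f$ this system has only the translation solutions in degree $-1$ and none in degree $\le -2$.

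Establishing this genericity statement is the main obstacle, because the apolar ideal of a general cubic need not behave like a generic ideal — its minimal free resolution, hence its linear syzygies, can be anomalous, and this is exactly what makes the answer depend on $n$. I would argue by upper semicontinuity of $\dim_\bk T_{\le-1}$ as $f$ varies over $\mathbb P(P_3)$: it then suffices to exhibit one cubic $f_0$ in $n$ variables with $\dim_\bk(T_{\le-1})_{f_0}=n$. For $f_0$ one takes a combinatorially transparent cubic — for instance a sufficiently generic sum of cubes, or a near-monomial cubic — for which $\Ann(f_0)$, its quadric generators, its linear syzygies and the catalecticants are explicit, and checks the vanishing by a direct monomial computation. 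This last computation is where the arithmetic of $n$ enters: for $n\le 5$ and for $n=7$ the chosen $f_0$, and hence the general cubic, carries homomorphisms of negative degree beyond the translations — equivalently $T^1_A$ is not concentrated in nonnegative degrees — so the reduction of the first step fails, which accounts for the exclusions in the statement (and, for small $n$, is consistent with smoothability there); for $n=6$ and all $n\ge 8$ the count comes out to exactly $n$.

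Finally I would assemble the pieces: with $T_{\le-2}=0$ and $T_{-1}$ equal to the translation space, the reduction of the first step yields that $[\Ann(f)]$ is a smooth point of an elementary component $\mathcal Z$ of $\mathrm{Hilb}^{2n+2}(\mathbb A^n)$, with $\dim\mathcal Z=\dim_\bk T$. An elementary component is never the smoothable component (its general member has one-point support, whereas $2n+2\ge 2$ general points are distinct), so $\mathcal Z$ is new and $\Ann(f)$ is nonsmoothable. Running this for every $n\ge 6$ with $n\ne 7$ produces the advertised infinite family of elementary components in the Gorenstein locus and settles the cubic case of Iarrobino's conjecture.
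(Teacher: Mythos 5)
Your reduction is correct and is essentially the route the paper takes: the requirement \(T_{\le -2}=0\) and \(\dim_\bk T_{-1}=n\) is precisely the small tangent space condition (Proposition~\ref{prop_tan_I} identifies it with \(H(S/I^2)_4=n\) and \(H(S/I^2)_5=0\) for \(I=\Ann(f)\)); elementarity of every component through \(\Ann(f)\) follows from Jelisiejew's criterion via \(\dim_\bk\Hom_S(I,S/I)_{<0}=n\) and smoothness from the Iarrobino--Kanev dimension count (Proposition~\ref{prop_elem_small}); and semicontinuity (Proposition~\ref{prop_general}) reduces everything to exhibiting, for each admissible \(n\), a single cubic \(f_0\) satisfying the condition. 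Your detour through \(T^1\) and the derivation sequence is an equivalent repackaging of the same linear algebra, not a different method.

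The genuine gap is that you stop exactly where the content of the theorem begins. By Proposition~\ref{prop_low_bound} one always has \(H(S/I^2)_4\ge n\), so the condition demands an extremal coincidence, and producing a verifiable witness is the whole difficulty: the paper spends all of Section~\ref{chp_theorem} proving by hand that three explicit families of monomial-plus-binomial cubics work for \(n\ge 18\), and resorts to machine verification for \(n=6\) and \(8\le n<18\). The candidates you float would not do. For a plain sum of cubes \(\sum x_i^3\) the apolar ideal is generated in degree \(2\) by the \(\binom{n}{2}\) quadrics \(\alpha_i\alpha_j\), \(i\ne j\), and since \((I^2)_4=(I_2)^2\) contains no monomial of the form \(\alpha_i^4\) or \(\alpha_i^3\alpha_j\), one gets \(H(S/I^2)_4\ge n^2\), so the condition fails badly; ``a sufficiently generic sum of cubes'' is circular, since a general cubic is such a sum; and ``a near-monomial cubic checked by a direct monomial computation'' is a promissory note rather than a proof, especially as the verification must be delicate enough to explain why \(n\le 5\) and \(n=7\) genuinely fail. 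Until a concrete \(f_0\) is written down and the two Hilbert-function equalities \(H(S/\Ann(f_0)^2)_4=n\), \(H(S/\Ann(f_0)^2)_5=0\) are actually established for every residue class of \(n\), the argument is incomplete.
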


This is a corollary of the following theorem. The \emph{apolar algebra} \(S/\Ann(f)\), denoted \(\Apolar(f)\), with Hilbert function \((1,n,n,1)\) is said to satisfy the small tangent space condition if the \(\bk\)-algebra \(S/\Ann(f)^2\) has the smallest Hilbert function possible.

\begin{thm} \label{thm}
If \(n\geq 6\) (except for \(n=7\)), then for a general \(f\in P\) homogeneous of degree \(3\) the apolar algebra \(\Apolar(f)\) satisfies the small tangent space condition.
\end{thm}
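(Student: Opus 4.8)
The plan is to rephrase the statement as a computation of finitely many values of the Hilbert function of $S/\Ann(f)^2$, to pin down the a priori minimum of each, and to prove that this minimum is attained for general $f$ by semicontinuity, starting from one explicit cubic. The reason $S/\Ann(f)^2$ is the right object is Macaulay duality: $\Apolar(f)\cong\Apolar(f)^{\vee}(-3)$ as graded $S$-modules, so $\Hom_S(\Ann(f),\Apolar(f))\cong\bigl(\Ann(f)/\Ann(f)^2\bigr)^{\vee}(-3)$ and $\dim_\bk\Hom_S(\Ann(f),\Apolar(f))=\dim_\bk S/\Ann(f)^2-(2n+2)$, so ``smallest Hilbert function of $S/\Ann(f)^2$'' is literally ``smallest tangent space to the Hilbert scheme at $[\Ann(f)]$''.

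Write $I:=\Ann(f)$. Since $f$ is general of degree $3$, $\Apolar(f)$ is compressed with Hilbert function $(1,n,n,1)$, so $I_0=I_1=0$, $\dim_\bk I_2=\binom n2$, $\dim_\bk I_3=\binom{n+2}3-1$, $I_j=S_j$ for $j\ge 4$, and (a routine dimension count) $I$ is generated by quadrics. Hence $I^2$ vanishes in degrees $\le 3$ and contains $S_j$ for $j\ge 8$, while $(I^2)_j=S_{j-4}\cdot I_2^2$ for $j=4,5,6,7$, so the whole Hilbert function of $S/I^2$ is governed by the multiplication maps $\Sym^2 I_2\to S_4$ and $S_{j-4}\otimes I_2^2\to S_j$ for $j=5,6,7$. ``Smallest possible'' means each of these has the largest rank not excluded by the Koszul relations among generators of $I_2^2$; call the resulting target values $b_4,\dots,b_7$. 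The inequality $\dim_\bk(S/J^2)_j\ge b_j$ for every graded Artinian Gorenstein $S/J$ with Hilbert function $(1,n,n,1)$ is the easy numerical direction, so the content of the theorem is the reverse inequality for general $f$.

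For that I would invoke semicontinuity. On the dense open $U\subseteq P_3$ of cubics with Hilbert function $(1,n,n,1)$ the assignment $f\mapsto I_2=\Ann(f)_2$ is a morphism to a Grassmannian (the kernel of a matrix of constant rank), so $I_2^2$ and the $S_{j-4}\cdot I_2^2$ are fibres of morphisms of vector bundles over the irreducible variety $U$; their ranks are lower semicontinuous, hence $f\mapsto\dim_\bk(S/\Ann(f)^2)_j$ is upper semicontinuous on $U$ and attains its minimum over $U$ on a dense open subset, that is, for general $f$. Combined with the bound $\ge b$, it therefore suffices to exhibit, for each $n\ge 6$ with $n\ne 7$, one cubic $f_0\in U$ for which those four multiplication maps have maximal possible rank.

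Producing such an $f_0$ and verifying the rank conditions is the crux and, I expect, the main obstacle. A fully symmetric choice like $\sum x_i^3$ is useless — then $I_2$ is spanned by the squarefree quadratic monomials, whose products never reach a monomial with a variable of multiplicity $\ge 3$, so the ranks are far from maximal — so $f_0$ must be generic enough to behave well yet structured enough to handle. The natural route is an induction on $n$: pass from a verified $f_0$ in $n$ variables to $f_0$ plus a small cubic in a new variable $x_{n+1}$, decompose $S_j=\bigoplus_t x_{n+1}^t S^{(n)}_{j-t}$ along with the matching piece of $I^2$, and reduce the inductive step to rank statements for multiplication by $x_{n+1}$-homogeneous components, which follow from the $n$-variable case together with explicit rank computations in the bounded range of relevant degrees. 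The value $n=7$ has to be excluded because there the maximal rank over $U$ of one of these maps falls strictly below $b_j$ (an unexpected quadratic syzygy among the products $q_iq_j$, of Fr\"oberg type), so no $f_0$ exists and the small tangent space condition genuinely fails; the induction is accordingly set up to bypass $n=7$, with $n=6$ and a short list of values $n\ge 8$ handled as base cases.
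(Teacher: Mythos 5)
Your reduction to exhibiting a single example is correct and matches the paper's strategy: the identification $T_{\Ann(f)}\hilb\cong(\Ann(f)/\Ann(f)^2)^{\vee}[-3]$ (Proposition \ref{prop_hilb_I}), the observation that the minimum of $H(S/\Ann(f)^2)_4$ is $n$ (Proposition \ref{prop_low_bound}), and the semicontinuity argument over the open locus $U$ of cubics with Hilbert function $(1,n,n,1)$ (Propositions \ref{prop_gen_hilbert_fun} and \ref{prop_general}) are all present in the paper, essentially as you describe them. Two small imprecisions worth flagging: the small tangent space condition only concerns degrees $4$ and $5$ of $S/I^2$ (once $(S/I^2)_5=0$, degrees $\geq 6$ vanish automatically, so your maps in degrees $6,7$ play no role); and your assertion that $I$ is generated by quadrics for general $f$ is not needed for the argument and is not entirely trivial to establish.

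The genuine gap is the construction of $f_0$, which you yourself identify as the crux but do not carry out. The proposed induction --- adjoin a new variable $x_{n+1}$ and add a ``small cubic'' in it --- is not the paper's route and has no obvious mechanism behind it: the small tangent space condition is a maximal-rank statement about quadratic products of $I_2$, and there is no reason why passing from a verified $n$-variable cubic to an $(n+1)$-variable one by a local modification should preserve or propagate it. Indeed the known failures at $n\in\{1,\dots,5,7\}$ show the condition is delicate and non-monotone in $n$, which is exactly the kind of behavior that breaks naive inductions. The paper instead writes down three explicit closed-form cyclic cubics, one per residue of $n$ modulo $3$, for instance
\[
\Fa=\sum_{i=1}^m a_ib_ic_i + a_ia_{i+1}^2 + b_ib_{i+1}^2 + c_ic_{i+1}^2
\]
when $n=3m$ (with variants appending $a_ib_{i+1}d$ and $b_ic_{i+1}e$ for $n=3m+1,3m+2$), and then verifies by a lengthy hands-on degree-by-degree analysis that every degree-$4$ monomial lies in an auxiliary ideal $\Ja\supseteq\Ia^2$ with $H(\Sa/\Ja)_4=n$ and that $(\Sa/\Ia^2)_5=0$; this works uniformly for $n\geq 18$ over any field. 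The remaining cases $n=6$ and $8\leq n\leq 17$ are settled by direct computer verification, not by serving as base cases of an induction. Your sketch of why $n=7$ fails (an unexpected syzygy among the quadratic generators) is plausible and consistent with the references the paper cites, but you would still owe a proof of it. As written, the proposal is a correct strategy outline with the decisive construction replaced by a conjecture.
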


Loosely speaking, Theorem \ref{thm} asserts that \(\Apolar(f)\) has only trivial deformations of second order. For why it is false when \(n\leq 5\) or \(n=7\) see \cite{less_five} and \cite{seven}.

The characteristic \(0\) assumption can be removed for \(n\geq 18\), see Theorem \ref{thm_main}. We believe this to be true for all \(n\), but small \(n\) would probably require a direct verification. We make this verification on computer for characteristics 0,2, and 3. For \(n\) less than 13 this was also done by Iarrobino and Kanev \cite[Lemma 6.21]{iarrobino}.

Summing up, we show Theorem \ref{thm} to hold for all characteristics when \(n\geq 18\) and for characteristics 0,2, and 3 in general. This resolves the following conjecture, posed by Iarrobino and Kanev, in the case \(d=3\).

\begin{conjecture}[\cite{iarrobino}, Conjecture 6.30] \label{conj}
Let \(d\) be an odd integer. If one of the following conditions holds
\begin{enumerate}
\item \(n=4\) and \(d\geq 15\),
\item \(n= 5\) and \(d\geq 5\),
\item \(n\geq 6\) and \(d\geq 3\) (except for \((n,d)=(7,3)\)),
\end{enumerate}
then for a general \(f\in P\) homogeneous of degree \(d\) the apolar algebra \(\Apolar(f)\) satisfies the small tangent space condition.
\end{conjecture}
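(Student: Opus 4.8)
The plan is to translate the small tangent space condition into a statement about $\dim_\bk S/\Ann(f)^2$, reduce it by semicontinuity to producing one good form for each pair $(n,d)$, and then collapse the doubly-infinite family to a short list of base cases by a degree-raising induction. Write $I=\Ann(f)$ and $A=\Apolar(f)=S/I$. Since $d$ is odd the generic Hilbert function is symmetric, $h_i=\min(\dim_\bk S_i,\dim_\bk S_{d-i})$, so for general $f$ the integer $\dim_\bk A$ is fixed, $I$ vanishes in degrees $\le(d-1)/2$, and hence $I^2$ vanishes in degrees $\le d$ while $S/I^2$ coincides with $S$ there. The Zariski tangent space at $[A]$ is $\Hom_S(I,A)=\Hom_A(I/I^2,A)$, which by the Gorenstein self-duality of $A$ is $\bk$-dual up to a grading shift to $I/I^2$; thus $\dim_\bk\Hom_S(I,A)=\dim_\bk S/I^2-\dim_\bk A$. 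So the small tangent space condition is exactly the assertion that $\sum_{j>d}\dim_\bk(S_j/(I^2)_j)$ attains its least possible value $t(n,d)$, the minimum compatible with the structural constraints on $S/I^2$ — the vanishing of $I^2$ in degrees $\le d$ together with the lower bounds on the length of $S/I^2$ imposed by the trivial, vector-field-induced deformations of the subscheme. I would first record the closed formula for $t(n,d)$ in terms of the generic Hilbert function.

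Next, a semicontinuity reduction. The locus $U\subseteq P_d$ of forms with generic Hilbert function is open and dense; on $U$ the ideals $\Ann(f)$ form a flat family, the ranks of the multiplication maps $\Ann(f)_a\otimes\Ann(f)_b\to S_{a+b}$ are lower semicontinuous, and so $\dim_\bk S/\Ann(f)^2$ is upper semicontinuous on $U$ and attains its minimum on a dense open subset. Hence, for a fixed $(n,d)$, it suffices to exhibit a single $f\in U$ with $\dim_\bk S/\Ann(f)^2=\dim_\bk A+t(n,d)$, and then the general form satisfies the condition as well.

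The inductive engine is a degree-raising lemma: if a general form of odd degree $d$ in $n$ variables satisfies the small tangent space condition, so does a general form of degree $d+2$ in $n$ variables. Starting from a good $f\in\bk[x_1,\dots,x_n]_d$, one builds a degree-$(d+2)$ form $\widetilde f$ — for instance a degeneration of a general $g$ of degree $d+2$ towards a shape such as $\ell^2 f$ plus lower-order corrections along a fixed coordinate $\ell$ — for which $\Ann(\widetilde f)$ and $\Ann(\widetilde f)^2$ are controlled degree by degree by those of $f$; together with the semicontinuity step this forces the general $g$ of degree $d+2$ to be good. (Should the corresponding propagation statement already be available in \cite{iarrobino}, this lemma may be quoted rather than reproved.) Iterating reduces each of the three families of the conjecture to its minimal member: $d=3$ for $n\ge 6$, $n\ne 7$, which is Theorem \ref{thm}, together with the finitely many remaining cases $(n,d)=(5,5)$, $(7,5)$, and $(4,15)$. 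Each of these last would be settled by an explicit form and a direct computation of $\dim_\bk S/\Ann(f)^2$ — by hand using the genericity of the chosen $f$, or otherwise on a computer over $\mathbb{Q}$, which suffices by the semicontinuity step under the characteristic-$0$ hypothesis.

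The crux is the degree-raising lemma. Squaring an ideal behaves badly both under deformation and under the natural comparison maps between apolar algebras of different socle degrees, so controlling $\Ann(\widetilde f)^2$ from $\Ann(f)^2$ demands a very careful choice of $\widetilde f$ and a tight accounting of the ranks of all the bilinear maps $I_a\otimes I_b\to S_{a+b}$ along the chosen degeneration; obtaining a clean, characteristic-free version of the lemma — rather than one that, like Theorem \ref{thm_main}, needs $n$ large — is the delicate point, and is presumably why the present paper resolves only the cubic case.
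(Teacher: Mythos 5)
The statement you are asked about is a \emph{conjecture} (Iarrobino--Kanev, Conjecture 6.30), and the paper does not prove it: it explicitly states that for $d>3$ essentially nothing is known, and its contribution is the case $d=3$ only (Theorem \ref{thm}, via Theorem \ref{thm_main} for $n\ge 18$ and computer checks for smaller $n$). Your setup is sound as far as it goes --- the reformulation of the tangent space as $(I/I^2)^{\vee}$ up to a shift via Gorenstein duality, the vanishing of $I^2$ in degrees $\le d$ for generic $f$ of odd degree, and the semicontinuity reduction to exhibiting a single good form per $(n,d)$ all match the paper's Propositions \ref{prop_hilb_I}, \ref{prop_tan_I} and \ref{prop_general} (specialized there to $d=3$). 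But the entire weight of your argument rests on the degree-raising lemma ``general good in degree $d$ implies general good in degree $d+2$,'' and you do not prove it; you only sketch a hoped-for degeneration of the form $\ell^2 f$ plus corrections and then concede that controlling $\Ann(\widetilde f)^2$ along such a degeneration is ``the delicate point.'' No such propagation statement is available in \cite{iarrobino} or elsewhere, and squaring an ideal is not semicontinuous in the direction you need (specializing $g$ to $\widetilde f$ can only \emph{increase} $\dim_\bk S/\Ann(\cdot)^2$ at the special fibre relative to the generic fibre, which is the wrong inequality unless you actually verify the special fibre achieves the minimum --- which is the whole problem again, now for a non-generic form). So the proposal is not a proof; it is a reduction of an open conjecture to an unproven lemma that contains all of its difficulty.

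A secondary gap: for $d>3$ you never pin down the target value $t(n,d)$, i.e.\ what ``smallest Hilbert function possible'' for $S/I^2$ means beyond the cubic case. The paper only defines the small tangent space condition for $d=3$ (namely $H(S/I^2)_4=n$ and $H(S/I^2)_5=0$, with the lower bound $H(S/I^2)_4\ge n$ coming from Proposition \ref{prop_low_bound}); for general odd $d$ the analogous lower bounds in degrees $d+1,\dots,2d-1$ need to be stated and justified before the condition you are trying to verify is even well defined. If you want a result that is actually within reach of the paper's methods, the correct claim to prove is the $d=3$ case, which is done there by writing down explicit binomial-plus-monomial forms (e.g.\ $F=\sum_i a_ib_ic_i+a_ia_{i+1}^2+b_ib_{i+1}^2+c_ic_{i+1}^2$ for $n=3m$) and checking by hand that every degree-$4$ monomial lies in $I^2$ plus an $n$-dimensional space and that $(S/I^2)_5=0$.
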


For \(d>3\) essentially nothing is known.

In order to prove Theorem \ref{thm} it suffices to give, for every \(n\), a single example of a polynomial whose apolar algebra satisfies the small tangent space condition. In our proof, we give three rather simple ones covering all \(n\) greater than 8. For \(n\) divisible by 3 we take the following polynomial
\[F=\sum_{i=1}^m a_ib_ic_i + a_ia_{i+1}^2 + b_ib_{i+1}^2 + c_ic_{i+1}^2.\]
As a consequence, we provide an explicit description of a smooth point of an elementary component of the Hilbert scheme. Since the apolar ideals associated to our examples admit a set of generators consisting only of monomials and binomials they are also convenient from a computational point of view. Moreover, for \(n\geq 18\), our proof does not use any computer computations. This is important in complexity theory, where structure tensors of such algebras correspond to 1-generic tensors \cite[Section 5.6.1]{landsberg}.

We begin, in chapter \ref{chap_prelim}, by giving all necessary background such as contraction, apolar algebras, and Gorenstein rings. It is also there where we compute the tangent space to the Hilbert scheme and present equivalent descriptions of the small tangent space condition. Then, in chapter \ref{chp_theorem}, we prove Theorem \ref{thm} for sufficiently large \(n\). Small \(n\) are taken care of in chapter \ref{chap_computer} where we verify them on a computer.

\section{Preliminaries} \label{chap_prelim}

This chapter introduces all notions related to our study. In section \ref{sec_hilbert}, we define the Hilbert scheme and describe its tangent space. In section \ref{sec_contraction}, we introduce apolar algebras and divided power rings associated to polynomial rings. In section \ref{sec_gorenstein}, we introduce the dualizing functor \((-)^{\vee}\) and define zero-dimensional Gorenstein local rings. In section \ref{sec_small_tan}, we define the small tangent space condition and relate it to the tangent space of the Hilbert scheme. Finally, in section \ref{sec_elementary}, we give a link between the small tangent space condition and smooth points on elementary components of the Hilbert scheme.

\subsection{Hilbert scheme} \label{sec_hilbert}
\hfill

In this section, we introduce the notion of deformation. The deformation functor turns out to be representable by a scheme, called the Hilbert scheme of points.

Let \(\bk\) be a field. Given two \(\bk\)-algebras \(S\) and \(A\), we write \(S_A\) for the ring \(S\otimes_{\bk} A\) treated as an \(A\)-algebra.

\begin{definition}
Let \(S\) be a fixed, finitely generated \(\bk\)-algebra. The \emph{embedded deformation functor} \(\Defemb\colon\Alg\to \Set\) assigns to a \(\bk\)-algebra \(A\) the set of isomorphism classes of ideals \(I\ideal S_A\) such that \(S_A/ I\) is a locally free \(A\)-module of finite rank. To a morphism \(A\to B\) of \(\bk\)-algebras the functor \(\Defemb\) assigns the function taking \(I\in \Defemb(A)\) to \(I S_B\in \Defemb(B)\).
\end{definition}

We consider the following theorem as the definition of the Hilbert scheme.

\begin{theorem}[\cite{haiman}, Theorem 1.1]
Let \(S\) be a fixed, finitely generated \(\bk\)-algebra. Then, there exists a finite type \(\bk\)-scheme \(\hilb\), called the Hilbert scheme of points, representing the deformation functor \(\Defemb\) in the sense that there is an isomorphism of sets \(\Defemb(A)\cong\Mor_{\Sch}(\Spec A,\hilb)\) natural in \(A\).
\end{theorem}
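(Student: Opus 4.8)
The plan is to present $\Defemb$ as a locally closed subfunctor of a Grassmann functor and invoke the fact that a (locally) closed subfunctor of a representable functor is representable, by a (locally) closed subscheme. As a first reduction, set $R:=\bk[x_1,\dots,x_m]$ and write $\Sa=R/\Ja$; an ideal $\Ia\ideal\Sa_A$ is then the same datum as an ideal of $R_A$ containing $\Ja_A$, with the same quotient, and the constraint ``$\supseteq\Ja_A$'' defines a closed subfunctor (the zero locus of $\Ja_A\to R_A/\Ia$), so it suffices to treat $R$ itself. Since the rank of $\Sa_A/\Ia$ is a locally constant function on $\Spec A$, we also have $\Defemb=\coprod_{d\ge 0}\Defemb^d$ with $\Defemb^d$ the subfunctor on which this rank equals $d$; it then suffices to represent each $\Defemb^d$ by a finite-type scheme $\hilb^d$ and set $\hilb=\coprod_d\hilb^d$.

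The core step, which I expect to be the main obstacle, is a uniform degree bound. Let $V_{\le D}\subseteq R$ be the span of the monomials of degree at most $D$. One needs an integer $D=D(d,m)$, \emph{independent of} $A$, such that for every $\bk$-algebra $A$ and every $\Ia\in\Defemb^d(A)$: (i) the composite $(V_{\le D})_A\hookrightarrow R_A\to R_A/\Ia$ is surjective with kernel $K:=\Ia\cap(V_{\le D})_A$ a locally free direct summand of $(V_{\le D})_A$ of rank $r:=\dim_\bk V_{\le D}-d$; and (ii) $\Ia=\langle K\rangle$, while conversely $\langle K\rangle\cap(V_{\le D})_A=K$. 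For the Hilbert scheme of $d$ points one may take $D=d$: over a field the truncated Hilbert function $\dim\bigl((V_{\le j}+\Ia)/\Ia\bigr)$ strictly increases until it reaches $d$, so monomials of degree below $d$ span the quotient, after which a short argument shows $\Ia$ is generated in degrees $\le d$; the statements over a general base follow from the field case via Nakayama and the local criterion for flatness, using that $R_A/\Ia$ is locally free (and, if one wishes, a direct-limit argument reduces to $A$ noetherian). Granting this, $\Ia\mapsto\Ia\cap(V_{\le D})_A$ defines a natural transformation $\Defemb^d\to\underline{\operatorname{Gr}}(r,V_{\le D})$ to the Grassmann functor — represented by a projective, in particular finite type, scheme — and by (ii) this transformation is a monomorphism.

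It remains to identify the image as a locally closed subfunctor, which is the Haiman--Sturmfels analysis of the defining equations of the multigraded Hilbert scheme specialized to the trivial grading \cite{haiman}: a rank-$r$ subbundle $K\subseteq(V_{\le D})_A$ lies in the image precisely when (a) multiplication by each $x_i$ carries $K\cap(V_{\le D-1})_A$ into $K$ --- a closed condition, being the vanishing of a morphism of bundles --- and (b) on the closed locus thereby cut out, the ideal $\langle K\rangle$ has quotient flat of rank $d$, an open condition. Hence $\Defemb^d$ is represented by a locally closed subscheme $\hilb^d$ of a Grassmannian, in particular by a quasi-projective, hence finite-type, $\bk$-scheme. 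By the very definition of representability this yields the asserted bijection $\Defemb^d(A)\cong\Mor_{\Sch}(\Spec A,\hilb^d)$ natural in $A$; reassembling over $d$ gives the Hilbert scheme $\hilb=\coprod_d\hilb^d$, which represents $\Defemb$.
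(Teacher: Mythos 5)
The paper itself contains no proof of this statement: it is quoted from Haiman--Sturmfels \cite{haiman} and explicitly ``considered as the definition'' of the Hilbert scheme, so there is nothing internal to compare your argument with. Judged against the cited source, your sketch follows essentially the same route (reduce to a polynomial ring $R$, stratify by the rank $d$, bound generating degrees, embed $\Defemb^d$ into a Grassmann functor of $V_{\le D}$ with $D=d$, and cut out the image by closed plus open conditions), and as an outline it is sound: the reduction from $\Sa=R/\Ja$ to $R$ via the closed condition $\Ja_A\subseteq\Ia$ is fine, and the degree bound $D=d$ with the Nakayama/flatness descent from the field case is the standard argument. Two points deserve more care. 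First, as you implicitly acknowledge, $K\cap(V_{\le D-1})_A$ need not be a subbundle of $(V_{\le D})_A$, so condition (a) is not literally ``the vanishing of a morphism of bundles''; in Haiman--Sturmfels the closed conditions are instead expressed through rank (determinantal/Fitting-ideal) inequalities on maps built from the universal quotient, and the locus where the quotient has rank exactly $d$ is then locally closed --- this reformulation is the actual technical content you are outsourcing. Second, since the rank is not fixed in $\Defemb$, your $\hilb=\coprod_{d\ge 0}\hilb^d$ is only locally of finite type; finite type holds for each fixed length $d$ (which is the form in which Haiman--Sturmfels state quasiprojectivity), so the ``finite type'' in the statement as quoted should be read for fixed $d$ rather than for the full disjoint union.
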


Note that \(\Defemb(\bk)\), and hence \(\hilb(\bk)\), is the set of ideals \(I\ideal S\) such that \(\dim_{\bk}S/I\) is finite. Since \(S/I\) is Noetherian \(\dim_{\bk}S/I\) being finite is equivalent to \(S/I\) being zero-dimensional.

\begin{theorem}[\cite{stromme}, Theorem 10.1] \label{thm_tangent}
Let \(S\) be a finitely generated \(\bk\)-algebra, and let \(\hilb\) be its associated Hilbert scheme. For an ideal \(I\triangleleft S\) such that \(\dim_{\bk}S/I\) is finite, hence for a rational point of \(\hilb\), the tangent space of \(\hilb\) at \(I\) is isomorphic to \(\Hom_S(I, S/I)\).
\end{theorem}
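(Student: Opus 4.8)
The plan is to use the functorial description of the Hilbert scheme: since $\hilb$ represents $\Defemb$, the tangent space at a point $I \in \Defemb(\bk) = \hilb(\bk)$ is, by definition, $\Defemb(\bk[\varepsilon])$ where $\bk[\varepsilon] = \bk[\varepsilon]/(\varepsilon^2)$ is the ring of dual numbers, with the distinguished basepoint given by the image of $I$ under the map $\Defemb(\bk[\varepsilon]) \to \Defemb(\bk)$ induced by $\varepsilon \mapsto 0$. So the entire task reduces to a direct computation: classify the ideals $\tilde I \ideal S_{\bk[\varepsilon]} = S[\varepsilon]$ such that $S[\varepsilon]/\tilde I$ is free (equivalently flat, since we are over an Artinian base) over $\bk[\varepsilon]$ and such that $\tilde I$ reduces to $I$ modulo $\varepsilon$. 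I would then exhibit a natural bijection between this set and $\Hom_S(I, S/I)$.

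First I would set up the correspondence. Given a flat deformation $\tilde I$ lying over $I$, flatness of $S[\varepsilon]/\tilde I$ over $\bk[\varepsilon]$ is equivalent (by the local criterion for flatness over dual numbers) to the exactness of $0 \to (S/I) \cdot \varepsilon \to S[\varepsilon]/\tilde I \to S/I \to 0$, i.e. to the condition $\tilde I \cap \varepsilon S[\varepsilon] = \varepsilon I$. Under this condition, for every $g \in I$ one can find a lift $g + \varepsilon h_g \in \tilde I$ with $h_g \in S$, and $h_g$ is well-defined modulo $I$; moreover flatness forces every element of $\tilde I$ to be of the form $a + \varepsilon b$ with $a \in I$, so $\tilde I$ is determined by the assignment $g \mapsto \bar h_g \in S/I$. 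The key point is to check that $g \mapsto \bar h_g$ is a well-defined $S$-module homomorphism $I \to S/I$: additivity is clear, and $S$-linearity follows because if $g + \varepsilon h_g \in \tilde I$ and $s \in S$ then $s(g + \varepsilon h_g) = sg + \varepsilon s h_g \in \tilde I$, forcing $\overline{h_{sg}} = s \bar h_g$. Conversely, given $\varphi \in \Hom_S(I, S/I)$, pick any set-theoretic lift $\hat\varphi \colon I \to S$ and define $\tilde I$ to be the $S[\varepsilon]$-submodule generated by $\{\, g + \varepsilon\,\hat\varphi(g) : g \in I \,\}$ together with $\varepsilon I$; one checks this is an ideal, is independent of the choice of $\hat\varphi$ and of generators of $I$, satisfies $\tilde I \cap \varepsilon S[\varepsilon] = \varepsilon I$ (hence is flat), and reduces to $I$. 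These two constructions are mutually inverse, giving the bijection; that the basepoint $I \otimes \bk[\varepsilon]$ corresponds to $\varphi = 0$ shows the bijection is one of pointed sets, hence of $\bk$-vector spaces once one verifies it is linear (which is immediate from the explicit formulas: scaling $\varphi$ by $\lambda$ scales the $\varepsilon$-part by $\lambda$, and adding $\varphi_1, \varphi_2$ corresponds to a "Baer sum" of deformations that unwinds to ordinary addition in $S/I$).

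The main obstacle — really the only subtle point — is verifying that flatness over $\bk[\varepsilon]$ is exactly captured by the numerical/ideal-theoretic condition $\tilde I \cap \varepsilon S[\varepsilon] = \varepsilon I$, and using this to pin down the precise shape of a general element of $\tilde I$; once that structural description is in hand, the assignment $\tilde I \mapsto (g \mapsto \bar h_g)$ and its inverse are essentially formal. One should also be a little careful that the construction respects isomorphism classes of ideals (the objects of $\Defemb$), but since we are working with honest ideals of $S[\varepsilon]$ rather than abstract quotients, "isomorphism class" just means the ideal itself, so there is nothing extra to check. Finally, naturality of the bijection $\Defemb(A) \cong \Mor_{\Sch}(\Spec A, \hilb)$ in $A$ — which is part of the representability statement we are allowed to assume — guarantees that the $\bk$-vector space structure on $\Defemb(\bk[\varepsilon])$ coming from the dual-numbers formalism agrees with the Zariski tangent space structure on $\hilb$ at $I$, completing the identification $T_I \hilb \cong \Hom_S(I, S/I)$.
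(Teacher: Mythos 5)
The paper does not prove this statement; it is cited verbatim from Strømme's lecture notes (\cite{stromme}, Theorem 10.1) and used as a black box, so there is no in-paper argument to compare yours against.

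That said, your argument is correct and is the standard textbook proof (it appears in essentially this form in Hartshorne's \emph{Deformation Theory} and Sernesi's \emph{Deformations of Algebraic Schemes}). You correctly reduce to computing $\Defemb(\bk[\varepsilon])$ over the basepoint $I$ via representability, correctly identify flatness over $\bk[\varepsilon]$ with the condition $\tilde I \cap \varepsilon S[\varepsilon] = \varepsilon I$, and the two constructions you sketch are genuinely mutually inverse. Two very minor points you could tighten if this were to be written out in full. First, well-definedness of $g \mapsto \bar h_g$ is exactly where flatness enters: if $g + \varepsilon h$ and $g + \varepsilon h'$ both lie in $\tilde I$ then $\varepsilon(h-h') \in \tilde I \cap \varepsilon S[\varepsilon] = \varepsilon I$, so $h - h' \in I$; it is worth stating this explicitly since it is the crux. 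Second, in the inverse construction it is cleanest to note that an arbitrary $S[\varepsilon]$-combination of the generators $\{g + \varepsilon\hat\varphi(g)\} \cup \varepsilon I$ unwinds to exactly the set $\{\,g + \varepsilon h : g\in I,\ h \equiv \hat\varphi(g) \bmod I\,\}$, from which independence of $\hat\varphi$, the ideal property, the flatness condition $\tilde I\cap\varepsilon S[\varepsilon]=\varepsilon I$, and the reduction to $I$ all drop out in one line. Your appeal to naturality of $\Defemb(A)\cong\Mor_{\Sch}(\Spec A,\hilb)$ to transfer the linear structure from the dual-numbers side to the Zariski tangent space is the right thing to say; the ``Baer sum'' remark about additivity is informal but points at the correct mechanism (addition of tangent vectors corresponds to pulling back along the coproduct $\bk[\varepsilon]\to\bk[\varepsilon]\times_\bk\bk[\varepsilon]$, and unwinding this gives addition of the $\varepsilon$-parts in $S/I$).
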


\subsection{Apolar algebras} \label{sec_contraction}
\hfill

In this section, following \cite{jelisiejew}, we introduce the notion of apolar algebra. This is the easiest to construct and, in the case of zero-dimensional, graded local rings, the only example of a Gorenstein ring (see Theorem \ref{thm_gor_class}).

Consider a polynomial ring \(S=\bk[\alpha_1,..,\alpha_n]\) over a field \(\bk\). Recall that \(S\) is a graded \(\bk\)-algebra with the ideal \(S_+\) equal to \((\alpha_1,...,\alpha_n)\). We denote by \(S^{\vee}\) the \(S\)-module \(\Hom_{\bk}(S,\bk)\) of \(\bk\)-linear functionals on \(S\). Let \(\langle-,-\rangle\colon S\times S^{\vee}\to \bk\) be the natural map given by evaluation.

\begin{definition}
Let \(P\) be the submodule \(\{f\in S^{\vee} \colon \forall_{N\gg 0}\,\langle (S_+)^N,f\rangle=0\}\) of \(S^{\vee}\). The induced action of \(S\) on \(P\) is called \emph{contraction}.
\end{definition}

We now give a more concrete description of contraction. If \(\ma=(a_1,a_2,...,a_n)\) is a multi-index, we write \(\alpha^{\ma}\) for the monomial \(\alpha_1^{a_1}\alpha_2^{a_2}...\,\alpha_n^{a_n}\in S\). For every multi-index \(\ma\) there is a unique functional \(\mx^{[\ma]}\in P\) dual to \(\alpha^{\ma}\) in the sense that for all multi-indices \(\mb\) we have
\[\langle \alpha^{\mb}, \mx^{[\ma]}\rangle=\begin{cases} 1 & \text{if}\ \ma = \mb \\ 0 & \text{otherwise}. \end{cases}\]

Note that \(\mx^{[\cdot]}\) form a \(\bk\)-basis of \(P\). The quantity \(\sum \ma:\equiv\sum a_i\) is called the \emph{degree} of \(\mx^{[\ma]}\). An element \(f\in P\) is called \emph{homogeneous} of degree \(d\) if \(f\) is contained in \(\text{span}_{\bk}(\mx^{[\ma]}\colon\sum\ma=d)\). Contraction behaves on the basis as follows
\[\alpha^{\mb}\cdot\mx^{[\ma]} = \begin{cases} \mx^{[\ma-\mb]} & \text{if}\ \ma \geq \mb \ (\forall_i\, a_i\geq b_i) \\ 0 & \text{otherwise}. \end{cases}\]

Though we do not need this, we can equip \(P\) with a ring structure. Multiplication on \(P\) is given by the formula
\[\mx^{[\ma]} \mx^{[\mb]} = \binom{\ma+\mb}{\ma} \mx^{[\ma+\mb]}\]
where \(\binom{\ma+\mb}{\ma}=\prod \binom{a_i+b_i}{a_i}\). In this way, \(P\) is a \emph{divided power ring}.

\begin{definition}
Let \(f\in P\), and let \(\Ann(f)\) denote the ideal \(\{s\in S \colon s\cdot f = 0\}\) of \(S\). The \(\bk\)-algebra \(S/\Ann(f)\) is called the \emph{apolar algebra} of \(f\), and is denoted \(\Apolar(f)\).
\end{definition}

\subsection{Zero-dimensional Gorenstein local rings} \label{sec_gorenstein}
\hfill

Throughout this section let \((A,\m,\bk)\) be a zero-dimensional, finitely generated local \(k\)-algebra. We denote by \(\Amod\) the category of finitely generated modules over \(A\).

We recall basic definitions and properties concerning zero-dimensional Gorenstein rings following \cite[Chapter 21]{eisenbud}.

\begin{definition}
A functor \(E:\Amod^{\op}\to\Amod\) is called \emph{dualizing} if \(E^2\cong 1\).
\end{definition}

\begin{proposition}[\cite{eisenbud}, Proposition 21.1] \label{prop_dualizing}
If \(E\colon \Amod^{\op}\to \Amod\) is dualizing, then there is an isomorphism of functors \(E\cong \Hom_A(-,E(A))\). Moreover, up to isomorphism there exists at most one dualizing functor on \(\Amod\).
\end{proposition}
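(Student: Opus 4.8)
The plan is to run the classical Matlis-duality argument. Since $A$ is Noetherian and zero-dimensional it is Artinian, so every finitely generated $A$-module has finite length and $A$ is projective in $\Amod$.

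First I would extract the formal content of $E^{2}\cong 1$. Such an $E$ is an anti-equivalence of $\Amod$ with itself, hence additive and exact (as a category equivalence $\Amod^{\op}\to\Amod$ it preserves kernels and cokernels). Write $\omega:=E(A)$. Additivity gives a canonical identification $E(A^{\oplus n})\cong\omega^{\oplus n}=\Hom_{A}(A^{\oplus n},\omega)$, so $E$ and $\Hom_{A}(-,\omega)$ already agree on finitely generated free modules. Two further consequences are worth recording: because $A$ is projective and $E$ is exact and contravariant, $\omega$ is an injective $A$-module; and because $E$ restricts to an anti-equivalence on simple objects, $E(\bk)$ is simple, hence $E(\bk)\cong\bk$.

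Next I would produce a natural transformation $\Theta\colon E\Rightarrow\Hom_{A}(-,\omega)$ and show it is an isomorphism. For an $A$-module $M$ and $m\in M$, let $\hat m\in\Hom_{A}(A,M)$ be the unique $A$-linear map with $\hat m(1)=m$, and set $\Theta_{M}(\xi)(m):=E(\hat m)(\xi)\in\omega$ for $\xi\in E(M)$. Writing $\widehat{am}=\mu_{a}^{M}\circ\hat m$, where $\mu_{a}^{M}$ is multiplication by $a$ on $M$, and using that $E$ sends $\mu_{a}^{M}$ to multiplication by $a$ on $E(M)$, one checks that $\Theta_{M}(\xi)$ is $A$-linear, so $\Theta_{M}\colon E(M)\to\Hom_{A}(M,\omega)$; naturality of $\Theta$ is a short diagram chase using $f\circ\hat m=\widehat{f(m)}$ and contravariance of $E$. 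On $M=A$ the map $\Theta_{A}$ is, under $\Hom_{A}(A,\omega)\cong\omega$, the identity, hence an isomorphism, and so is $\Theta_{A^{\oplus n}}$ by additivity. For general finitely generated $M$ pick a finite presentation $A^{\oplus m}\to A^{\oplus n}\to M\to 0$; since $E$ is exact contravariant and $\Hom_{A}(-,\omega)$ is left exact contravariant, each functor $F$ identifies $F(M)$ with $\ker\!\bigl(F(A^{\oplus n})\to F(A^{\oplus m})\bigr)$, and naturality of $\Theta$ along $A^{\oplus m}\to A^{\oplus n}$ forces $\Theta_{M}$ to be an isomorphism of those kernels. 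Hence $E\cong\Hom_{A}(-,E(A))$.

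For uniqueness, let $E,E'$ be dualizing with $\omega=E(A)$ and $\omega'=E'(A)$; by the above $E\cong\Hom_{A}(-,\omega)$ and $E'\cong\Hom_{A}(-,\omega')$, so it suffices to show $\omega\cong\omega'$. Now $\omega$ is injective and $\soc(\omega)=\Hom_{A}(\bk,\omega)\cong E(\bk)\cong\bk$ is simple, so $\omega$ is the injective hull of the residue field: the essential inclusion $\bk\hookrightarrow E_{A}(\bk)$ together with injectivity of $\omega$ yields an injection $E_{A}(\bk)\hookrightarrow\omega$, which splits by injectivity of $E_{A}(\bk)$, and the complementary summand has zero socle, hence vanishes. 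As $E_{A}(\bk)$ is unique up to isomorphism, $\omega\cong\omega'$ and therefore $E\cong E'$.

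The step I expect to be the real obstacle is getting $\Theta$ right — in particular the verification that $\Theta_{M}(\xi)$ is $A$-linear, which rests on $E$ carrying each multiplication-by-$a$ endomorphism to the corresponding multiplication-by-$a$ endomorphism of the image. Once that compatibility of $E$ with the $A$-module structure is available, everything else (naturality, the reduction to free modules via presentations, the socle computation) is formal.
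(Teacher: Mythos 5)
The paper gives no proof of this Proposition; it simply cites Eisenbud, so there is nothing internal to compare against. Your argument is the standard Matlis-duality proof of the statement, and the overall structure is sound: the construction $\Theta_M(\xi)(m)=E(\hat m)(\xi)$, the computation on $A$, the propagation to all of $\Amod$ via additivity, exactness, and finite presentations, and the identification of $E(A)$ with the injective hull of $\bk$ for uniqueness are all correct as written. There is, however, one genuine gap, and it is precisely the point you flag at the end: the claim that $E$ carries the multiplication-by-$a$ endomorphism $\mu_a^M$ of $M$ to $\mu_a^{E(M)}$, i.e.\ that $E$ is an $A$-linear functor and not merely additive. This does \emph{not} follow from $E^2\cong 1$ as the paper's Definition states it, and it cannot be ``checked.'' If $A$ has a nontrivial $\bk$-algebra involution $\sigma$ (e.g.\ $A=\bk[x,y]/(x,y)^2$ with $\sigma$ exchanging $x$ and $y$), then $E'(M):=\Hom_{\bk}(M,\bk)$ with the twisted action $a\ast\varphi:=\varphi\circ\mu_{\sigma(a)}$ is an additive, exact, contravariant autoequivalence of $\Amod$ with $E'^2\cong 1$, yet $E'(\mu_a^M)=\mu_{\sigma(a)}^{E'(M)}$. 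Moreover $E'\not\cong\Hom_A(-,E'(A))$: naturality of a putative comparison along $\mu_a\colon A\to A$ would force $\eta_A(a\varphi)=a\,\eta_A(\varphi)$, while $A$-linearity out of $E'(A)$ would force $\eta_A(a\varphi)=\sigma(a)\,\eta_A(\varphi)$, and faithfulness of $\Hom_{\bk}(A,\bk)$ then gives $\sigma=1$.

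The resolution is that Eisenbud's actual definition of a dualizing functor requires $E$ to be $A$-linear (and exact); the Definition reproduced in the paper has dropped that clause. Once you reinstate $A$-linearity as a hypothesis rather than treating it as a consequence of $E^2\cong 1$, your proof is complete and correct; the additivity and exactness you rightly extract from $E^2\cong 1$ are real but by themselves cannot supply the needed compatibility of $E$ with the $A$-action.
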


Consider the functor \((-)^{\vee}:\equiv \Hom_{\bk}(-,\bk)\). For an \(A\)-module \(M\), the vector space \(\Hom_{\bk}(M,\bk)\) naturally forms an \(A\)-module with the \(A\)-action given by
\[(a\cdot \varphi)(m) = \varphi(am)\]
where \(\varphi\in M^{\vee}\), \(m\in M\), and \(a\in A\). Therefore, we can view \((-)^{\vee}\) as a functor \(\Amod^{\op}\to\Amod\).

\begin{proposition}[\cite{eisenbud}, Section 21.1] \label{prop_vee}
The functor \((-)^{\vee}\) is dualizing.
\end{proposition}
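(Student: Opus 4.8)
The plan is to reduce everything to the classical fact that a finite-dimensional \(\bk\)-vector space is canonically isomorphic to its double dual. By Proposition \ref{prop_dualizing} it suffices to exhibit a natural isomorphism \(((-)^{\vee})^{\vee}\cong 1\).

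The first step is to record that every object of \(\Amod\) is finite-dimensional over \(\bk\). Indeed, \(A\) is a finitely generated \(\bk\)-algebra of Krull dimension \(0\), hence Noetherian and Artinian; being local, \(\m^N=0\) for some \(N\), and each layer \(\m^i/\m^{i+1}\) is a finitely generated \(A/\m=\bk\)-module, so finite-dimensional. Filtering \(A\) by powers of \(\m\) shows \(\dim_{\bk}A<\infty\), and therefore any finitely generated \(A\)-module \(M\) satisfies \(\dim_{\bk}M<\infty\) as well. In particular \(M^{\vee}=\Hom_{\bk}(M,\bk)\) is again finite-dimensional over \(\bk\), hence finitely generated over \(A\), so \((-)^{\vee}\) really does take values in \(\Amod\); that it is an additive contravariant functor is immediate from the definition of the \(A\)-action given above.

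Next I would write down the candidate natural transformation. For \(M\in\Amod\) let \(\eta_M\colon M\to M^{\vee\vee}\) be evaluation, \(\eta_M(m)(\varphi)=\varphi(m)\) for \(m\in M\) and \(\varphi\in M^{\vee}\). One checks that \(\eta_M\) is \(A\)-linear: for \(a\in A\),
\[(a\cdot\eta_M(m))(\varphi)=\eta_M(m)(a\varphi)=(a\varphi)(m)=\varphi(am)=\eta_M(am)(\varphi),\]
using twice the formula \((a\cdot\psi)(x)=\psi(ax)\) defining the \(A\)-action on a dual. Naturality of \(\eta\) in \(M\) is the usual diagram chase and follows by unwinding definitions. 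Finally, forgetting the \(A\)-module structure, \(\eta_M\) is precisely the canonical map from the finite-dimensional \(\bk\)-vector space \(M\) to its double dual, which is a \(\bk\)-linear isomorphism by the first step; a bijective \(A\)-linear map is an isomorphism in \(\Amod\), so \(\eta_M\) is invertible. Hence \(\eta\colon 1\Rightarrow((-)^{\vee})^{\vee}\) is a natural isomorphism and \((-)^{\vee}\) is dualizing.

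I do not expect a genuine obstacle here; the only point that must not be glossed over is the finite-dimensionality in the first step — it is exactly what fails for non-Artinian \(A\), and it is what simultaneously makes ``\(M^{\vee}\in\Amod\)'' and the surjectivity of \(\eta_M\) work — together with the small but necessary check that evaluation is \(A\)-linear and not merely \(\bk\)-linear.
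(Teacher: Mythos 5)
Your proof is correct and is the standard argument, which is essentially what the cited reference (Eisenbud, Section 21.1) does; the paper itself gives no proof beyond the citation. The only quibble is the opening appeal to Proposition \ref{prop_dualizing}: what you actually use is the \emph{definition} of ``dualizing'' as $E^2\cong 1$, not that proposition (which concerns consequences of being dualizing). Apart from that slip, the three essential ingredients are all present and handled correctly: finite $\bk$-dimensionality of every object of $\Amod$ (so that $M^{\vee}$ lands back in $\Amod$ and the evaluation map is onto), the verification that evaluation is $A$-linear and not merely $\bk$-linear, and naturality.
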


Combining Propositions \ref{prop_dualizing} and \ref{prop_vee} shows that up to isomorphism there exists a unique dualizing functor on \(\Amod\).

\begin{definition}
We say that \(A\) is \emph{Gorenstein} if \(A^{\vee}\cong A\).
\end{definition}

If \(A\) is Gorenstein, then in view of Proposition \ref{prop_dualizing} we have an isomorphism of functors \(\Hom_A(-,A)\cong\Hom_A(-,A^{\vee})\cong (-)^{\vee}\). In particular \(\Hom_A(-,A)\) is dualizing. Conversely, if \(\Hom_A(-,A)\) is dualizing, then, by Proposition \ref{prop_dualizing} and Yoneda lemma, \(A\) is isomorphic to \(A^{\vee}\), so \(A\) is Gorenstein.

We have the following characterization of zero-dimensional Gorenstein rings.

\begin{proposition}[\cite{eisenbud}, Proposition 21.5] \label{prop_gor_class}
Let \((A,\m,\bk)\) be a zero-dimensional, finitely generated local \(k\)-algebra. Then, the following conditions are equivalent.
\begin{enumerate}
\item \(A\) is Gorenstein.
\item \(A\) is injective as an \(A\)-module.
\item The annihilator of the maximal ideal \(\Ann(\m)\subset A\) is one dimensional.
\item \(\Hom_A(-,A)\) is dualizing.
\end{enumerate}
\end{proposition}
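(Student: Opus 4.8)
The plan is to note that $(1)\Leftrightarrow(4)$ has in fact already been established in the paragraph preceding the statement (from Proposition~\ref{prop_dualizing} and the Yoneda lemma), so that it remains to prove the cycle $(1)\Rightarrow(2)\Rightarrow(3)\Rightarrow(1)$. The standing facts I would use are: $A$ is Artinian, hence of finite length over itself with every composition factor $A/\m=\bk$, so $\dim_\bk A=\ell_A(A)<\infty$ and consequently $\dim_\bk A^\vee=\dim_\bk A$; the functor $(-)^\vee=\Hom_\bk(-,\bk)$ is exact because $\bk$ is a field; and $\Ann(\m)$ is the socle of $A$ and is nonzero (if $\m^N=0$ and $\m\neq0$, then $\m^{N-1}\subseteq\Ann(\m)$; if $\m=0$ then $A=\bk$ and the claim is trivial).

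For $(1)\Rightarrow(2)$: if $A$ is Gorenstein then $A\cong A^\vee$, so it is enough to see that $A^\vee$ is injective. The hom--tensor adjunction supplies a natural isomorphism $\Hom_A\bigl(M,A^\vee\bigr)=\Hom_A\bigl(M,\Hom_\bk(A,\bk)\bigr)\cong\Hom_\bk(M,\bk)=M^\vee$, functorial in $M$; since $(-)^\vee$ is exact, $\Hom_A(-,A^\vee)$ is exact, i.e.\ $A^\vee$, hence $A$, is an injective $A$-module. For $(2)\Rightarrow(3)$ I would use Baer's criterion. Fix $0\neq y\in\Ann(\m)$; since $\m y=0$ and $\bk$ acts by scalars, the line $\bk y$ is an ideal of $A$, and for any $y'\in\Ann(\m)$ the assignment $y\mapsto y'$ is a well-defined $A$-linear map $\bk y\to A$. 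By injectivity of $A$ it extends to some $\tilde g\colon A\to A$, and then $y'=\tilde g(y)=\tilde g(1)\cdot y\in\bk y$ because $y\in\Ann(\m)$. As $y'$ was arbitrary, $\Ann(\m)=\bk y$ is one-dimensional.

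For $(3)\Rightarrow(1)$: write $\Ann(\m)=\bk y$ with $y\neq0$. The crucial observation is that every nonzero ideal $\mathfrak b\ideal A$ contains $y$ --- indeed $\mathfrak b$, being a nonzero module of finite length, has a simple submodule, which is killed by $\m$ and hence lies in $\Ann(\m)=\bk y$, so it equals $\bk y$. Now pick $\lambda\in A^\vee$ with $\lambda(y)=1$ and define the $A$-linear map $\phi\colon A\to A^\vee$ by $\phi(a)(b)=\lambda(ab)$. Its kernel $\{a\in A: aA\subseteq\ker\lambda\}$ is the largest ideal of $A$ contained in $\ker\lambda$; were it nonzero it would contain $y$, contradicting $\lambda(y)=1$. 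Hence $\phi$ is injective, and since $\dim_\bk A=\dim_\bk A^\vee$ it is an isomorphism of $A$-modules, so $A\cong A^\vee$ and $A$ is Gorenstein.

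The steps $(1)\Rightarrow(2)$ and $(2)\Rightarrow(3)$ are formal; the one carrying real content is $(3)\Rightarrow(1)$, where a genuine isomorphism $A\cong A^\vee$ must be produced from the bare hypothesis $\dim_\bk\Ann(\m)=1$. The heart of the matter is recognizing that a one-dimensional socle forces the socle generator $y$ into every nonzero ideal, which is exactly the nondegeneracy needed to make the pairing $(a,b)\mapsto\lambda(ab)$ perfect for any $\lambda$ with $\lambda(y)\neq0$; a length count then upgrades the resulting injection to an isomorphism. (One could instead deduce $(2)\Rightarrow(3)$ from the structure theory of injectives over a Noetherian ring: over an Artinian local ring the unique indecomposable injective is the injective hull of $\bk$, which has simple socle, and $A$ is indecomposable as a module because it is local; but the Baer-criterion argument is self-contained.)
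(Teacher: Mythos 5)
The paper does not prove this proposition --- it is imported from Eisenbud (Proposition 21.5), and the paragraph preceding the statement only establishes the equivalence $(1)\Leftrightarrow(4)$, which you correctly reuse. So there is no paper proof to compare against; I can only assess your argument on its own terms, and it is correct and self-contained. The hom--tensor adjunction $\Hom_A(-,A^\vee)\cong(-)^\vee$ is indeed the standard reason $A^\vee$ is injective; the socle computation in $(2)\Rightarrow(3)$ is valid (well-definedness of $y\mapsto y'$ on $\bk y$ uses that both lie in $\Ann(\m)$ and that $\bk$ sits inside $A$, which the hypotheses guarantee); and the argument for $(3)\Rightarrow(1)$ --- that a one-dimensional socle $\bk y$ must lie in every nonzero ideal, so the pairing $\phi(a)(b)=\lambda(ab)$ has zero kernel for any $\lambda$ with $\lambda(y)\neq 0$, and a length count upgrades the injection $A\hookrightarrow A^\vee$ to an isomorphism --- is the classical route, essentially the one Eisenbud gives. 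One small terminological slip: what you invoke in $(2)\Rightarrow(3)$ is not Baer's criterion but the direct use of injectivity (extending a map defined on the submodule $\bk y$ of $A$); Baer's criterion is the converse implication. This does not affect the correctness of the step.
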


\subsection{Small tangent space condition} \label{sec_small_tan}
\hfill

In this section, we introduce the small tangent space condition and better describe the tangent space of the Hilbert scheme associated to a polynomial ring. We also prove Proposition \ref{prop_general}, which is needed in the proof of Theorem \ref{thm_main}.

As in section \ref{sec_contraction}, let \(S\) be a polynomial ring of \(n\) variables over a field \(\bk\), and let \(P\) be its associated divided power ring. We denote by \(\hilb\) the Hilbert scheme associated to \(S\). Recall that \(\Apolar(f)= S/\Ann(f)\). If \(I=\Ann(f)\) we write \(S/I\) and \(\Apolar(f)\) interchangeably.

When saying that a graded module \(M\) has Hilbert function \((h_0,h_1,...,h_j)\), \(h_i\in \mathbb N\) we mean that \(H(M)_i\) is equal to \(h_i\) for \(i\in\{0,1,...,j\}\) and that \(H(M)_i\) is equal to \(0\) for \(i\notin\{0,1,...,j\}\). For example, \(S\) has Hilbert function \((1,n,\binom{n+1}{2},\binom{n+2}{3},...)\). We denote a shift of gradation in square brackets, so \(M[d]_i=M_{d+i}\).

\begin{theorem}[\cite{iarrob}, Lemma 1.2 and Theorem 1.5] \label{thm_gor_class}
For every nonzero \(f\in P\) homogeneous of degree \(d\) the apolar algebra \(\Apolar(f)\) is a graded zero-dimensional Gorenstein local ring. Moreover, there is a graded isomorphism \(\Apolar(f)\cong \Apolar(f)^{\vee}[-d]\).

For every homogeneous ideal \(I\triangleleft S\), if \(S/I\) is a zero-dimensional Gorenstein local ring, then there exists homogeneous \(f\in P\) such that \(I=\Ann(f)\).
\end{theorem}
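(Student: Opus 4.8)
The plan is to split into the two asserted claims and handle each separately, using the dualizing functor $(-)^\vee$ developed in section \ref{sec_gorenstein} together with the characterization of Gorenstein rings in Proposition \ref{prop_gor_class}. For the first claim, fix a nonzero homogeneous $f\in P$ of degree $d$ and set $A=\Apolar(f)=S/\Ann(f)$. First I would check that $A$ is a graded zero-dimensional local ring: since $\Ann(f)$ is a homogeneous ideal, $A$ inherits a grading; it is local with maximal ideal $\m=S_+/\Ann(f)$ because $S_+$ is the unique graded maximal ideal of $S$; and it is zero-dimensional because $f$ has a bounded degree, so $(S_+)^{d+1}\cdot f=0$, i.e.\ $(S_+)^{d+1}\subseteq\Ann(f)$, forcing $\dim_\bk A<\infty$. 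The substantive point is Gorensteinness. Here I would use criterion (3) of Proposition \ref{prop_gor_class}: show $\Ann_A(\m)$ is one-dimensional over $\bk$. An element $\bar s\in A$ lies in $\Ann_A(\m)$ iff $S_+\cdot s\subseteq\Ann(f)$, i.e.\ iff $S_+\cdot(s\cdot f)=0$, which says $s\cdot f$ is a $\bk$-multiple of the constant functional $\mx^{[0]}=1\in P$ (the only elements of $P$ killed by every $\alpha_i$). Since $f$ has degree $d>0$ (if $d=0$ the statement is trivial), and $S$ acts transitively enough on $f$—concretely, the degree-$d$ part of $S$ maps onto $\bk\cdot 1$ via $s\mapsto s\cdot f$ because $f\neq 0$ means some monomial coefficient is nonzero—this space of $\bar s$ is exactly one-dimensional, spanned by the image of one degree-$d$ monomial $\alpha^{\mb}$ with $\alpha^{\mb}\cdot f\neq 0$. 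Hence $A$ is Gorenstein.

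For the graded isomorphism $A\cong A^\vee[-d]$, the idea is to use the perfect pairing that contraction induces. Define a map $A\to A^\vee[-d]$ by sending $\bar s$ to the functional $\bar t\mapsto \langle t\cdot s, \text{(coefficient extraction of }\mx^{[0]}\text{ from }\cdot f)\rangle$; more precisely the pairing $A\times A\to\bk$ given by $(\bar s,\bar t)\mapsto c_0(s t\cdot f)$, where $c_0(\,\cdot\,)$ denotes the coefficient of $\mx^{[0]}$. This is well-defined (if $s\in\Ann(f)$ then $st\cdot f=0$), $S$-bilinear in the contraction sense, symmetric, and graded of degree $-d$ (it is nonzero on $A_i\times A_{d-i}$ only). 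Nondegeneracy follows from the defining property of $\Ann(f)$: if $\bar t$ pairs to zero with everything then $st\cdot f$ has zero constant term for all $s$, which forces $t\cdot f=0$, i.e.\ $\bar t=0$; nondegeneracy on the other side is symmetric. A perfect graded pairing $A\times A\to\bk$ of degree $-d$ is precisely a graded isomorphism $A\xrightarrow{\sim}A^\vee[-d]$, giving the claim. (As a sanity check, this also reproves Gorensteinness via criterion (1), since it exhibits $A^\vee\cong A[d]\cong A$ as ungraded modules.)

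For the converse, suppose $I\triangleleft S$ is homogeneous with $B=S/I$ a zero-dimensional graded Gorenstein local ring, say with socle degree $d$ (the top nonzero graded piece; $B_{d}\neq 0$, $B_{d+1}=0$). By Proposition \ref{prop_gor_class}(3), $\soc(B)=\Ann_B(\m)$ is one-dimensional; since $B$ is graded and $\m=B_+$, the socle is a graded subspace, and as $B_d\subseteq\soc(B)$ the socle must equal $B_d$, which is therefore one-dimensional. Now $B^\vee=\Hom_\bk(B,\bk)$ is a graded $S$-module (via the $S$-action factoring through $B$), cyclic: indeed $B^\vee$ is generated by any functional $\varphi$ dual to a basis vector of $B_d$, because $\soc(B^\vee)$—which under the dualizing equivalence corresponds to $B/\m B\cong\bk$—is one-dimensional, and a graded module over $S$ whose "cosocle" is one-dimensional is cyclic by graded Nakayama applied to $B^\vee$. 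A cyclic graded $S$-module that is finite-dimensional is, by definition of $P$ and contraction, of the form $S\cdot f$ for some homogeneous $f\in P$ of degree $d$: one takes $f=\varphi$ regarded inside $P\subseteq S^\vee$ after identifying $B^\vee$ with the $S$-submodule it generates in $S^\vee$ (which lands in $P$ since it is finite-dimensional, hence killed by a high power of $S_+$). Then $\Ann(f)=\Ann_S(S\cdot f)=\Ann_S(B^\vee)=I$, the last equality because $(-)^\vee$ is faithful and $\Ann_S(B^\vee)=\Ann_S(B)=I$.

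The main obstacle I anticipate is the careful bookkeeping in the converse direction—specifically, identifying the cyclic $S$-module $B^\vee$ concretely as $S\cdot f$ inside $P$, and pinning down that its annihilator is exactly $I$ rather than something a priori larger. The cleanest route is probably to avoid ad hoc arguments and instead invoke the dualizing functor machinery of section \ref{sec_gorenstein} systematically: $(-)^\vee$ being dualizing gives $\Ann_S(B^\vee)=\Ann_S(B)$ immediately (annihilators are preserved by the equivalence), and graded Nakayama handles cyclicity, so the only genuinely hands-on step is recognizing finite-dimensional cyclic graded $S$-modules as apolar modules $S\cdot f$, which is essentially the content of the definition of $P$.
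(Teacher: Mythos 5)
Your proposal is correct, but it is worth noting that the paper does not prove this statement at all: Theorem \ref{thm_gor_class} is quoted as a black box from Iarrobino (it is Macaulay's apolarity duality), so any proof is ``a different route'' from the paper. What you have written is essentially the standard self-contained argument: Gorensteinness of \(\Apolar(f)\) via the socle criterion of Proposition \ref{prop_gor_class}(3), the graded isomorphism \(\Apolar(f)\cong\Apolar(f)^{\vee}[-d]\) from the perfect pairing \((\bar s,\bar t)\mapsto c_0(st\cdot f)\) (and your pairing is indeed \(S\)-linear, so it gives a module isomorphism, which is what Proposition \ref{prop_hilb_I} actually needs), and the converse by dualizing \(S\twoheadrightarrow S/I\), using one-dimensionality of the socle plus Nakayama to see \((S/I)^{\vee}\) is cyclic inside \(P\), and \(\Ann_S((S/I)^{\vee})=\Ann_S(S/I)=I\). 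The one place where the wording wobbles is in the cyclicity step: you justify it by saying \(\soc(B^{\vee})\) is one-dimensional (it corresponds to \(B/\m B\)), but what graded Nakayama needs is that the cosocle \(B^{\vee}/\m B^{\vee}\cong(\soc B)^{\vee}\) is one-dimensional; this is exactly dual to the Gorenstein condition \(\dim_{\bk}\soc(B)=1\) you already established, so the fix is immediate, but the object named in the justification is the wrong one. With that sentence repaired, the argument is complete and has the advantage over the paper's citation of being explicit about the grading conventions (where the shift \([-d]\) comes from) and about why \(\Ann(f)=I\) exactly, rather than merely containing it.
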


We can now describe the tangent space of the Hilbert scheme more concretely.

\begin{proposition} \label{prop_hilb_I}
Let \(f\in P\) be homogeneous of degree \(d\), and let \(I=\Ann(f)\). Then, the tangent space \(T_I\hilb\) is isomorphic as a graded module to \((I/I^2)^{\vee}[-d]\).
\end{proposition}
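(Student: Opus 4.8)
The plan is to combine the two structural facts already available: the tangent space $T_I\hilb$ equals $\Hom_S(I,S/I)$ by Theorem~\ref{thm_tangent}, and $S/I=\Apolar(f)$ is Gorenstein with a self-duality $\Apolar(f)\cong\Apolar(f)^\vee[-d]$ by Theorem~\ref{thm_gor_class}. So the task reduces to rewriting $\Hom_S(I,S/I)$ in terms of $I/I^2$ and the duality functor, keeping track of the grading shift.

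First I would observe that any $S$-module homomorphism $\varphi\colon I\to S/I$ automatically kills $I\cdot I=I^2$, since $\varphi(I^2)=I\cdot\varphi(I)\subseteq I\cdot(S/I)=0$. Hence $\varphi$ factors through $I/I^2$, giving a natural identification $\Hom_S(I,S/I)\cong\Hom_S(I/I^2,S/I)$, and this is an isomorphism of graded modules. Next, since $I/I^2$ is an $S/I$-module, $\Hom_S(I/I^2,S/I)=\Hom_{S/I}(I/I^2,S/I)$; writing $A=S/I=\Apolar(f)$ this is $\Hom_A(I/I^2,A)$. Now I would invoke the Gorenstein property: by Proposition~\ref{prop_gor_class} together with the discussion following it, $\Hom_A(-,A)$ is a dualizing functor on $\Amod$, and by Proposition~\ref{prop_dualizing} it is isomorphic to $(-)^\vee=\Hom_\bk(-,\bk)$. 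Thus $\Hom_A(I/I^2,A)\cong (I/I^2)^\vee$ as $A$-modules, hence as $S$-modules.

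The remaining point, and the one requiring the most care, is the grading shift, since all the isomorphisms above must be checked to be homogeneous of the correct degree. The module $(-)^\vee$ taken in the ungraded sense loses the grading, so I would instead use the graded dual: for a graded module $M$ one sets $(M^\vee)_i=(M_{-i})^\vee$. The self-duality of Theorem~\ref{thm_gor_class} reads $A\cong A^\vee[-d]$, equivalently $A^\vee\cong A[d]$, which reflects that $A$ is concentrated in degrees $0$ through $d$ with its socle in degree $d$. Chasing this through, $\Hom_A(I/I^2,A)\cong\Hom_A(I/I^2,A^\vee[-d])\cong\Hom_\bk(I/I^2,\bk)[-d]=(I/I^2)^\vee[-d]$, where the middle isomorphism is the graded form of $\Hom_A(-,A^\vee)\cong(-)^\vee$ coming from Hom-tensor adjunction $\Hom_A(M,\Hom_\bk(A,\bk))\cong\Hom_\bk(M\otimes_A A,\bk)=\Hom_\bk(M,\bk)$. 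Combining, $T_I\hilb\cong\Hom_S(I,S/I)\cong(I/I^2)^\vee[-d]$, as claimed.

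The main obstacle is purely bookkeeping: making sure the duality functor is used in its graded incarnation, and that the shift $[-d]$ appears exactly once, coming from the position of the socle of $\Apolar(f)$ in degree $d$. An alternative that avoids spelling out the graded dual is to verify the Hilbert-function identity degree by degree — $\dim_\bk(T_I\hilb)_i=\dim_\bk(I/I^2)_{d-i}$ — using $\dim_\bk\Hom_A(M,N)_i=\sum_j\dim_\bk M_j\cdot$ (contribution in degree $i+j$) and the self-duality of $A$; but the functorial argument is cleaner, so I would present that and relegate the degree check to a remark if needed.
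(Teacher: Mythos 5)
Your proof is correct and follows essentially the same route as the paper: reduce $\Hom_S(I,S/I)$ to $\Hom_{S/I}(I/I^2,S/I)$, apply the Gorenstein self-duality $S/I\cong(S/I)^\vee[-d]$, and finish with the adjunction $\Hom_{S/I}(-,(S/I)^\vee)\cong(-)^\vee$. The only cosmetic difference is that you pass to $I/I^2$ by observing directly that any map $I\to S/I$ kills $I^2$, while the paper cites the tensor--hom adjunction together with $I\otimes_S S/I\cong I/I^2$; your grading bookkeeping with the shift $[-d]$ matches the paper's.
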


\begin{proof}
By Theorem \ref{thm_tangent} the tangent space to \(\hilb\) at \(I\) is isomorphic to \(\Hom_{S}(I,S/I)\). By the tensor-hom adjunction and the isomorphism \(I\otimes_S S/I\cong I/I^2\) we get
\[\Hom_{S}(I,S/I)\cong \Hom_{S/I}(I/I^2,S/I).\]
Then, Theorem \ref{thm_gor_class} yields
\[\Hom_{S/I}(I/I^2,S/I)\cong \Hom_{S/I}(I/I^2,(S/I)^{\vee})[-d].\]
Now, again by the tensor-hom adjunction, we obtain
\[\Hom_{S/I}(I/I^2,(S/I)^{\vee})[-d]\cong (I/I^2)^{\vee}[-d].\]
Hence \(T_I\hilb\cong (I/I^2)^{\vee}[-d]\)
as required.
\end{proof}

From now on, since we are mainly interested in degree 3 homogeneous elements of \(P\), we reduce ourselves to this special case.

Let \(f\in P\) be homogeneous of degree 3, and let \(I=\Ann(f)\). Since \(S_{\geq 4}\) is contained in \(I\) the Hilbert function of \(S/I\) can be nonzero only in degrees \(0, 1, 2\), and 3. Moreover, since \((S/I)^{\vee}\cong (S/I)[3]\) the Hilbert function is symmetric in the sense that \(H(S/I)_0=H(S/I)_3\) and \(H(S/I)_1=H(S/I)_2\). Clearly, \(H(S/I)_0=1\) and \(H(S/I)_1\leq n\). In view of the following proposition, the case \(H(S/I)_1<n\) might be considered degenerate.

\begin{proposition}[\cite{iarrobino}, Proposition 3.12] \label{prop_gen_hilbert_fun}
There is an open, dense subset \(U\) of the space of cubics \(\Spec \Sym (P_3)^{\vee}\) such that for all rational points \(f\in U(\bk)\) the Hilbert function of \(\Apolar(f)\) is \((1,n,n,1)\).
\end{proposition}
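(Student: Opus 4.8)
The plan is to reduce the statement, via the Gorenstein symmetry already recorded before the proposition, to a single rank condition on a linear map depending on \(f\), then observe that this condition is open and exhibit one cubic that satisfies it. Indeed, by Theorem~\ref{thm_gor_class} and the discussion preceding the proposition, every nonzero homogeneous \(f\) of degree \(3\) has \(H(\Apolar(f))_0=H(\Apolar(f))_3=1\) and \(H(\Apolar(f))_1=H(\Apolar(f))_2\le n\), so \(H(\Apolar(f))=(1,n,n,1)\) holds if and only if \(H(\Apolar(f))_1=n\), i.e. if and only if \(\Ann(f)_1=0\), i.e. if and only if the contraction map \(\mu_f\colon S_1\to P_2\), \(\alpha\mapsto\alpha\cdot f\), is injective. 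Note there is room for this, since \(\dim_{\bk}S_1=n\le\binom{n+1}{2}=\dim_{\bk}P_2\).

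Next I would set up the family. The assignment \(f\mapsto\mu_f\) is \(\bk\)-linear in \(f\), so after choosing bases of \(S_1\) and \(P_2\) the entries of the matrix of \(\mu_f\) are linear forms in the coordinates of \(f\) on \(\Spec\Sym(P_3)^{\vee}\), which is an affine space over \(\bk\) (in particular irreducible) whose rational points are exactly the cubics. Let \(U\) be the locus where at least one \(n\times n\) minor of this matrix is nonzero. Then \(U\) is open, its \(\bk\)-points are precisely the cubics \(f\) with \(\mathrm{rank}\,\mu_f=n\), equivalently \(\Ann(f)_1=0\), and since the ambient scheme is irreducible \(U\) is dense as soon as it is nonempty.

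Finally I would exhibit a witness cubic: take \(f=x_1^{[3]}+\dots+x_n^{[3]}\in P_3\). For a linear form \(\ell=\sum_i c_i\alpha_i\in S_1\) one computes \(\alpha_j\cdot x_i^{[3]}=\delta_{ij}\,x_i^{[2]}\), whence \(\ell\cdot f=\sum_i c_i\,x_i^{[2]}\), which vanishes only if all \(c_i=0\). Hence \(\Ann(f)_1=0\), so \(f\in U(\bk)\) and \(U\) is nonempty. Combined with the previous paragraph, \(U\) is then open and dense, and for every \(f\in U(\bk)\) we have \(\Ann(f)_1=0\) and therefore \(H(\Apolar(f))=(1,n,n,1)\), as claimed.

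There is no serious obstacle in this argument; the only points requiring care are (i) invoking the Gorenstein symmetry correctly so that verifying \(\Ann(f)_1=0\) genuinely suffices, and (ii) making sure the witness is \(\bk\)-rational, so that \(U(\bk)\ne\varnothing\) and density (over a possibly non-closed field) follows from irreducibility of the affine space of cubics. The Fermat-type cubic above is defined over the prime field and the computation of \(\ell\cdot f\) is valid in any characteristic, since the \(x_i^{[2]}\) are distinct basis vectors of \(P_2\); so the argument is characteristic-free, even though it is only needed here over a field of characteristic \(0\).
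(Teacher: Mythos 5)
Your argument is correct. Note that the paper gives no internal proof of this proposition at all: it is quoted from Iarrobino--Kanev (their Proposition 3.12), so your write-up is a self-contained replacement for the citation rather than a variant of an argument in the text. The route you take is the natural one and it works: by the symmetry \(\Apolar(f)\cong\Apolar(f)^{\vee}[-3]\) of Theorem \ref{thm_gor_class}, together with \(S_{\ge 4}\subset\Ann(f)\), the condition \(H(\Apolar(f))=(1,n,n,1)\) is equivalent to \(\Ann(f)_1=0\), i.e.\ to injectivity of the contraction map \(\mu_f\colon S_1\to P_2\); since \(f\mapsto\mu_f\) is linear, this is the nonvanishing of some \(n\times n\) minor of a matrix whose entries are coordinates of \(f\), hence an open condition on \(\Spec\Sym(P_3)^{\vee}\), and density follows from irreducibility of this affine space once the locus is nonempty. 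Your Fermat-type witness is computed correctly in the divided-power convention (\(\alpha_j\cdot x_i^{[3]}=\delta_{ij}\,x_i^{[2]}\), so \(\ell\cdot f=\sum_i c_i x_i^{[2]}\ne 0\) for \(\ell\ne 0\)), it is defined over the prime field, and injectivity of \(\mu_f\) automatically excludes \(f=0\), so the symmetry theorem applies to every point of \(U(\bk)\). As you observe, the argument is characteristic-free, which is slightly more than is needed at this point of the paper but consistent with how the statement is later used for \(n\ge 18\) in arbitrary characteristic.
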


\begin{proposition} \label{prop_low_bound}
Let \(f\in P\) be homogeneous of degree \(3\), and let \(I=\Ann(f)\). If \(\Apolar(f)\) has Hilbert function \((1,n,n,1)\), then \(H(S/I^2)_4\geq n\).
\end{proposition}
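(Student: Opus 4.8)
The plan is to dualise the problem and exhibit $n$ linearly independent elements of $P_4$ killed by $(I^2)_4$. Since $\Apolar(f)$ has Hilbert function $(1,n,n,1)$ we have $I_0=I_1=0$, so in degree $4$ the ideal $I^2$ equals $I_2\cdot I_2$, the span in $S_4$ of the products $gh$ with $g,h\in I_2$. Hence $H(S/I^2)_4=\dim_{\bk}S_4-\dim_{\bk}(I_2\cdot I_2)$, and because the pairing $S_4\times P_4\to\bk$ is perfect this equals $\dim_{\bk}(I_2\cdot I_2)^{\perp}$, where $(I_2\cdot I_2)^{\perp}=\{F\in P_4:\ gh\cdot F=0\text{ for all }g,h\in I_2\}\subseteq P_4$. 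So it suffices to produce an $n$-dimensional subspace of $(I_2\cdot I_2)^{\perp}$.

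My candidates are the elements $\mx^{[\mathbf e_1]}f,\dots,\mx^{[\mathbf e_n]}f\in P_4$, where $\mathbf e_i$ is the $i$-th standard multi-index and the product is taken in the divided power ring $P$. To see that $gh\cdot(\mx^{[\mathbf e_i]}f)=0$ for $g,h\in I_2$, I would first record the Leibniz rule $\alpha_j\cdot(FG)=(\alpha_j\cdot F)G+F(\alpha_j\cdot G)$ for contraction by a single variable, which follows from Pascal's identity applied to the binomial coefficients defining multiplication in $P$. Writing $h=\sum_{j,k}h_{jk}\alpha_j\alpha_k$ with $h_{jk}=h_{kj}$ and applying the rule twice gives $\alpha_j\alpha_k\cdot(\mx^{[\mathbf e_i]}f)=\delta_{ij}(\alpha_k\cdot f)+\delta_{ik}(\alpha_j\cdot f)+\mx^{[\mathbf e_i]}(\alpha_j\alpha_k\cdot f)$, whence $h\cdot(\mx^{[\mathbf e_i]}f)=2\sum_k h_{ik}(\alpha_k\cdot f)+\mx^{[\mathbf e_i]}(h\cdot f)=2\sum_k h_{ik}(\alpha_k\cdot f)$ because $h\cdot f=0$. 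Applying $g$ and using that $g\cdot f=0$ forces $g\alpha_k\cdot f=\alpha_k\cdot(g\cdot f)=0$, so indeed $gh\cdot(\mx^{[\mathbf e_i]}f)=0$.

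It remains to check that $\mx^{[\mathbf e_1]}f,\dots,\mx^{[\mathbf e_n]}f$ are linearly independent, i.e. that the multiplication map $P_1\to P_4$, $\ell\mapsto\ell f$, is injective whenever $f\neq 0$. After a linear change of coordinates we may assume $\ell=\mx^{[\mathbf e_1]}$; then for $f=\sum_{\ma}f_{\ma}\mx^{[\ma]}$ one has $\mx^{[\mathbf e_1]}f=\sum_{\ma}(a_1+1)f_{\ma}\mx^{[\ma+\mathbf e_1]}$, and since $\ma\mapsto\ma+\mathbf e_1$ is injective and $a_1+1\neq 0$ (this is the only place characteristic $0$ enters), this is nonzero. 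Combining the three steps yields $H(S/I^2)_4=\dim_{\bk}(I_2\cdot I_2)^{\perp}\geq n$.

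I expect the only genuinely delicate point to be the bookkeeping in the iterated Leibniz computation — making sure the degree-$0$ contractions and the error terms $\mx^{[\mathbf e_i]}(\alpha_j\alpha_k\cdot f)$ cancel exactly against $h\cdot f=0$ and, after applying $g$, against $g\cdot f=0$ — whereas the identification of $(I_2\cdot I_2)^{\perp}$ and the linear independence are routine. (Note that the full strength of the hypothesis is not needed: only $I_1=0$, i.e.\ $H(\Apolar(f))_1=n$, is used.)
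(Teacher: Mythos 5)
Your strategy --- dualising to $P_4$ and exhibiting $n$ independent elements of $((I^2)_4)^{\perp}$ --- is clean and essentially self-contained, whereas the paper dispatches this proposition with a bare citation of \cite[Lemma 3.4]{vsp}, so there is no argument in the paper itself to compare against. The reduction $(I^2)_4=I_2\cdot I_2$ (from $I_0=I_1=0$), the duality step, and the Leibniz computation are all correct, and the perpendicularity $gh\cdot(\mx^{[\mathbf e_i]}f)=0$ is in fact characteristic-free despite the cosmetic factor of $2$: even without symmetrising the coefficients of $h$ (which requires $2$ invertible), the calculation shows that $h\cdot(\mx^{[\mathbf e_i]}f)$ is always a $\bk$-linear combination of the $\alpha_k\cdot f$, and any such combination is killed by $g\in I_2$ since $g\alpha_k\cdot f=\alpha_k\cdot(g\cdot f)=0$.

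The one genuine gap, which you flag, is the linear-independence step. You need the multiplication map $P_1\to P_4$, $\ell\mapsto\ell f$, to be injective. In characteristic $0$, and indeed in characteristic $\geq 5$ since $a_1+1\in\{1,2,3,4\}$ for a cubic, this is fine; but it can fail in characteristics $2$ and $3$ even under the hypothesis $I_1=0$. Over $\bk=\mathbb F_2$ with $n=2$, take $f=\mx^{[(2,1)]}$: then $\Apolar(f)$ has Hilbert function $(1,2,2,1)$, yet $\mx^{[(0,1)]}f=\binom{2}{0}\binom{2}{1}\mx^{[(2,2)]}=0$, so your two candidates span only a line. (The proposition itself is not contradicted --- here $H(S/I^2)_4=4\geq 2$ --- your witnesses simply fail to realise the bound.) This matters for the paper: Theorem \ref{thm_main} is stated over an arbitrary field, its proof runs through Propositions \ref{prop_done_0}--\ref{prop_done_2} which invoke Proposition \ref{prop_low_bound}, and Chapter \ref{chap_computer} explicitly targets characteristics $2$ and $3$. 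So a characteristic-free argument for the linear independence, or a different $n$-dimensional subspace of $((I^2)_4)^{\perp}$, is needed to close the gap in the generality the paper uses this statement.
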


\begin{proof}
This follows from \cite[Lemma 3.4]{vsp}.
\end{proof}

\begin{definition}
Let \(f\in P\) be homogeneous of degree \(3\), and let \(I=\Ann(f)\). Then, we say that \(\Apolar(f)\) satisfies the \emph{small tangent space condition} if \(\Apolar(f)\) has Hilbert function \((1,n,n,1)\) and \(H(S/I^2)_4=n\), \(H(S/I^2)_5=0\).
\end{definition}

\begin{proposition} \label{prop_tan_I}
Let \(f\in P\) be homogeneous of degree \(3\). Then, \(\Apolar(f)\) satisfies the small tangent space condition if and only if the tangent space of \(\hilb\) at \(I=\Ann(f)\) has Hilbert function \(n,\binom{n+2}{3}-1,\binom{n+1}{2}-n\) in degrees \(-1,0,1\) respectively, and \(0\) elsewhere.
\end{proposition}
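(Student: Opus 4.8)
The plan is to reduce the statement to a bookkeeping exercise about the Hilbert function of $I/I^2$, using Proposition \ref{prop_hilb_I} as the only nontrivial input. That proposition gives a graded isomorphism $T_I\hilb \cong (I/I^2)^\vee[-3]$; since $(-)^\vee$ reverses the grading and the shift $[-3]$ translates it, this means $H(T_I\hilb)_i = H(I/I^2)_{3-i}$ for every $i \in \mathbb Z$. So both sides of the claimed equivalence should be rephrased purely in terms of the numbers $H(I/I^2)_j$.

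Next I would record where $I$ and $I^2$ are concentrated. Because $f$ is homogeneous of degree $3$, the space $S_{\geq 4}$ annihilates $f$, so $I_j = S_j$ for $j \geq 4$; and since $1 \cdot f = f \neq 0$ we have $I \subseteq S_{\geq 1}$, hence $I^2 \subseteq S_{\geq 2}$. Therefore $H(I/I^2)_1 = \dim_\bk I_1$, while $H(I/I^2)_j = H(S/I^2)_j$ for all $j \geq 4$. The one point that needs a moment's thought is that a single vanishing propagates upward: if $(I^2)_5 = S_5$ then $(I^2)_6 \supseteq S_1 \cdot (I^2)_5 = S_1 \cdot S_5 = S_6$, and inductively $(I^2)_j = S_j$ for all $j \geq 5$; equivalently, $H(S/I^2)_5 = 0$ already forces $H(S/I^2)_j = 0$ for all $j \geq 5$.

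For the forward implication I would assume the small tangent space condition. Then $\Apolar(f)$ has Hilbert function $(1,n,n,1)$, so $I_1 = 0$ and $I^2 \subseteq S_{\geq 4}$; hence $H(I/I^2)_1 = 0$, $H(I/I^2)_2 = \dim_\bk I_2 = \binom{n+1}{2} - n$, and $H(I/I^2)_3 = \dim_\bk I_3 = \binom{n+2}{3} - 1$. For $j \geq 4$ we have $H(I/I^2)_j = H(S/I^2)_j$, which equals $n$ at $j = 4$ and $0$ for $j \geq 5$ by the observation above; and $I/I^2$ vanishes in nonpositive degrees. Plugging the resulting Hilbert function of $I/I^2$ into $H(T_I\hilb)_i = H(I/I^2)_{3-i}$ produces exactly the asserted Hilbert function $n, \binom{n+2}{3}-1, \binom{n+1}{2}-n$ in degrees $-1,0,1$ and $0$ elsewhere.

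The converse runs the same computation backwards. Assuming $T_I\hilb$ has the stated Hilbert function, degree $i = 2$ gives $H(I/I^2)_1 = 0$, so $I_1 = 0$ and $I^2 \subseteq S_{\geq 4}$; degrees $i = 1$ and $i = 0$ then read off $\dim_\bk I_2 = \binom{n+1}{2} - n$ and $\dim_\bk I_3 = \binom{n+2}{3} - 1$, i.e. $H(S/I)_2 = n$ and $H(S/I)_3 = 1$, which together with $H(S/I)_0 = 1$ and $H(S/I)_{\geq 4} = 0$ gives Hilbert function $(1,n,n,1)$; finally degrees $i = -1$ and $i = -2$ give $H(S/I^2)_4 = H(I/I^2)_4 = n$ and $H(S/I^2)_5 = H(I/I^2)_5 = 0$, the remaining part of the condition. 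There is really no hard step here; the only thing I would be careful about is not mixing up the grading reversal and the shift coming from $(I/I^2)^\vee[-3]$.
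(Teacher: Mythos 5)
Your proof is correct and follows essentially the same route as the paper: reduce everything to the Hilbert function of $I/I^2$ via the isomorphism $T_I\hilb\cong (I/I^2)^{\vee}[-3]$ and do the degree bookkeeping, including the observation that $H(S/I^2)_5=0$ propagates to all higher degrees. The only cosmetic difference is in the converse, where you read $H(S/I)_2$ and $H(S/I)_3$ directly off degrees $0$ and $1$ of the tangent space, while the paper recovers them from $H(S/I)_1=n$ via Gorenstein symmetry; both are fine.
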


\begin{proof}
First suppose that \(\Apolar(f)\) satisfies the small tangent space condition. In view of Proposition \ref{prop_hilb_I} we need to compute the Hilbert function of \(I/I^2\). The ring \(S/I\) has Hilbert function \((1,n,n,1)\), so \(I_{\leq 1}=0\). It follows that \(S/I^2\) is all of \(S\) in degrees \(0,1,2\), and 3. Furthermore, since \(S/I\) satisfies the small tangent space condition \(S_5\) is contained in \(I^2\), so \(S_{\geq 6}\) is contained in \(I^2\) as well, which means that \(H(S/I^2)_{\geq 6}\) is \(0\). Thus, the Hilbert function of \(S/I^2\) is \((1,n,\binom{n+1}{2},\binom{n+2}{3},n)\). Furthermore, since \(S/I\) has Hilbert function \((1,n,n,1)\), we get that the Hilbert function of \(I/I^2\) is equal to \((0,0,\binom{n+1}{2}-n,\binom{n+2}{3}-1,n)\), hence the tangent space \(T_I\hilb\cong (I/I^2)^{\vee}[-3]\) has the desired Hilbert function.

Now suppose that \(T_I\hilb\) has the given Hilbert function. Then, \(I/I^2\) has Hilbert function \((0,0,\binom{n+1}{2}-n,\binom{n+2}{3}-1,n)\). Since \(S_{\geq4}\subset I\) this means that \(H(S/I^2)_4=n\) and \(H(S/I^2)_5=0\). Moreover, since \(I^2\) is not all of \(S\) we have \(H(I)_0=0\), so \(H(I^2)_1=0\). Thus, \(H(S/I)_0=1\) and \(H(S/I)_1=n\), which since \(S/I\) is Gorenstein implies that \(S/I\) has Hilbert function \((1,n,n,1)\) as required.
\end{proof}

\begin{corollary} \label{cor_small_tan_hilb}
Let \(f\in P\) be homogeneous of degree 3 such that \(\Apolar(f)\) has Hilbert function \((1,n,n,1)\). Then, \(\Apolar(f)\) satisfies the small tangent space condition if and only if the tangent space of \(\hilb\) at \(\Ann(f)\) has the smallest Hilbert function possible.
\end{corollary}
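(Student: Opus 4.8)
The plan is to read the corollary off Proposition \ref{prop_tan_I}. That proposition already identifies the small tangent space condition with $T_I\hilb$ having one completely explicit Hilbert function, namely $n,\binom{n+2}{3}-1,\binom{n+1}{2}-n$ in degrees $-1,0,1$ and $0$ elsewhere; so all that is left is to verify that this explicit function is the smallest Hilbert function that $T_{\Ann(f)}\hilb$ can have among $f$ with $H(\Apolar(f))=(1,n,n,1)$, i.e.\ that it is a componentwise lower bound.

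To see this I would revisit the Hilbert function of $T_I\hilb\cong (I/I^2)^{\vee}[-3]$ (Proposition \ref{prop_hilb_I}) for any such $f$. The Hilbert function $(1,n,n,1)$ forces $I_{\leq 1}=0$, hence $I^2\subseteq S_{\geq 4}$, so $S/I^2$ agrees with $S$ in degrees $0,1,2,3$; a degree-by-degree count of $I/I^2$, exactly as in the proof of Proposition \ref{prop_tan_I}, then gives $H(T_I\hilb)_0=\binom{n+2}{3}-1$, $H(T_I\hilb)_1=\binom{n+1}{2}-n$, $H(T_I\hilb)_i=0$ for $i\geq 2$, and $H(T_I\hilb)_{-j}=H(S/I^2)_{j+3}$ for $j\geq 1$. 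The first three values depend only on the Hilbert function $(1,n,n,1)$, while the last family is bounded below by Proposition \ref{prop_low_bound} (which gives $H(S/I^2)_4\geq n$) together with the trivial inequality $H(S/I^2)_j\geq 0$. Thus $H(T_I\hilb)$ dominates, componentwise, the explicit function of Proposition \ref{prop_tan_I}.

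It remains to assemble the equivalence. The Hilbert function of $T_I\hilb$ attains this componentwise minimum precisely when it attains it in the negative degrees, i.e.\ precisely when $H(S/I^2)_4=n$ and $H(S/I^2)_{\geq 5}=0$; and since $S/I^2$ is generated in degree $1$, the single equality $H(S/I^2)_5=0$ already forces the vanishing in all higher degrees, so this is exactly the small tangent space condition. Together with Proposition \ref{prop_tan_I} this proves the corollary. I do not expect any genuine obstacle: the content is entirely contained in Propositions \ref{prop_hilb_I}, \ref{prop_low_bound} and \ref{prop_tan_I}, and the only point needing a word of care is the meaning of ``smallest Hilbert function possible'', which I read as the componentwise lower bound exhibited above (and which, for the values of $n$ treated later in the paper, is in fact attained).
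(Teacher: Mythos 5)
Your argument is correct and is exactly the (implicit) reasoning behind the paper's corollary: the paper gives no separate proof, treating the statement as an immediate consequence of Proposition \ref{prop_tan_I} together with Proposition \ref{prop_low_bound}, which is precisely what you spell out. One tiny phrasing slip: you say ``$S/I^2$ is generated in degree $1$,'' but what you mean (and use) is that $S/I^2$ is a cyclic graded $S$-module, so $(S/I^2)_{j+1}=S_1\cdot(S/I^2)_j$ and the vanishing of one degree forces vanishing of all higher degrees; the conclusion is unaffected.
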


\begin{proposition} \label{prop_general}
Let \(f\in P\) be homogeneous of degree \(3\) such that \(\Apolar(f)\) satisfies the small tangent space condition. Then, for a general \(g\in P\) homogeneous of degree \(3\) the apolar algebra \(\Apolar(g)\) satisfies the small tangent space condition.
\end{proposition}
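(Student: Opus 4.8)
The plan is to realize the small tangent space condition as a nonempty open condition on the space of cubics $V=\Spec\Sym(P_3)^{\vee}$: the conclusion ``for a general $g$ \dots'' follows once we exhibit a dense open $W\subseteq V$ all of whose rational points satisfy the condition, and since $V$ is an irreducible affine space it suffices to produce such an open $W$ with $f\in W$. Fix a monomial basis of $P_3$, so a cubic $g$ is a tuple of coordinates on $V=\mathbb A^{N}_{\bk}$, $N=\binom{n+2}{3}$. For each $d$, contraction by $g$ is a linear map $\phi^{(d)}_g\colon S_d\to P_{3-d}$, $s\mapsto s\cdot g$, with $\Ann(g)_d=\ker\phi^{(d)}_g$; in fixed monomial bases its matrix has entries that are \emph{linear} in the coordinates of $g$. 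Thus $\dim\Ann(g)_d=\dim S_d-\operatorname{rank}\phi^{(d)}_g$, and ``$\operatorname{rank}\phi^{(d)}_g\ge r$'' is an open (determinantal) condition on $V$.

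I would first take $U\subseteq V$ to be the open set where $\operatorname{rank}\phi^{(1)}_g\ge n$, $\operatorname{rank}\phi^{(2)}_g\ge n$ and $g\ne 0$; on $U$ these ranks are then forced to equal $n$, $n$, $1$, so $\Apolar(g)$ has Hilbert function $(1,n,n,1)$ (cf.\ Proposition \ref{prop_gen_hilbert_fun}), $\Ann(g)_0=\Ann(g)_1=0$, and $\dim\Ann(g)_2$, $\dim\Ann(g)_3$ are constant on $U$. By this constancy of rank, over an affine cover of $U$ the kernels $\Ann(g)_2$ and $\Ann(g)_3$ admit bases whose coefficients are regular functions of $g$; equivalently they form vector subbundles of $S_2\otimes\mathcal O_U$ and $S_3\otimes\mathcal O_U$. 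Establishing this local freeness is the one genuinely structural point — everything after it is routine semicontinuity of ranks.

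Next I would pin down $(\Ann(g)^2)_4$ and $(\Ann(g)^2)_5$ for $g\in U$. Since $\Ann(g)_0=\Ann(g)_1=0$, a degree count gives $(\Ann(g)^2)_4=\Ann(g)_2\cdot\Ann(g)_2$, and, using that $\Ann(g)$ is an ideal, $(\Ann(g)^2)_5=\Ann(g)_2\cdot\Ann(g)_3$. These are the images of the bundle maps $\Ann(g)_2\otimes\Ann(g)_2\to S_4\otimes\mathcal O_U$ and $\Ann(g)_2\otimes\Ann(g)_3\to S_5\otimes\mathcal O_U$ induced by multiplication in $S$; writing them as matrices of regular functions and using lower semicontinuity of the rank shows that $\{g\in U:\dim(\Ann(g)^2)_4\ge\binom{n+3}{4}-n\}$ and $\{g\in U:\dim(\Ann(g)^2)_5\ge\binom{n+4}{5}\}$ are open in $V$. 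But on $U$ we always have $\dim(S/\Ann(g)^2)_4\ge n$ by Proposition \ref{prop_low_bound}, i.e.\ $\dim(\Ann(g)^2)_4\le\binom{n+3}{4}-n$, and trivially $\dim(\Ann(g)^2)_5\le\dim S_5=\binom{n+4}{5}$; so on those two open sets the inequalities become equalities, namely $H(S/\Ann(g)^2)_4=n$ and $H(S/\Ann(g)^2)_5=0$.

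Finally, let $W$ be the intersection of these two open sets. Every rational point $g$ of $W$ lies in $U$, so $\Apolar(g)$ has Hilbert function $(1,n,n,1)$, and it satisfies $H(S/\Ann(g)^2)_4=n$, $H(S/\Ann(g)^2)_5=0$ — that is, $\Apolar(g)$ satisfies the small tangent space condition. By hypothesis $f\in W$, so $W$ is nonempty, hence dense in the irreducible $V$, which finishes the proof. The main obstacle is the second paragraph: proving that the apolar ideal in each fixed degree varies in a locally free family over $U$, which is precisely what the constancy of $\operatorname{rank}\phi^{(d)}_g$ on $U$ delivers.
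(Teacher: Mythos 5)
Your argument is correct, but it takes a genuinely different route from the paper's. The paper reformulates the small tangent space condition via Corollary \ref{cor_small_tan_hilb} as minimality of the Hilbert function of the tangent space $T_I\hilb\cong(I/I^2)^\vee[-3]$, cites the universal family $Z\to U$ of apolar algebras over the locus $U$ of cubics with Hilbert function $(1,n,n,1)$ (from \cite{vsp}), and then invokes upper semicontinuity of fibre dimension for the conormal sheaf $I(Z)/I(Z)^2$ in one stroke. You instead bypass the Hilbert scheme entirely and realize every relevant dimension as the rank of an explicit matrix of regular functions on $\Spec\Sym(P_3)^{\vee}$: the contraction matrices $\phi^{(d)}_g$ cut out $U$ as an open determinantal locus (on which $\operatorname{rank}\phi^{(d)}_g$ is constant, so $\ker\phi^{(d)}_g=\Ann(g)_d$ is a subbundle of $S_d\otimes\mathcal O_U$), and the multiplication maps $\Ann(g)_2\otimes\Ann(g)_2\to S_4$ and $\Ann(g)_2\otimes\Ann(g)_3\to S_5$ have images $(\Ann(g)^2)_4$ and $(\Ann(g)^2)_5$ — the degree bookkeeping here is justified precisely because $\Ann(g)_{\le 1}=0$ on $U$. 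Both proofs reduce to semicontinuity of rank over an irreducible base and both lean on Proposition \ref{prop_low_bound} (you explicitly, the paper implicitly through the ``smallest Hilbert function possible'' phrasing of Corollary \ref{cor_small_tan_hilb}). What your version buys is a self-contained, linear-algebraic argument needing neither Theorem \ref{thm_tangent}, Proposition \ref{prop_hilb_I}, nor the reference to \cite{vsp}; what it costs is that you must carry out the local-freeness and degree-splitting verifications that the tangent-space package makes invisible.
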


\begin{proof}
Let \(U\) be the open subscheme of \(\Spec\Sym(P_3)^{\vee}\) from Proposition \ref{prop_gen_hilbert_fun}. By \cite[Section 2.2]{vsp} there is a family \(Z\subset U\times \mathbb A^n_{\bk} \to U\) such that the fibre over \(f\in U(\bk)\) is \(\Spec\Apolar(f)\). Hence, the claim follows from Corollary \ref{cor_small_tan_hilb} and upper semi-continuity of rank for the quasicoherent sheaf \(I(Z)/I(Z)^2\).
\end{proof}

\subsection{Elementary components} \label{sec_elementary}
\hfill

We now describe the connection between the small tangent space condition and smooth points on elementary components of the Hilbert scheme.

As in section \ref{sec_small_tan}, we only consider degree 3 homogeneous elements of \(P\).

\begin{definition}
An irreducible component \(Z\) of the Hilbert scheme is called \emph{elementary} if for all rational points \(I\in Z(\bk)\) the \(S\)-module \(S/I\) is supported at a single point.
\end{definition}

\begin{proposition} \label{prop_elem_small}
Let \(f\in P\) be homogeneous of degree 3. If \(\Apolar(f)\) satisfies the small tangent space condition, then \(I=\Ann(f)\) is a smooth point of an elementary component of \(\hilb\).
\end{proposition}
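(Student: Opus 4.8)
The plan is to show that the local ring $\Apolar(f)$ is, up to isomorphism, the only subscheme structure that can appear in a neighbourhood of $I=\Ann(f)$ in $\hilb$, and then argue by a dimension count that $I$ is a smooth point of the component it lies on. First I would invoke Proposition \ref{prop_tan_I}: the small tangent space condition forces $T_I\hilb$ to have Hilbert function $n,\binom{n+2}{3}-1,\binom{n+1}{2}-n$ in degrees $-1,0,1$, so in particular $\dim_\bk T_I\hilb = n+\binom{n+2}{3}-1+\binom{n+1}{2}-n = \binom{n+2}{3}+\binom{n+1}{2}-1$. The degree-$(-1)$ part, of dimension $n$, is exactly the tangent space to the orbit of translations (moving the support point in $\mathbb A^n$), and the degree-$0$ part, of dimension $\binom{n+2}{3}-1 = \dim \mathbb{P}(P_3)$, is the tangent space to the family of apolar algebras of cubics with Hilbert function $(1,n,n,1)$ coming from Proposition \ref{prop_gen_hilbert_fun}.

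The key step is to identify the component through $I$. Consider the locus $W\subset\hilb$ consisting of ideals of the form (translate of) $\Ann(g)$ for $g\in P$ homogeneous of degree $3$ with $\Apolar(g)$ of Hilbert function $(1,n,n,1)$; by the family $Z\subset U\times\mathbb A^n$ used in Proposition \ref{prop_general}, together with the $\mathbb A^n$ worth of translations, $W$ is an irreducible locally closed subset of $\hilb$ of dimension $n + \dim\mathbb{P}(P_3) = n + \binom{n+2}{3}-1$. I would then compare this with the tangent space: the degree-$\le 0$ part of $T_I\hilb$ has dimension exactly $n+\binom{n+2}{3}-1 = \dim W$, so $W$ accounts for all tangent directions of degree $\le 0$. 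The remaining degree-$1$ part of $T_I\hilb$, of dimension $\binom{n+1}{2}-n$, must be explained as well; here I would use that every element of $\Hom_S(I,S/I)$ of positive degree — equivalently every first-order deformation increasing the socle degree — is obstructed, which is precisely what Proposition \ref{prop_low_bound} together with $H(S/I^2)_4=n$ guarantees: the bound is sharp, so there is no room for such deformations to be unobstructed. More carefully, I would show that the closure $\overline W$ is a component and that $\dim_I\overline W$ plus the dimension of the "rigid" positive-degree directions equals $\dim_\bk T_I\hilb$, forcing $\overline W$ to be a component on which $I$ is smooth.

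Concretely, the cleanest route is: (1) $\overline{W}$ is irreducible of dimension $d:=n+\binom{n+2}{3}-1$; (2) at the graded point $I$ one shows $\dim_\bk T_I\overline W = d$ as well, so $I$ is a smooth point of $\overline W$ and $\overline W$ is honestly a component; (3) since $\overline W$ is cut out inside $\hilb$ and $I\in\overline W$ is smooth, $I$ is a smooth point of $\hilb$ lying on the unique component $\overline W$; (4) elementarity of $\overline W$: every $\Apolar(g)$ appearing (and its translates) is a local Artinian algebra, hence supported at one point, and this is a closed condition stable under the specialization to any point of $\overline W$, so $\overline W$ is elementary. Steps (1) and (4) are essentially bookkeeping given the family $Z$; the substantive point is step (2), the equality $\dim_\bk T_I\overline W = \dim \overline W$, which is where the numerics of Proposition \ref{prop_tan_I} — and ultimately the sharpness in Proposition \ref{prop_low_bound} — get used.

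The main obstacle I anticipate is making precise that the positive-degree part of $T_I\hilb$ does not come from any component other than $\overline W$, i.e. that those $\binom{n+1}{2}-n$ tangent directions genuinely lie in the tangent space of $\overline W$ rather than pointing out of it. Equivalently, one needs $T_I\overline W$ to be all of $T_I\hilb$, not a proper subspace; this requires either an explicit parametrisation of $\overline W$ near $I$ whose differential surjects onto $T_I\hilb$, or a comparison of Hilbert functions degree by degree showing $T_I\overline W$ already has the Hilbert function computed in Proposition \ref{prop_tan_I}. I expect this to be handled by exhibiting, for each basis vector of the degree-$1$ part of $T_I\hilb$, an explicit curve in $\overline W$ through $I$ realising it — using the $\mathbb G_m$-action by scaling, which degenerates general cubics to $f$ and whose orbit contributes exactly the needed positive-weight directions — so that no first-order deformation of $I$ escapes $\overline W$.
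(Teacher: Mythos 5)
The proposal contains a fatal dimension mismatch that undermines your steps (2) and (3). You compute correctly that $\dim_\bk T_I\hilb = \binom{n+2}{3}+\binom{n+1}{2}-1$, while your locus $W$ of translates of graded apolar ideals has dimension $n + \binom{n+2}{3}-1$. The difference is $\binom{n+1}{2}-n=\binom{n}{2}$, which is strictly positive for $n\geq 2$. Thus $\overline W$ is \emph{strictly smaller} than $T_I\hilb$ in dimension, so $\overline W$ cannot be the irreducible component through $I$, and showing that $I$ is a smooth point of $\overline W$ says nothing about smoothness in $\hilb$. Your step (3), that smoothness on a closed subvariety $\overline W$ implies smoothness on $\hilb$ and that $\overline W$ is a component, is a non sequitur. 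Worse, your proposed fix in the last paragraph -- requiring $T_I\overline W = T_I\hilb$ -- is directly incompatible with your own step (2): if $I$ is a smooth point of $\overline W$ then $\dim_\bk T_I\overline W = \dim\overline W = n+\binom{n+2}{3}-1$, but $\dim_\bk T_I\hilb$ exceeds this by $\binom{n}{2}$. The missing $\binom{n}{2}$ directions are real and come from \emph{non-graded} Gorenstein deformations: apolar algebras of inhomogeneous polynomials $f_3+f_2$ with $f_2$ quadratic. These are not translates of graded apolar ideals, so they are not in your $W$, and they fill out the genuine component.

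The paper takes an entirely different route which sidesteps the parametrization problem. It reads off from Proposition \ref{prop_tan_I} that $\dim_\bk\Hom_S(I,S/I)_{<0}=n$, i.e.\ the space of negative-weight tangent directions is exactly the $n$-dimensional space of translations; this is the ``trivial negative tangents'' (TNT) condition of Jelisiejew, and \cite[Theorem 4.5, Corollary 4.7]{jelelem} then asserts that every component through $I$ is elementary. Smoothness is obtained by the dimension-counting argument in \cite[Proof of Lemma 6.21]{iarrobino}, which does account for the nongraded deformations you are missing. If you wanted to carry out your parametrization approach honestly, you would need to replace $W$ by the locus of translates of $\Ann(f_3+f_2)$ for all cubics $f_3$ with $(1,n,n,1)$ Hilbert function and all quadratics $f_2$, and verify (a nontrivial point) that distinct pairs give distinct ideals often enough for the dimension of this larger locus to equal $\dim_\bk T_I\hilb$. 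As written, the proposal does not prove the statement.
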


\begin{proof}
Since \(\Apolar(f)\) satisfies the small tangent space condition, by Proposition \ref{prop_tan_I}, we have \(\dim_{\bk}\Hom_S(I,S/I)_{<0}=n\). Hence, by \cite[Theorem 4.5 and Corollary 4.7]{jelelem}, all irreducible components containing \(I\) are elementary. Smoothness follows from the discussion in \cite[Proof of Lemma 6.21]{iarrobino}.
\end{proof}

\section{Small tangent space condition in degree 3} \label{chp_theorem}

In this chapter, we prove Theorem \ref{thm_main}, which confirms Conjecture \ref{conj} in the case where \(d=3\) and \(n\geq 18\).

\begin{theorem} \label{thm_main}
Let \(S\) be a polynomial ring of \(n\) variables over a field \(\bk\), and let \(P\) be its associated divided power ring. If \(n\geq 18\), then for a general \(f\in P\) homogeneous of degree \(3\) the apolar algebra \(\Apolar(f)\) satisfies the small tangent space condition.
\end{theorem}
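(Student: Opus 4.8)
The plan is to reduce the statement to the existence of a single explicit cubic with the small tangent space condition for each $n\geq 18$, via Proposition \ref{prop_general}, and then to exhibit such cubics by a direct, characteristic-free computation. By Proposition \ref{prop_general}, it suffices to produce, for every $n\geq 18$, one homogeneous $f\in P$ of degree $3$ with $\Apolar(f)$ satisfying the small tangent space condition. Since $n\geq 18$ means $n$ is congruent to $0$, $1$, or $2$ modulo $3$ and is large, I would split into these three residue classes and, in each, write down a polynomial built blockwise out of the ``pattern'' $F=\sum_{i=1}^m a_ib_ic_i + a_ia_{i+1}^2 + b_ib_{i+1}^2 + c_ic_{i+1}^2$ advertised in the introduction, adjusting the number of extra variables by a bounded amount (e.g.\ adding one or two further blocks or terms) to accommodate $n\equiv 1,2$. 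The role of the hypothesis $n\geq 18$ is precisely that the blocks are numerous enough that the ``boundary effects'' at $i=1$ and $i=m$ do not spoil genericity of the Hilbert function or the tangent space dimension.

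For a fixed such $f$, I would first verify that $\Apolar(f)$ has Hilbert function $(1,n,n,1)$: since $f$ is a cubic with full linear span (each variable appears), $H(S/I)_0=1$ and $H(S/I)_1=n$ are immediate, and Gorenstein symmetry (Theorem \ref{thm_gor_class}) forces $H(S/I)_2=n$, $H(S/I)_3=1$ — one only needs that no quadratic annihilates $f$, i.e.\ that the catalecticant (contraction) map $S_1\to P_2$, $\alpha\mapsto\alpha\cdot f$ is injective, which for the block polynomial is a finite local check that propagates over the blocks. Then the heart is to show $H(S/I^2)_4=n$ and $H(S/I^2)_5=0$. By Proposition \ref{prop_low_bound} the inequality $H(S/I^2)_4\geq n$ is automatic, so I must produce $n$ linearly independent elements of $S_4$ modulo $I^2$ and show they span, and separately show $S_5\subseteq I^2$. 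Equivalently, by Proposition \ref{prop_tan_I}, I must show $\dim_\bk(I/I^2)_4 = n$ and $(I/I^2)_5=0$; since $S_{\geq 4}\subseteq I$, this is the statement $\dim_\bk (I^2)_4 = \binom{n+3}{4} - n$ and $(I^2)_5 = S_5$.

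The key computational input is a description of $I=\Ann(f)$ by generators. For the monomial-and-binomial $f$ above, the quadratic generators of $I$ (there are $\binom{n+1}{2}-n$ of them) are themselves monomials and binomials in the $\alpha$'s, read off from which quadratic contractions kill $f$; this makes $I^2$ in degrees $4$ and $5$ a concrete span of products of these explicit quadrics. I would organize the counting by the block structure: monomials of $S_4$ fall into a bounded list of ``types'' according to how their support meets consecutive blocks, and for each type one determines its class modulo $I^2$ once and for all, the answer being independent of which blocks are involved as long as the blocks are interior (hence the $n\geq 18$ threshold, leaving room for a fixed number of boundary blocks). Summing the per-type contributions gives an exact count $H(S/I^2)_4=n$ and collapses $H(S/I^2)_5$ to $0$. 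I expect the main obstacle to be exactly this bookkeeping: proving that the $\binom{n+3}{4}-n$ claimed independent elements of $(I^2)_4$ really are independent (no unexpected linear relations among products of the explicit quadrics) and that every degree-$5$ monomial lies in $I^2$, uniformly in $n$. This is where the cleanest approach is to pass to the monomial/binomial combinatorics, possibly grading $S$ by a finer multidegree adapted to the blocks so that the relevant matrices become block-diagonal and the independence reduces to a single finite-dimensional linear-algebra fact checked once — a fact valid in every characteristic, which is what lets the characteristic-$0$ hypothesis be dropped for $n\geq 18$.
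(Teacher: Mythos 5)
Your overall strategy coincides with the paper's: reduce, via Proposition \ref{prop_general}, to exhibiting one explicit cubic per $n$; split into the three residue classes of $n$ mod $3$; and use the cyclic block polynomial $\Fa=\sum_i a_ib_ic_i + a_ia_{i+1}^2 + b_ib_{i+1}^2 + c_ic_{i+1}^2$ (with one or two auxiliary variables $d,e$ and extra terms $a_ib_{i+1}d$, $b_ic_{i+1}e$ for $n\equiv 1,2$). The verification really does proceed by explicit monomial/binomial rewriting modulo $\Ia^2$, and it is characteristic-free, so in broad outline you have reconstructed the proof.

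Two points are off, though. First, the worry you flag as the ``main obstacle'' --- certifying that $\binom{n+3}{4}-n$ elements of $(\Ia^2)_4$ are linearly independent --- is not something the paper ever needs to prove, and trying to prove it directly is the wrong direction. You already cite Proposition \ref{prop_low_bound}, which gives $H(\Sa/\Ia^2)_4\geq n$ for free; the only inequality that requires work is $H(\Sa/\Ia^2)_4\leq n$, i.e.\ a \emph{spanning} statement. The paper proves it by showing that every degree-$4$ monomial lies in $\Ja=\Ia^2 + \langle a_ia_{i+1}a_{i+2}^2,\dots\rangle$ (Lemmas \ref{lem_index_0}--\ref{lem_i_plus_one}, Proposition \ref{prop_0} and its analogues), which is a one-sided inclusion with $n$ added generators, so $H(\Sa/\Ia^2)_4\leq n$ and equality follows. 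No independence count among products of the quadratic generators is ever carried out; if you structure the argument so that independence must be verified by hand, you are creating a problem the setup has already solved. Second, your explanation of the threshold $n\geq 18$ via ``boundary effects at $i=1$ and $i=m$'' misreads the construction: the indices are taken cyclically mod $m$, so there are no boundary blocks at all. What $m\geq 6$ actually buys is that the shifts $i-3,\dots,i+3$ used in Lemma \ref{lem_index_0} to manufacture auxiliary products like $(x_iz_{i+2})(y_jw_{i+3})\in\Ia^2$ stay distinct mod $m$; for smaller $m$ these cyclic indices collide and the rewritings break down.
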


\begin{proof}
In view of Proposition \ref{prop_general} it suffices to find, for each \(n\geq 18\), a single \(f\in P\) such that \(\Apolar(f)\) satisfies the small tangent space condition. We divide the proof into three cases: \(n=3m\), \(n=3m+1\), and \(n=3m+2\), where \(m\geq 6\). They are resolved by Propositions \ref{prop_done_0}, \ref{prop_done_1}, and \ref{prop_done_2} respectively.
\end{proof}

\begin{corollary}
Let \(S\) be a polynomial ring of \(n\) variables over a field \(\bk\), and let \(P\) be its associated divided power ring. If \(n\geq 18\), then for a general \(f\in P\) homogeneous of degree \(3\) the ideal \(\Ann(f)\) is a smooth point of an elementary component of the Hilbert scheme.
\end{corollary}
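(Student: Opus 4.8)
The plan is to deduce the Corollary directly from Theorem~\ref{thm_main} together with Proposition~\ref{prop_elem_small}. The chain of implications is short: Theorem~\ref{thm_main} tells us that for $n\geq 18$ and for a general homogeneous $f\in P$ of degree $3$, the apolar algebra $\Apolar(f)$ satisfies the small tangent space condition. Proposition~\ref{prop_elem_small} then says that whenever $\Apolar(f)$ satisfies the small tangent space condition, the ideal $I=\Ann(f)$ is a smooth point of an elementary component of $\hilb$. Composing these gives exactly the assertion of the Corollary.

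The only point requiring a word of care is the word ``general''. Theorem~\ref{thm_main} asserts the small tangent space condition holds on some dense open subset $V\subseteq \Spec\Sym(P_3)^{\vee}$ of the space of cubics (this is how ``general'' is used throughout the paper, cf.\ Proposition~\ref{prop_general} and its proof via upper semicontinuity). For every rational point $f\in V(\bk)$ the hypothesis of Proposition~\ref{prop_elem_small} is satisfied, hence $\Ann(f)$ is a smooth point of an elementary component. Thus the same open set $V$ witnesses the conclusion of the Corollary, and ``general'' in the statement of the Corollary can be taken to mean exactly ``lying in $V$''. No new open set needs to be constructed.

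I do not anticipate a genuine obstacle here, since the Corollary is a formal consequence of results already established in the excerpt; the ``hard part'' is entirely contained in Theorem~\ref{thm_main} (equivalently, in Propositions~\ref{prop_done_0}, \ref{prop_done_1}, \ref{prop_done_2}, which exhibit the explicit polynomials case by case). If one wanted the proof to be fully self-contained one would also recall why the small tangent space condition forces smoothness: by Proposition~\ref{prop_tan_I} it pins down the entire Hilbert function of $T_I\hilb$, in particular $\dim_\bk T_I\hilb = n+\binom{n+2}{3}-1+\binom{n+1}{2}-n$, which matches the dimension of the elementary component containing $I$ (as in the cited argument of Iarrobino--Kanev), and a point where the tangent space has the expected dimension is smooth. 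But for the write-up it suffices to cite Theorem~\ref{thm_main} and Proposition~\ref{prop_elem_small} and observe that their composition yields the claim.

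\begin{proof}
By Theorem~\ref{thm_main}, if $n\geq 18$ there is a dense open subset $V$ of $\Spec\Sym(P_3)^{\vee}$ such that for every $f\in V(\bk)$ the apolar algebra $\Apolar(f)$ satisfies the small tangent space condition. For such $f$, Proposition~\ref{prop_elem_small} shows that $\Ann(f)$ is a smooth point of an elementary component of $\hilb$. Hence the conclusion holds for every $f$ in the dense open set $V$, that is, for a general homogeneous $f\in P$ of degree $3$.
\end{proof}
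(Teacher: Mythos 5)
Your proof is correct and takes exactly the same route as the paper, which simply combines Theorem~\ref{thm_main} with Proposition~\ref{prop_elem_small}. The extra remarks about the meaning of ``general'' are accurate but not strictly necessary, since both statements use the word in the same sense.
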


\begin{proof}
This follows by combining Proposition \ref{prop_elem_small} and Theorem \ref{thm_main}.
\end{proof}

\subsection{Proof of Theorem \ref*{thm_main}; case \(n=3m\)}
\hfill

Let \(\Sa=\bk[a_i,b_i,c_i]_{i=1}^m\) be a polynomial ring of \(n=3m\) variables. Recall that we assume \(m\geq 6\). When writing indices we treat them modulo \(m\).

Consider the following polynomial
\[\Fa=\sum_{i=1}^m a_ib_ic_i + a_ia_{i+1}^2 + b_ib_{i+1}^2 + c_ic_{i+1}^2.\]
Also by \(\Fa\) we denote its dual element in the divided power ring associated to \(\Sa\).

Let \(\Ia\) be the smallest ideal such that the following remark holds.

\begin{remark}
For all \(x,y\in\{a,b,c\}\), and for all indices \(i,j\), if \(j\notin\{i-1,i,i+1\}\), then \(x_iy_j\in \Ia\).

For all \(x,y\in\{a,b,c\}\), and for all indices \(i,j\), if \(x\ne y\), then \(x_iy_{i+1}-x_jy_{j+1}\in \Ia\).

For all \(x,y\in\{a,b,c\}\), and for all indices \(i,j\), one of the following holds:
\begin{enumerate}
\item \(x_iy_j\) is contained in \(\Ia\).
\item \(x\ne y\) and \(j\in\{i-1,i+1\}\).
\item There exists an index \(k\) and \(p,q\in\{a,b,c\}\), \(p\ne q\), such that \(x_iy_j-p_kq_k\in \Ia\).
\end{enumerate}
\end{remark}

The polynomial \(\Fa\) is chosen such that \(\Ia\subset\Ann\,\Fa\).

Let \(\Ja\) denote the ideal \(\Ia^2 + \langle a_ia_{i+1}a_{i+2}^2,b_ib_{i+1}b_{i+2}^2,c_ic_{i+1}c_{i+2}^2 \,|\, i=1,...,m\rangle\). Note that \(\Fa,\Ia\), and \(\Ja\) are invariant under index translation and permutations of the set \(\{a,b,c\}\).

We want to show that \(\Apolar(\Fa)\) satisfies the small tangent space condition. The main part of the proof is checking that all polynomials of degree \(4\) are contained in \(\Ja\), hence that \(H(\Sa/\Ia^2)_4=n\).

In this section, we use the following notation. For polynomials \(Q,R\in \Sa\) we write \(Q\equiv R\) if \(Q\) is equal to \(R\) in \(\Sa/\Ia^2\).

\begin{lemma} \label{lem_index_0}
For all \(x,y,z,w,p,q\in\{a,b,c\}\), and for all indices \(i,j,k\),
\begin{enumerate}
\item there exists an index \(t\) such that \(x_iy_jz_tw_{t+1}\) is in \(\Ia^2\).
\item there exist indices \(s,t\) such that each of \(x_iy_{i+1}z_tw_{t+1}\), \(x_sy_{s+1}z_jw_{j+1}\), and \(x_sy_{s+1}z_tw_{t+1}\) is in \(\Ia^2\).
\item if \(j,k\in\{i-1,i,i+1\}\), then there exists an index \(s\) such that each of \(x_iy_jz_sw_{s+1}\) and \(z_sw_{s+1}p_kq_k\) is in \(\Ia^2\).
\end{enumerate}
\end{lemma}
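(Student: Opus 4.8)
The plan is to exploit the fact that $\Ia$ contains a large supply of "spread-out" monomials: by the first bullet of the Remark, $x_i y_j \in \Ia$ whenever $j \notin \{i-1,i,i+1\}$, and by the third bullet, for any $x,y$ and any $i,j$ the element $x_i y_j$ is either in $\Ia$ or congruent modulo $\Ia$ to some $p_k q_k$ with $p \ne q$. So every degree-$2$ monomial is, modulo $\Ia$, either zero or a "diagonal" binomial partner $p_k q_k$. Since $\Ia^2$ is generated by products $uv$ with $u,v \in \Ia$ (of degree $1$ or $2$; the degree-$1$ generators only exist in case $H(S/I)_1 < n$, but we may ignore them), it suffices to produce, for each stated product of four linear forms, a way of grouping the four factors into two pairs each of which lands in $\Ia$.

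For part (1): given $x_i, y_j$ fixed, I choose $t$ far from both $i$ and $j$ in the cyclic order — concretely any $t$ with $t, t+1 \notin \{i-1,i,i+1,j-1,j,j+1\}$, which exists because $m \ge 6$ leaves room (we need to avoid at most $8$ residues $t$ and $t{+}1$ can hit, against $m$ choices; $m\ge 6$ may need a slightly more careful count, but the idea is that the cycle is long enough). Then $x_i z_t \in \Ia$ (since $t \notin \{i-1,i,i+1\}$) and $y_j w_{t+1} \in \Ia$ (since $t+1 \notin \{j-1,j,j+1\}$), so $x_i y_j z_t w_{t+1} = (x_i z_t)(y_j w_{t+1}) \in \Ia^2$. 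Part (2) is the same argument run once for each of the three monomials, choosing $s$ far from $i$ and $t$ far from $j$ (and, for the third monomial, $s,t$ far from each other), again possible since $m \ge 6$.

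For part (3): now $j, k \in \{i-1, i, i+1\}$, so $i, j, k$ all lie in a window of length $3$. I pick $s$ with $s, s+1$ outside the window $\{i-2,\dots,i+2\}$ (which also contains $j-1,j,j+1$ and $k-1,k,k+1$); this needs $m$ large enough to have a residue $s$ with $s$ and $s+1$ both avoiding $5$ consecutive residues, which holds for $m \ge 7$ and can be checked directly for $m=6$ (or one notes $m\ge 6$ from the surrounding hypothesis and handles the boundary case by hand). Then $x_i z_s \in \Ia$ because $s \notin \{i-1,i,i+1\}$, and $y_j w_{s+1} \in \Ia$ because $s+1 \notin \{j-1,j,j+1\}$, giving $x_i y_j z_s w_{s+1} \in \Ia^2$; likewise $z_s p_k \in \Ia$ and $w_{s+1} q_k \in \Ia$ give $z_s w_{s+1} p_k q_k \in \Ia^2$.

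The main obstacle I anticipate is purely a counting/bookkeeping one: verifying that the cyclic group $\mathbb{Z}/m$ with $m \ge 6$ always contains an index $s$ such that both $s$ and $s+1$ avoid a prescribed window of $3$ (for parts (1),(2)) or $5$ (for part (3)) consecutive residues — and simultaneously avoid a second such window in parts (1) and (2). For $m$ comfortably large this is immediate; the tightness at $m = 6$ is where one has to be slightly careful, and it may be cleanest to observe that in that regime the two or three windows one must avoid actually overlap (e.g. in part (3) everything sits in a window of length $5 < m$, leaving at least one free pair $\{s,s+1\}$ since $m - 5 \ge 1$, and for $m=6$ one checks the single remaining residue works). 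No genuinely hard algebra is involved — every containment reduces, via the Remark, to the trivial observation that a product of two elements of $\Ia$ lies in $\Ia^2$.
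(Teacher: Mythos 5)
Your strategy is the right one: choose the free index $t$ (or $s,t$) so that the stated degree-$4$ monomial factors as a product of two degree-$2$ monomials each lying in $\Ia$. But there is a genuine gap at the boundary $m = 6$ (i.e.\ $n = 18$, the smallest case Theorem \ref{thm_main} claims), and the "slightly more careful count" you hedge about is not just bookkeeping --- with your fixed pairing the approach actually fails.

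In part (1) you always pair $x_i$ with $z_t$ and $y_j$ with $w_{t+1}$, forcing $t \notin \{i-1,i,i+1\}$ and $t \notin \{j-2,j-1,j\}$. For $m = 6$ and $j = i+4$ these two windows are disjoint and exhaust all of $\mathbb{Z}/6$: with $i = 0$, $j = 4$ you must avoid $\{5,0,1\}\cup\{2,3,4\}$, so no admissible $t$ exists. The same phenomenon breaks part (3): with $m = 6$, $i = 0$, $j = 5$, $k = 1$, the union $\{i-1,i,i+1\}\cup\{j-2,j-1,j\}\cup\{k-1,k,k+1\}\cup\{k-2,k-1,k\}$ is again all of $\mathbb{Z}/6$, so no $s$ works with your pairing. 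Part (2) has the analogous problem.

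The fix --- and what the paper does --- is to allow the \emph{cross} pairing, grouping $x_i$ with $w_{t+1}$ and $y_j$ with $z_t$ when needed, and to choose $t$ by a short case split on the cyclic position of $j$ (and $k$). For instance, in the $i=0$, $j=4$ example above, $j \notin \{i+1,i+2,i+3\}$, so take $t = i+2 = 2$ and write $x_0 y_4 z_2 w_3 = (x_0 w_3)(y_4 z_2)$; now $3 \notin \{5,0,1\}$ and $2 \notin \{3,4,5\}$, so both factors lie in $\Ia$. Your proof needs to incorporate this flexible pairing (with the attendant case analysis on where $j$, $k$ sit relative to $i$), because a single-pairing counting argument does not close for $m = 6$.
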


\begin{proof}
Throughout the proof we use the assumption \(m\geq 6\).

We first prove (1). If \(j\notin\{i+1,i+2,i+3\}\), then we take \(t=i+2\), so that \(x_iy_jz_{i+2}w_{i+3}=(x_iw_{i+3})(y_jz_{i+2}) \equiv 0\). If \(j\in\{i+1,i+2,i+3\}\), then we take \(t=i-2\), so that \(x_iy_jz_{i-2}w_{i-1}=(x_iz_{i-2})(y_jw_{i-1}) \equiv 0\).

Now we show (2). If \(j\notin \{i+1,i+2,i+3\}\), then we take \(s=j+2\), \(t=j-2\), so that 
\begin{alignat*}{2}
x_iy_{i+1}z_{j-2}w_{j-1} & =(x_iz_{j-2})(y_{i+1}w_{j-1})\equiv 0\\
x_{j+2}y_{j+3}z_jw_{j+1}&=(x_{j+2}z_j)(y_{j+3}w_{j+1})\equiv 0\\
x_{j+2}y_{j+3}z_{j-2}w_{j-1}&=(x_{j+2}z_{j-2})(y_{j+3}w_{j-1})\equiv 0.
\end{alignat*}
If \(j\in\{i+1,i+2,i+3\}\), then we take \(s=j-2\), \(t=j+1\), so that
\begin{alignat*}{2}
x_iy_{i+1}z_{j+1}w_{j+2}&=(x_iz_{j+1})(y_{i+1}w_{j+2})\equiv 0\\
x_{j-2}y_{j-1}z_jw_{j+1}&=(x_{j-2}z_j)(y_{j-1}w_{j+1}) \equiv 0\\
x_{j-2}y_{j-1}z_{j+1}w_{j+2} &= (x_{j-2}z_{j+1})(y_{j-1}w_{j+2})\equiv 0.
\end{alignat*}

Finally, we prove (3). If \(k=i+1\), then we take \(s=i-3\), so that
\begin{alignat*}{2}
x_iy_jz_{i-3}w_{i-2}&=(x_iw_{i-2})(y_jz_{i-3})\equiv 0\\
z_{i-3}w_{i-2}p_{i+1}q_{i+1}&=(z_{i-3}p_{i+1})(w_{i-2}q_{i+1})\equiv 0.
\end{alignat*}
If \(k\in\{i-1,i\}\), then we take \(s=i+2\), so that
\begin{alignat*}{2}
x_iy_jz_{i+2}w_{i+3}&=(x_iz_{i+2})(y_jw_{i+3})\equiv 0\\
z_{i+2}w_{i+3}p_{k}q_{k}&=(z_{i+2}p_{k})(w_{i+3}q_{k})\equiv 0.
\end{alignat*}
This finishes the proof.
\end{proof}

\begin{lemma} \label{lem_first}
For all \(x,y,z,w\in\{a,b,c\}\), \(z\ne w\), and for all indices \(i,j,k\), if either \(x_jy_k\in \Ia\) or \(x\ne y\) and \(k=j+1\), then the monomial \(z_iw_{i+1}x_jy_k\) is contained in \(\Ia^2\).
\end{lemma}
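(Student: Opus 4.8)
\emph{Proof proposal.} The plan is to treat the two alternatives in the hypothesis separately, in each case rewriting the monomial $z_iw_{i+1}x_jy_k$ as a sum in which every term is visibly either a product of two elements of $\Ia$ or a monomial already known to lie in $\Ia^2$ by Lemma \ref{lem_index_0}. Throughout I use two facts extracted from the Remark defining $\Ia$: since $z\neq w$ we have $z_iw_{i+1}-z_tw_{t+1}\in\Ia$ for every index $t$, and likewise $x_jy_{j+1}-x_ty_{t+1}\in\Ia$ whenever $x\neq y$. I also use that products of two elements of $\Ia$ lie in $\Ia^2$.

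First suppose $x_jy_k\in\Ia$. By Lemma \ref{lem_index_0}(1), applied with $x_jy_k$ in the role of $x_iy_j$ and with the variables $z,w$, there is an index $t$ with $z_tw_{t+1}x_jy_k\in\Ia^2$. Then
\[z_iw_{i+1}x_jy_k = z_tw_{t+1}x_jy_k + (z_iw_{i+1}-z_tw_{t+1})x_jy_k,\]
where the first summand lies in $\Ia^2$ by the choice of $t$ and the second is the product of $z_iw_{i+1}-z_tw_{t+1}\in\Ia$ with $x_jy_k\in\Ia$, hence also lies in $\Ia^2$. The mild point worth noting is that one cannot simply factor $z_iw_{i+1}x_jy_k=(z_iw_{i+1})(x_jy_k)$, because the degree-$2$ monomial $z_iw_{i+1}$ is never in $\Ia$; passing to the good index $t$ furnished by Lemma \ref{lem_index_0} is exactly what repairs this, the correction being absorbed into $\Ia^2$.

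Now suppose $x\neq y$ and $k=j+1$, so $z_iw_{i+1}$ and $x_jy_{j+1}$ are both mixed adjacent monomials. Apply Lemma \ref{lem_index_0}(2) with $z_iw_{i+1}$ in the role of $x_iy_{i+1}$ and $x_jy_{j+1}$ in the role of $z_jw_{j+1}$; this yields indices $s,t$ with $z_iw_{i+1}x_ty_{t+1}$, $z_sw_{s+1}x_jy_{j+1}$ and $z_sw_{s+1}x_ty_{t+1}$ all in $\Ia^2$. Telescoping,
\[z_iw_{i+1}x_jy_{j+1} = z_iw_{i+1}x_ty_{t+1} + z_sw_{s+1}(x_jy_{j+1}-x_ty_{t+1}) + (z_iw_{i+1}-z_sw_{s+1})(x_jy_{j+1}-x_ty_{t+1}),\]
which one checks is an identity of polynomials. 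The first summand is in $\Ia^2$ by the lemma; the middle summand equals $z_sw_{s+1}x_jy_{j+1}-z_sw_{s+1}x_ty_{t+1}$, again in $\Ia^2$ by the lemma; and the last summand is the product of the two $\Ia$-elements $z_iw_{i+1}-z_sw_{s+1}$ (using $z\neq w$) and $x_jy_{j+1}-x_ty_{t+1}$ (using $x\neq y$), so it too lies in $\Ia^2$. This completes the case, and the lemma.

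The argument is entirely formal once Lemma \ref{lem_index_0} is available; there is no genuine obstacle beyond matching variables and indices correctly when citing that lemma and keeping careful track of which translation relations $x_iy_{i+1}-x_jy_{j+1}$ belong to $\Ia$ — only those with $x\neq y$, which is precisely why the hypothesis $z\neq w$ (and, in the second case, $x\neq y$) is needed.
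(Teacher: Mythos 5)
Your proof is correct and follows essentially the same route as the paper: both cases are handled by the same decompositions, using Lemma \ref{lem_index_0}(1) and (2) to produce the auxiliary indices and absorbing the corrections via the translation relations $z_iw_{i+1}-z_tw_{t+1}\in\Ia$ and $x_jy_{j+1}-x_ty_{t+1}\in\Ia$. The only cosmetic difference is that the paper first reduces by symmetry to $z=a$, $w=b$, which you skip by working with general $z\ne w$ directly.
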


\begin{proof}
By symmetry we can assume \(z=a\), \(w=b\). Hence, we only need to examine monomials of the form \(a_ib_{i+1}x_jy_k\), where either \(x_jy_k\in \Ia\) or \(x\ne y\) and \(k=j+1\).

First consider the case \(x_jy_k\in \Ia\). By Lemma \ref{lem_index_0} there exists an index \(t\) such that \(a_tb_{t+1}x_jy_k\in \Ia^2\). Therefore, \(a_ib_{i+1}x_jy_k=(a_ib_{i+1}-a_tb_{t+1})(x_jy_k) + a_tb_{t+1}x_jy_k \equiv 0\).

Now consider the case where \(x\ne y\) and \(k=j+1\). Then, by Lemma \ref{lem_index_0}, there exist indices \(t\) and \(s\) such that \(a_tb_{t+1}x_jy_{j+1}\), \(a_ib_{i+1}x_sy_{s+1}\), and \(a_tb_{t+1}x_sy_{s+1}\) are in \(\Ia^2\). Therefore, \(a_ib_{i+1}x_jy_{j+1}= (a_ib_{i+1}-a_tb_{t+1})(x_jy_{j+1}-x_{s}y_{s+1}) + a_ib_{i+1}x_sy_{s+1} + a_tb_{t+1}x_jy_{j+1} - a_tb_{t+1}x_sy_{s+1} \equiv 0\).
\end{proof}

\begin{lemma} \label{lem_special_cases}
For all indices \(i\) and \(j\), the monomial \(a_ia_jb_ic_i\) is contained in \(\Ja\).
\end{lemma}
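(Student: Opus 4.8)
The plan is to treat $a_ia_jb_ic_i$ as $a_j$ times the ``diagonal block'' $a_ib_ic_i$ and to reduce it, working with the relation $\equiv$ (equality in $\Sa/\Ia^2$) just introduced, to one of the defining quartics $a_la_{l+1}a_{l+2}^2$ of $\Ja$. I would split on the position of $j$ relative to $i$; the cases $j\in\{i-1,i,i+1\}$ are the genuinely special ones (whence the name), and the rest are handled by a ``sliding'' argument.

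First I would dispose of the generic case $j\notin\{i-1,i,i+1\}$. Here $a_ia_j\in\Ia$, and also $a_jb_i,a_jc_i\in\Ia$ by the first clause of the Remark. Using $b_ic_i-a_{i-1}a_i\in\Ia$ one gets $a_ia_jb_ic_i\equiv a_{i-1}a_i^2a_j$, and then using $a_i^2-b_{i-1}c_{i-1}\in\Ia$ together with $a_{i-1}a_j\in\Ia$ (valid when $j$ is also far from $i-1$, so the error lands in $\Ia^2$) one gets $a_ia_jb_ic_i\equiv a_j(a_{i-1}b_{i-1}c_{i-1})$: the block index can be slid to any value outside $\{j-1,j,j+1\}$ while the ``extra'' variable $a_j$ stays fixed, all error terms landing in $\Ia^2$. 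Since $m\geq 6$, the forbidden arc $\{j-1,j,j+1\}$ is short, so one can slide the block until it sits two steps from $j$; then a final application of $b_kc_k\equiv a_{k-1}a_k$ makes the monomial $\equiv a_la_{l+1}a_{l+2}^2$ for suitable $l$, hence it lies in $\Ja$.

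For $j\in\{i-1,i,i+1\}$ no quadratic factor of $a_ia_jb_ic_i$ lies in $\Ia$ and the monomial is not a generator, so one cannot slide. The extra input is Lemma \ref{lem_first} (with Lemma \ref{lem_index_0}), which places monomials such as $b_{i-1}c_{i-1}b_ic_i=(b_{i-1}c_i)(c_{i-1}b_i)$, $\ a_{i+1}^2b_ic_i=(b_ia_{i+1})(c_ia_{i+1})$, and $a_{i+2}^2b_{i+1}c_{i+1}$ into $\Ia^2$. Using these, one substitutes two quadratic relations at once; for instance when $j=i+1$, write $a_ia_{i+1}=a_{i+2}^2+(\text{element of }\Ia)$ and $b_ic_i=a_{i-1}a_i+(\text{element of }\Ia)$, whereupon the cross terms are products of distance-$\geq 2$ quadratics (so in $\Ia^2$) and one is left with $a_ia_{i+1}b_ic_i\equiv a_{i-1}a_i^2a_{i+1}\equiv(a_{i-1}a_{i+1})(b_{i-1}c_{i-1})=a_{i+1}(a_{i-1}b_{i-1}c_{i-1})$, a block with a distant extra variable, hence in $\Ja$ by the generic case. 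The cases $j=i$ and $j=i-1$ are handled the same way: they reduce, modulo $\Ia^2$, to quartic generators, to one another, and to monomials of the shape $(b_kc_k)^2$ and $a_k^4$, for which the same Lemma \ref{lem_first} bookkeeping again yields membership in $\Ja$.

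The hard part is exactly this bookkeeping for the special cases: a careless substitution of a quadratic relation only gives membership in $\Sa_2\Ia$, not in $\Ia^2$, so at each step one must choose which of the two available relations for $a_ia_{i+1}$, $b_ic_i$, $a_i^2$, $\dots$ to apply so that every error term is either a product of two elements of $\Ia$ or a monomial covered by Lemma \ref{lem_first}. I expect the verification that the few ``stuck'' monomials surviving all these reductions are congruent, modulo $\Ia^2$, to combinations of the quartic generators $a_la_{l+1}a_{l+2}^2$, $b_lb_{l+1}b_{l+2}^2$, $c_lc_{l+1}c_{l+2}^2$ to be the real content of the proof.
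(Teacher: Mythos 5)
Your high-level strategy is exactly the paper's: reduce $a_ia_jb_ic_i$ modulo $\Ia^2$ by sliding the ``block'' index, using the degree-two relations $b_kc_k\equiv a_{k-1}a_k$ and $a_k^2\equiv b_{k-1}c_{k-1}$, until the monomial becomes one of the quartic generators $a_la_{l+1}a_{l+2}^2$ of $\Ja$. Your treatment of the generic case $j\notin\{i-1,i,i+1\}$ via two half-slides per step is correct, and is in fact slightly cleaner than the paper at $j=i-2$: a single half-slide $a_{i-2}a_ib_ic_i=(a_{i-2}a_i)(b_ic_i-a_{i-1}a_i)+a_{i-2}a_{i-1}a_i^2$ already produces a generator of $\Ja$, whereas the paper uses a longer identity to reduce this case to $j=i-1$.

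The gap is that you have not actually done the part you yourself identify as ``the real content''. Of the three special cases $j\in\{i-1,i,i+1\}$ only $j=i+1$ is worked out; the reductions $a_ia_{i+1}b_ic_i\equiv a_{i-1}a_i^2a_{i+1}\equiv a_{i+1}(a_{i-1}b_{i-1}c_{i-1})$ do hold (after supplying the justifications that $a_{i-1}a_ia_{i+2}^2$ and $a_{i+2}^2b_ic_i=(a_{i+2}b_i)(a_{i+2}c_i)$ lie in $\Ia^2$), but the claim ``hence in $\Ja$ by the generic case'' is not immediate: the block is at $i-1$ and the extra variable at $i+1=(i-1)+2$, so your downward-only slide must travel almost the whole cycle before reaching $j'+2=i+3$. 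This does work for $m\geq 6$, but you should say so, since the relation $a_k^2\equiv b_{k-1}c_{k-1}$ moves the block only one way. For $j=i$ and $j=i-1$ you give no argument at all; these are not parallel to $j=i+1$ (for $j=i-1$ the substitution $a_{i-1}a_i\equiv a_{i+1}^2$, $b_ic_i\equiv a_{i+1}^2$ puts both factors on the same index, and the cross terms are $a_{i+1}^2b_ic_i$, $a_{i-1}a_ia_{i+1}^2$ and $a_{i+1}^4$, which need Lemma \ref{lem_first}-type input and the auxiliary fact $a_{i+1}^4\in\Ia^2$ from the paper; for $j=i$ the paper routes through $a_i^2b_ic_i\equiv a_i^2a_{i+1}^2\equiv 2a_ia_{i+1}a_{i+2}^2$). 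You would need to supply these computations, or equivalents, for the proof to be complete.
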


\begin{proof}
First assume that \(j\notin\{i-2,i-1,i,i+1\}\). Then, we can rewrite \(a_ia_jb_ic_i\) as follows.
\begin{alignat*}{2}
a_ia_jb_ic_i & = (a_ib_i-c_{i-1}c_i)(a_jc_i) + (a_jc_{i-1})(c_i^2-a_{i-1}b_{i-1}) + \\
 & + a_{i-1}a_jb_{i-1}c_{i-1} \equiv a_{i-1}a_jb_{i-1}c_{i-1}
\end{alignat*}
Hence, iterating this procedure, we get \(a_ia_jb_ic_i\equiv a_ja_{j+2}b_{j+2}c_{j+2}\). Therefore, we just need to examine monomials \(a_{i-2}a_ib_ic_i\), \(a_{i-1}a_ib_ic_i\), \(a_i^2b_ic_i\), and \(a_ia_{i+1}b_ic_i\).

Monomial \(a_{i-2}a_ib_ic_i\) can be rewritten in the following way.
\begin{alignat*}{2}
a_{i-2}a_ib_ic_i & = (a_ib_i-c_{i-1}c_i)(a_{i-2}c_i) + (a_{i-2}c_{i-1}-a_{i+2}c_{i+3})(c_i^2-a_{i-1}b_{i-1}) + \\
 & + a_{i-2}a_{i-1}b_{i-1}c_{i-1} + (a_{i+2}c_i)(c_ic_{i+3}) - (a_{i-1}a_{i+2})(b_{i-1}c_{i+3}) \equiv \\
 & \equiv a_{i-2}a_{i-1}b_{i-1}c_{i-1}
\end{alignat*}
Hence, we are reduced to considering \(a_{i-1}a_ib_ic_i\), \(a_i^2b_ic_i\), and \(a_ia_{i+1}b_ic_i\). Before we do so, we make some auxiliary computations.
\begin{alignat*}{2}
a_i^4 & = (a_i^2-b_{i-1}c_{i-1})^2-(b_{i-1}^2-b_{i-3}b_{i-2})(c_{i-1}^2-c_{i-3}c_{i-2})+ \\
 & - b_{i-1}^2c_{i-3}c_{i-2} - b_{i-3}b_{i-2}c_{i-1}^2 + b_{i-3}b_{i-2}c_{i-3}c_{i-2} + \\
 & + 2a_i^2b_{i-1}c_{i-1} \\
a_i^2a_{i+1}^2 & = (a_i^2-b_{i-1}c_{i-1})(a_{i+1}^2-b_ic_i) + a_i^2b_ic_i + (a_{i+1}b_{i-1})(a_{i+1}c_{i-1}) + \\
 & - b_{i-1}b_ic_{i-1}c_i
\end{alignat*}
Hence, Lemma \ref{lem_first} shows \(a_i^4\equiv 0\) and \(a_i^2a_{i+1}^2\equiv a_i^2b_ic_i\). We make further computations, where we assume \(j\notin\{i-1,i,i+1\}\).
\begin{alignat*}{2}
a_{i}^2b_{i}c_{i} & \equiv a_i^2a_{i+1}^2 = (a_{i+2}^2-a_{i}a_{i+1})^2 - a_{i+2}^4 + 2a_{i}a_{i+1}a_{i+2}^2 \equiv 2a_{i}a_{i+1}a_{i+2}^2 \in \Ja \\
a_ia_ja_{j+1}^2 & = (a_{i}a_{j})(a_{j+1}^2-a_{j-1}a_{j}) + a_ia_{j-1}a_j^2  \equiv a_ia_{j-1}a_{j}^2
\end{alignat*}
Iterating the last computation we get \(a_ia_ja_{j+1}^2\equiv a_ia_{i+1}a_{i+2}^2\).
We are now ready to rewrite \(a_{i-1}a_ib_ic_i\) and \(a_ia_{i+1}b_ic_i\).
\begin{alignat*}{2}
a_{i-1}a_ib_ic_i & = (a_{i-1}a_i-a_{i+1}^2)(b_ic_i-a_{i+1}^2)+ a_{i-1}a_ia_{i+1}^2 + \\
 & + (a_{i+1}b_i-a_{i+3}b_{i+2})(a_{i+1}c_i-a_{i-1}c_{i-2}) + (a_{i-1}a_{i+1})(b_ic_{i-2}) + \\
 & + (a_{i+1}a_{i+3})(b_{i+2}c_i) - (a_{i-1}a_{i+3})(b_{i+2}c_{i-2}) - a_{i+1}^4 \equiv \\
 & \equiv a_{i-1}a_{i}a_{i+1}^2 \in \Ja \\
a_ia_{i+1}b_ic_i & = (a_ia_{i+1}-a_{i+2}^2)(b_ic_i-a_{i+1}^2) + (a_ia_{i+1}-a_{i+2}^2)(a_{i+1}^2-a_{i-1}a_i) + \\
 & + a_{i-1}a_i^2a_{i+1} + a_{i+1}^2a_{i+2}^2 - (a_{i-1}a_{i+2})(a_ia_{i+2}) + (a_{i+2}b_i)(a_{i+2}c_i) + \\
 & -a_{i+1}^2a_{i+2}^2 \equiv a_{i+1}a_{i+2}a_{i+3}^2 \in \Ja
\end{alignat*}
This finishes the proof.
\end{proof}

\begin{lemma} \label{lem_i_plus_one}
For all \(x,y,z,w\in\{a,b,c\}\), \(z\ne w\), and for all indices \(i,j,k\), the monomial \(z_iw_{i+1}x_jy_k\) is contained in \(\Ja\).
\end{lemma}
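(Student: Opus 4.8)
The goal is to show that $z_iw_{i+1}x_jy_k \in \Ja$ for every choice of $z\ne w$ in $\{a,b,c\}$ and every triple of indices. By the invariance of $\Fa$, $\Ia$, and $\Ja$ under permutations of $\{a,b,c\}$ and index translation, I may assume $z=a$, $w=b$, so I must handle all monomials $a_ib_{i+1}x_jy_k$. The plan is to split on the position of $x_jy_k$ relative to the ``band'' structure encoded in the Remark defining $\Ia$. First, if $x_jy_k\in\Ia$, or if $x\ne y$ and $k=j+1$, then Lemma \ref{lem_first} already gives $a_ib_{i+1}x_jy_k\in\Ia^2\subset\Ja$, so nothing more is needed. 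By the trichotomy in the Remark, the only remaining case is that there is an index $\ell$ and $p\ne q$ in $\{a,b,c\}$ with $x_jy_k - p_\ell q_\ell \in \Ia$; writing $a_ib_{i+1}x_jy_k = (a_ib_{i+1})(x_jy_k - p_\ell q_\ell) + a_ib_{i+1}p_\ell q_\ell$ and noting the first summand lies in $\Ia^2$ (as $a_ib_{i+1}\in\Ia^1$), I reduce to the single shape $a_ib_{i+1}p_\ell q_\ell$ with $p\ne q$.

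Thus everything comes down to monomials of the form $a_ib_{i+1}p_\ell q_\ell$ where $\{p,q\}\subset\{a,b,c\}$ has two distinct elements. Up to swapping $p,q$ there are three subcases for $\{p,q\}$: $\{a,b\}$, $\{a,c\}$, and $\{b,c\}$; and the index $\ell$ is unrestricted. The strategy here is a straightening argument against the generators of $\Ja$, in the spirit of Lemma \ref{lem_special_cases}: when $\ell$ is far from $i$ (concretely $\ell\notin\{i-1,i,i+1\}$, say), one factor $a_ip_\ell$ or $b_{i+1}q_\ell$ is a ``spread'' monomial lying in $\Ia$, so the whole product lies in $\Ia^2\equiv 0$ directly; the finitely many remaining near-diagonal positions of $\ell$ relative to $i$ are rewritten by hand. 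For those, I expect to use the binomial relations of $\Ia$ (e.g.\ $a_ib_{i+1}-a_\ell b_{\ell+1}$, $a_ib_i - c_{i-1}c_i$ and its cyclic/permuted analogues) together with part (2) of Lemma \ref{lem_index_0}, which supplies auxiliary indices $s,t$ making products like $a_sb_{s+1}p_\ell q_\ell$ vanish, exactly as in the proof of Lemma \ref{lem_first}. A typical manipulation: $a_ib_{i+1}a_\ell b_\ell = (a_ib_{i+1})(a_\ell b_\ell - c_{\ell-1}c_\ell) + (a_ib_{i+1})(c_{\ell-1}c_\ell)$, where the first term is in $\Ia^2$ and the second is of the already-handled form $a_ib_{i+1}\cdot(\text{mixed}_{\ell-1,\ell})$ after one more reduction, or is killed outright when $\ell-1,\ell$ are far from $i,i+1$.

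The main obstacle is the bookkeeping for the short list of ``bad'' positions of $\ell$ near $i$: when the supports $\{i,i+1\}$ and $\{\ell\}$ (or $\{\ell-1,\ell\}$ after expanding a binomial) overlap, no single factor is manifestly in $\Ia$, and one must chain together several binomial substitutions and applications of Lemma \ref{lem_index_0}(2), keeping track of which $\Ia^2$-terms and which $\Ja$-generators $a_ia_{i+1}a_{i+2}^2$ (and permutations) are being consumed — much as the computations $a_i^4\equiv 0$, $a_i^2a_{i+1}^2\equiv a_i^2b_ic_i$, and the final rewritings of $a_{i-1}a_ib_ic_i$ and $a_ia_{i+1}b_ic_i$ were carried out in Lemma \ref{lem_special_cases}. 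I would organize this by first reducing, via Lemma \ref{lem_special_cases} and its translates/permutations, any product $a_ib_{i+1}p_\ell q_\ell$ with three or more of the indices among $\{i-1,i,i+1,i+2\}$ to a $\Ja$-generator or to $0$, and then dispatching the handful of genuinely overlapping configurations one at a time; the assumption $m\ge 6$ is what guarantees enough room to pick the separating indices $s,t$ in each step.
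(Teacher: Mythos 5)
Your proposal has a genuine flaw at a load-bearing step. You claim that $a_ib_{i+1}\in\Ia$, and use this to conclude that $(a_ib_{i+1})(x_jy_k - p_\ell q_\ell)\in\Ia^2$ and, in your "typical manipulation," that $(a_ib_{i+1})(a_\ell b_\ell - c_{\ell-1}c_\ell)\in\Ia^2$. But $a_ib_{i+1}$ is \emph{not} an element of $\Ia$: for $x=a$, $y=b$, $j=i+1$ the defining Remark puts you in case (2) ("$x\ne y$ and $j\in\{i-1,i+1\}$"), which imposes no membership condition, and only the binomials $a_ib_{i+1}-a_jb_{j+1}$ land in $\Ia$. (This also matches $\Ann(\Fa)$: e.g.\ $a_ia_{i+1}\cdot\Fa=a_{i+1}\ne 0$, and while $a_ib_{i+1}\cdot\Fa=0$, that is a coincidence that never forces $a_ib_{i+1}$ into the smaller ideal $\Ia$.) Consequently your reduction to the shape $a_ib_{i+1}p_\ell q_\ell$ does not follow as written. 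The paper's proof fixes exactly this by invoking Lemma~\ref{lem_index_0} to produce an auxiliary index $s$ with $a_sb_{s+1}x_jy_k,\,a_sb_{s+1}p_tq_t\in\Ia^2$, and then expanding
\[
a_ib_{i+1}x_jy_k=(a_ib_{i+1}-a_sb_{s+1})(x_jy_k-p_tq_t)+a_ib_{i+1}p_tq_t+a_sb_{s+1}x_jy_k-a_sb_{s+1}p_tq_t,
\]
where now both factors of the first term genuinely lie in $\Ia$; this yields $a_ib_{i+1}x_jy_k\equiv a_ib_{i+1}p_tq_t$. You gesture at this mechanism later, but the step you actually wrote is not valid.

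Beyond this, the remaining "bad" cases are only sketched. After the (corrected) reduction one must handle $a_ib_{i+1}x_jy_j$ with $x\ne y$ for $j\in\{i-1,i,i+1,i+2\}$ (note: your range $\{i-1,i,i+1\}$ is off, since you need \emph{both} $a_ip_\ell\in\Ia$ and $b_{i+1}q_\ell\in\Ia$ for the product to split, which excludes $\ell=i+2$ as well). The cases $j=i-1$ and $j=i+2$ reduce cleanly via Lemma~\ref{lem_first} because one of the cross-factors such as $y_{i-1}b_{i+1}$ already lies in $\Ia$. The cases $j=i$ and $j=i+1$ require explicit rewritings of the monomials $a_i^2b_ib_{i+1}$, $a_i^2b_{i+1}c_i$, $a_ia_{i+1}b_{i+1}^2$, $a_ib_{i+1}^2c_{i+1}$ plus two uses of Lemma~\ref{lem_special_cases} for $a_ib_ib_{i+1}c_i$ and $a_ia_{i+1}b_{i+1}c_{i+1}$; these computations are the actual content of the lemma and cannot be left as "dispatching the handful of genuinely overlapping configurations one at a time." So while the high-level shape of your argument (Lemma~\ref{lem_first} for the easy cases, a reduction to $a_ib_{i+1}p_tq_t$, then a case split on the distance of $t$ from $i$) matches the paper, the proposal as written contains a real error and stops before the hard part.
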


\begin{proof}
By symmetry we can assume \(z=a\), \(w=b\). Hence, we only need to examine monomials of the form \(a_ib_{i+1}x_jy_k\).

Lemma \ref{lem_first} covers some of the cases. In any other there exist \(p,q\in\{a,b,c\}\), \(p\ne q\), and an index \(t\) such that \(x_jy_k-p_tq_t\in \Ia\). Furthermore, since \(x_jy_k\notin \Ia\) we have \(k,t\in\{j-1,j,j+1\}\), hence, by Lemma \ref{lem_index_0}, we can choose an index \(s\) such that both \(a_sb_{s+1}x_jy_k\) and \(a_sb_{s+1}p_tq_t\) are in \(\Ia^2\). Then, we have \(a_ib_{i+1}x_jy_k=(a_ib_{i+1}-a_sb_{s+1})(x_jy_k-p_tq_t) + a_ib_{i+1}p_tq_t + a_sb_{s+1}x_jy_k - a_sb_{s+1}p_tq_t \equiv a_{i}b_{i+1}p_tq_t\). Hence, to finish the proof it suffices to consider monomials of the form \(a_ib_{i+1}x_jy_j\) with \(x\ne y\).

If \(j\notin \{i-1,i,i+1,i+2\}\), then \(a_ib_{i+1}x_jy_j=(a_ix_j)(b_{i+1}y_j)\equiv 0\).

Since \(x\ne y\) one of them is not \(a\), say \(x\ne a\). Then, if \(j=i-1\), we can write \(a_ib_{i+1}x_{i-1}y_{i-1}\) as \(x_{i-1}a_iy_{i-1}b_{i+1}\), and since \(y_{i-1}b_{i+1}\in \Ia\), Lemma \ref{lem_first} applies showing \(x_{i-1}a_iy_{i-1}b_{i+1}\equiv 0\). Similarly, if \(j=i+2\), since one of \(x,y\) in not \(b\), say \(x\ne b\), we know, by Lemma \ref{lem_first}, that \(b_{i+1}x_{i+2}a_iy_{i+2}\equiv 0\).

In the case \(j=i+1\) we need to consider monomials \(a_ia_{i+1}b_{i+1}^2\), \(a_ia_{i+1}b_{i+1}c_{i+1}\), and \(a_ib_{i+1}^2c_{i+1}\). Monomial \(a_ia_{i+1}b_{i+1}c_{i+1}\) is a special case of Lemma \ref{lem_special_cases}. Others can be rewritten as follows.
\begin{alignat*}{2}
a_ia_{i+1}b_{i+1}^2 & = (a_ib_{i+1}-a_{i-2}b_{i-1})(a_{i+1}b_{i+1}-c_{i+2}^2) + a_ib_{i+1}c_{i+2}^2 + \\
 & + (a_{i-2}a_{i+1})(b_{i-1}b_{i+1}) - (a_{i-2}c_{i+2})(b_{i-1}c_{i+2}) \equiv 0 \\
a_ib_{i+1}^2c_{i+1} & = (a_ib_{i+1}-a_{i-2}b_{i-1})(b_{i+1}c_{i+1}-a_{i+2}^2) + a_ia_{i+2}^2b_{i+1} + \\
 & + (a_{i-2}b_{i+1})(b_{i-1}c_{i+1}) - (a_{i-2}a_{i+2})(b_{i-1}a_{i+2}) \equiv 0
\end{alignat*}
where \(a_ib_{i+1}c_{i+2}^2\) and \(a_ia_{i+2}^2b_{i+1}\) are in \(\Ia^2\) by Lemma \ref{lem_first}.

It remains to consider \(j=i\). We need to examine three monomials, \(a_i^2b_ib_{i+1}\), \(a_i^2b_{i+1}c_i\), and \(a_ib_ib_{i+1}c_i\). We can rewrite them as follows.
\begin{alignat*}{2}
a_i^2b_ib_{i+1} & = (a_i^2-a_{i-2}a_{i-1})(b_ib_{i+1}-b_{i+2}^2) + (a_ib_{i+2})^2 + \\ 
 & + (a_{i-2}b_i)(a_{i-1}b_{i+1}) - (a_{i-2}b_{i+2})(a_{i-1}b_{i+2}) \equiv 0 \\
a_i^2b_{i+1}c_i & = (a_i^2-a_{i-2}a_{i-1})(b_{i+1}c_i-b_{i+3}c_{i+2}) + (a_ib_{i+3})(a_ic_{i+2}) + \\
 & + (a_{i-2}c_i)(a_{i-1}b_{i+1}) - (a_{i-2}c_{i+2})(a_{i-1}b_{i+3}) \equiv 0
\end{alignat*}
Since, by symmetry, \(a_ib_ib_{i+1}c_i\) is a special case of Lemma \ref{lem_special_cases} the proof is finished.
\end{proof}

\begin{lemma} \label{lem_\Ia}
For all \(x,y,z,w\in\{a,b,c\}\), and for all indices \(i,j,k,t\) such that \(x_iy_j\in \Ia\), the monomial \(x_iy_jz_kw_t\) is in \(\Ja\).
\end{lemma}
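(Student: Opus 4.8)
The plan is to reduce the statement, via the third clause of the Remark applied to the ``second factor'' $z_kw_t$, to situations already handled by Lemmas \ref{lem_special_cases} and \ref{lem_i_plus_one} together with the trivial inclusion $\Ia^2\subseteq\Ja$. By that clause, one of the following holds: $z_kw_t\in\Ia$; or $z\ne w$ and $t\in\{k-1,k+1\}$; or there are an index $l$ and letters $p\ne q$ with $z_kw_t-p_lq_l\in\Ia$. In the first case $x_iy_jz_kw_t\in\Ia\cdot\Ia=\Ia^2\subseteq\Ja$ since $x_iy_j\in\Ia$. In the second, after possibly swapping $z$ and $w$ we may take $t=k+1$, and then $x_iy_jz_kw_t=z_kw_{k+1}\cdot x_iy_j\in\Ja$ by Lemma \ref{lem_i_plus_one}. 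In the third case
\[x_iy_jz_kw_t=x_iy_j(z_kw_t-p_lq_l)+x_iy_jp_lq_l,\]
where the first summand is in $\Ia^2\subseteq\Ja$; so it suffices to prove that $x_iy_jp_lq_l\in\Ja$ whenever $x_iy_j\in\Ia$ and $p\ne q$.

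For this I would do a case analysis on the position of $l$, using that $x_iy_j\in\Ia$ forces $j\notin\{i-1,i,i+1\}$ (these being the only degree-$2$ monomials contained in $\Ia$, as $\Ia\subseteq\Ann\Fa$) and that $\Ia,\Ja$ are invariant under index translation and permutations of $\{a,b,c\}$. If $l$ is far from both $i$ and $j$, then $x_ip_l\in\Ia$ and $y_jq_l\in\Ia$, so $x_iy_jp_lq_l=(x_ip_l)(y_jq_l)\in\Ia^2$. If $l\ne i$ and $l\ne j$ but $l$ is adjacent to one of them, say to $i$, then among $x_ip_l$ and $x_iq_l$ at least one has distinct letters (because $p\ne q$) and adjacent indices, hence is of the form $u_sv_{s+1}$ with $u\ne v$; grouping it with the remaining two variables and applying Lemma \ref{lem_i_plus_one} gives $x_iy_jp_lq_l\in\Ja$. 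The symmetric argument disposes of $l$ adjacent to $j$, and the only remaining possibility is $l=i$ (the case $l=j$ being symmetric in $x_iy_j$).

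In that remaining case the monomial is $x_ip_iq_iy_j$, with three variables at the index $i$, with $p\ne q$, and with $j$ far from $i$. If $\{x,p,q\}=\{a,b,c\}$, this is, up to a permutation of $\{a,b,c\}$, a monomial of the form $a_ia_jb_ic_i$, hence in $\Ja$ by Lemma \ref{lem_special_cases}. If instead $x$ equals $p$ or $q$, say $x=p$, the monomial is $p_i^2q_iy_j$; since $p_i^2\notin\Ia$, the third clause of the Remark yields letters $p'\ne q'$ and an index $l'$ with $p_i^2-p'_{l'}q'_{l'}\in\Ia$, and for $\Fa$ one checks $l'\in\{i-1,i+1\}$. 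As $q_iy_j\in\Ia$ (because $j$ is far from $i$), we get $p_i^2q_iy_j\equiv p'_{l'}q'_{l'}q_iy_j\pmod{\Ia^2}$, and the monomial $p'_{l'}q'_{l'}q_iy_j$ is again of the shape ``element of $\Ia$ (namely $q_iy_j$) times a two-letter product $p'_{l'}q'_{l'}$ at one index'', now with $l'\notin\{i,j\}$, so it is covered by the previous paragraph (via Lemma \ref{lem_i_plus_one}), and in particular this extra reduction does not loop. Assembling the cases completes the proof.

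I expect the main obstacle to be exactly this bookkeeping in the case $z_kw_t\equiv p_lq_l\pmod{\Ia}$ with $p\ne q$: one must verify that every monomial $x_iy_jp_lq_l$ with $x_iy_j\in\Ia$ and $p\ne q$ either factors visibly through $\Ia^2$, or exhibits an adjacent distinct-letter pair so that Lemma \ref{lem_i_plus_one} applies, or degenerates to the genuinely special monomials $a_ib_ic_iy_j$ of Lemma \ref{lem_special_cases}. Controlling the index collisions $l=i$ and $l=j$, and checking that the single further reduction through a square $p_i^2$ lands back in an already-treated case rather than recursing indefinitely, is the delicate point; everything else is the routine case distinction above.
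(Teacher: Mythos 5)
Your overall architecture matches the paper's: peel off the cases $z_kw_t\in\Ia$ and $z_kw_t=p_sq_{s+1}$ exactly as the paper does, then replace $z_kw_t$ by $p_lq_l$ modulo $\Ia^2$ and analyze the position of $l$; your case on ``all three letters distinct'' is the paper's $y=c$ case (Lemma \ref{lem_special_cases}), and your extra reduction through a square $p_i^2$ plays the role of the paper's rewrite $(a_ib_i-c_{i-1}c_i)(x_jy_i)+c_{i-1}y_ic_ix_j$. The difference is that the paper normalizes $p_lq_l$ to $a_ib_i$ by symmetry and then rewrites \emph{that} factor against the close pair $c_{i-1}c_i$, whereas you keep $x_iy_j$ fixed and rewrite the square. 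The two are essentially the same computation viewed from opposite ends of the monomial.

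There is, however, one genuine soft spot in your argument. You justify ``$x_iy_j\in\Ia$ forces $j\notin\{i-1,i,i+1\}$'' by the parenthetical ``these being the only degree-$2$ monomials contained in $\Ia$, as $\Ia\subseteq\Ann\Fa$.'' That inference is not valid: $\Ann(\Fa)_2$ does contain close monomials, e.g.\ $a_ib_{i+1}\cdot\Fa=0$, so the inclusion $\Ia\subseteq\Ann\Fa$ alone does not rule out close monomials in $\Ia$. The conclusion you want is true, but for a structural reason: the degree-$2$ generators of $\Ia$ are the far monomials together with binomials each of whose two terms is a close monomial, and no linear combination of those binomials is a pure monomial. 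You need this claim at exactly one place, namely when you pass from $p_i^2q_iy_j$ to $p'_{l'}q'_{l'}q_iy_j$ modulo $\Ia^2$ and quietly use $q_iy_j\in\Ia$; note that the paper's version of this step is arranged so as never to invoke any such fact --- after $(a_ib_i-c_{i-1}c_i)(x_jy_i)$ one simply applies Lemma \ref{lem_i_plus_one} to the leftover monomial $c_{i-1}y_i\cdot c_ix_j$, which requires no membership in $\Ia$ of the second factor. If you keep your parametrization, either supply the correct structural argument for ``no close monomial lies in $\Ia_2$,'' or regroup as the paper does so that Lemma \ref{lem_i_plus_one} carries the remainder term unconditionally.
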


\begin{proof}
If \(z_kw_t\) is contained in \(\Ia\) as well, then \(x_iy_jz_kw_t\in \Ia^2\). If \(z_kw_t\) is of the form \(p_sq_{s+1}\) for some index \(s\) and \(p,q\in\{a,b,c\}\), \(p\ne q\), then Lemma \ref{lem_i_plus_one} applies.

In any other case there exist \(p,q\in\{a,b,c\}\), \(p\ne q\), and an index \(s\) such that \(z_kw_t-p_sq_s\in \Ia\). Therefore, we have \(x_iy_jz_kw_t= (x_iy_j)(z_kw_t-p_sq_s) + x_iy_jp_sq_s\equiv x_iy_jp_sq_s\). Hence, by symmetry, it suffices to examine monomials of the form \(a_ib_ix_jy_k\) with \(x_jy_k\in \Ia\).

If any of \(j,k\) are in \(\{i-1,i+1\}\), then Lemma \ref{lem_i_plus_one} applies. If both \(j\) and \(k\) are not in \(\{i-1,i,i+1\}\), then \(a_ib_ix_jy_k=(a_ix_j)(b_iy_k)\equiv 0\). Thus, we can assume one of \(j,k\) equal to \(i\), say \(k=i\).

Now, if \(y\ne c\), since \(x_jy_i\in \Ia\), we get \(a_ib_ix_jy_i=(a_ib_i-c_{i-1}c_i)(x_jy_i) + c_{i-1}y_ic_ix_j\), hence Lemma \ref{lem_i_plus_one} applies. We are therefore reduced to monomials of the form \(a_ib_ic_ix_j\). By symmetry we can assume \(x=a\), and use Lemma \ref{lem_special_cases} to finish the proof.
\end{proof}

\begin{proposition} \label{prop_0}
Every degree \(4\) homogenous polynomial of \(\Sa\) is contained in \(\Ja\).
\end{proposition}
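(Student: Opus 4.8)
The plan is to reduce to a short finite check. Since monomials of degree $4$ span $\Sa_4$, it suffices to prove $M\in\Ja$ for every monomial $M=\ell_1\ell_2\ell_3\ell_4$ that is a product of four variables; moreover, since $\Fa,\Ia,\Ja$ are invariant under index translation and permutations of $\{a,b,c\}$, we may normalize $M$ by these symmetries. If some pair $\ell_s\ell_t$ lies in $\Ia$, then $M\in\Ja$ by Lemma \ref{lem_\Ia}. If some pair $\ell_s\ell_t$ has the shape $z_iw_{i+1}$ with $z\ne w$, then $M\in\Ja$ by Lemma \ref{lem_i_plus_one}. So the whole problem reduces to the monomials in which no pair of factors is of either kind.

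For such a monomial the defining properties of $\Ia$ (the Remark preceding the construction of $\Ia$) pin down the index pattern. Writing $\ell_s=v_i$ and $\ell_t=w_j$: if $v\ne w$, then $j\notin\{i-1,i+1\}$ (otherwise the pair is of the second, consecutive, kind) and $j\in\{i-1,i,i+1\}$ (otherwise $v_iw_j\in\Ia$), so $j=i$; if $v=w$, then $j\in\{i-1,i,i+1\}$ (otherwise $v_iv_j\in\Ia$). Thus two factors with distinct letters sit at a common index, while two factors with equal letters differ in index by at most one. A short combinatorial argument then gives that, after applying the symmetries, $M$ is one of
\[a_i^4,\quad a_i^3a_{i+1},\quad a_ia_{i+1}^3,\quad a_i^2a_{i+1}^2,\quad a_i^3b_i,\quad a_i^2b_i^2,\quad a_i^2b_ic_i:\]
if at least two letters occur then all four indices coincide, and if only one letter occurs then the four indices lie in a block $\{i,i+1\}$.

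It remains to place these seven monomials in $\Ja$. Three are already handled inside the proof of Lemma \ref{lem_special_cases}, where one finds $a_i^4\equiv 0$ and $a_i^2a_{i+1}^2\equiv a_i^2b_ic_i$, while $a_i^2b_ic_i\in\Ja$ is the case $j=i$ of Lemma \ref{lem_special_cases}. For the four remaining monomials I would use the congruences modulo $\Ia$ that are already present in that same proof --- notably $a_ia_{i+1}\equiv a_{i+2}^2$, $a_ib_i\equiv c_{i-1}c_i$, $a_i^2\equiv b_{i-1}c_{i-1}$, together with their images under the symmetries --- to rewrite, for instance,
\[a_i^3a_{i+1}=a_i^2\bigl(a_ia_{i+1}-a_{i+2}^2\bigr)+a_i^2a_{i+2}^2,\]
and then to split the product $a_i^2(a_ia_{i+1}-a_{i+2}^2)$ once more via $a_i^2\equiv b_{i-1}c_{i-1}$, so that every summand is either a product of two elements of $\Ia$ (hence in $\Ia^2\subset\Ja$) or a monomial carrying a factor-pair in $\Ia$ or a consecutive factor-pair of distinct letters, to which Lemmas \ref{lem_\Ia}, \ref{lem_i_plus_one}, and \ref{lem_special_cases} apply directly. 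The monomials $a_ia_{i+1}^3$, $a_i^3b_i$, and $a_i^2b_i^2$ succumb to the same recipe after one or two substitutions.

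I expect the main difficulty to be organizational rather than conceptual: one must isolate exactly which congruences in $\Sa/\Ia$ are available and verify that the seven-element list is genuinely exhaustive, with no index pattern overlooked; the individual rewritings are short and in the spirit of Lemma \ref{lem_special_cases}. Among the cases, $a_i^2b_i^2$ and the two monomials $a_i^3a_{i+1}$, $a_ia_{i+1}^3$ are the most delicate, since they contain no factor-pair that fires a lemma outright and so need two successive substitutions before every summand is manifestly reducible.
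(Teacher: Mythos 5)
Your proposal is correct and takes essentially the same route as the paper: both use Lemma \ref{lem_\Ia} to discard monomials with a factor-pair in \(\Ia\) and Lemma \ref{lem_i_plus_one} to discard those with a consecutive distinct-letter pair, arrive at the same seven residual monomials \(a_i^2b_ic_i\), \(a_i^2b_i^2\), \(a_i^3b_i\), \(a_i^4\), \(a_i^3a_{i+1}\), \(a_ia_{i+1}^3\), \(a_i^2a_{i+1}^2\), dispose of \(a_i^4\), \(a_i^2a_{i+1}^2\), and \(a_i^2b_ic_i\) via Lemma \ref{lem_special_cases}, and handle the rest by explicit rewritings against the binomial generators of \(\Ia\). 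The paper carries out all five remaining rewritings in full, whereas you give a correct general recipe and one worked example (\(a_i^3a_{i+1}\)); the recipe does extend to the other three as claimed, so the sketch is sound, just less explicit.
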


\begin{proof}
In view of Lemma \ref{lem_\Ia} it suffices to verify monomials where no two indices differ by more than 1. Moreover, if two indices differ exactly by 1, and not all letters are equal, then Lemma \ref{lem_i_plus_one} applies. Hence, by symmetry, it suffices to examine \(a_i^2b_ic_i, a_i^2b_i^2, a_i^3b_i, a_i^4,  a_i^3a_{i+1}, a_ia_{i+1}^3\), and \(a_i^2a_{i+1}^2\). Clearly, \(a_i^2b_ic_i\) is in \(\Ja\). Monomial \(a_i^4\) was shown to be in \(\Ia^2\) in the proof of Lemma \ref{lem_special_cases}, hence, by symmetry, \(c_i^4\) is also in \(\Ia^2\). We can rewrite the remaining monomials as follows.
\begin{alignat*}{2}
a_i^2b_i^2 & = (a_{i}b_{i}-c_{i+1}^2)^2 +2(a_ic_{i+1})(b_ic_{i+1}) - c_{i+1}^4 \equiv 0 \\
a_i^3b_i & = (a_ib_i-c_{i+1}^2)(a_i^2-b_{i-1}c_{i-1}) +(a_ib_{i-1})(b_{i}c_{i-1}) +(a_ic_{i+1})^2 + \\
 & -(c_{i-1}c_{i+1})(b_{i-1}c_{i+1}) \equiv 0 \\
a_i^3a_{i+1} & = (a_i^2-b_{i-1}c_{i-1})(a_ia_{i+1}-b_{i+1}c_{i+1}) + a_i^2b_{i+1}c_{i+1} + \\
 & + a_ia_{i+1}b_{i-1}c_{i-1} - (b_{i-1}b_{i+1})(c_{i-1}c_{i+1}) \\
a_ia_{i+1}^3 & = (a_ia_{i+1}-b_{i+1}c_{i+1})(a_{i+1}^2-b_ic_i) + a_ia_{i+1}b_ic_i + \\ 
 & + a_{i+1}^2b_{i+1}c_{i+1} - b_ib_{i+1}c_ic_{i+1} \\
a_i^2a_{i+1}^2 & = (a_i^2-a_{i-2}a_{i-1})(a_{i+1}^2-b_ic_i) + a_i^2b_ic_i + (a_{i-2}a_{i+1})(a_{i-1}a_{i+1}) + \\
 & - (a_{i-2}b_i)(a_{i-1}c_i-a_{i+1}c_{i+2}) - (a_{i-2}a_{i+1})(b_ic_{i+2})
\end{alignat*}
Then, Lemmas \ref{lem_special_cases} and \ref{lem_i_plus_one} finish the proof.
\end{proof}

\begin{lemma} \label{lem_five_0}
For all \(x,y\in\{a,b,c\}\), and for all indices \(i,j\), the monomial \(x_iy_ja_jb_jc_j\) is in \(\Ia^2\). 
\end{lemma}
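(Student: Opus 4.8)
The plan is to work modulo $\Ia^2$ (writing $\equiv$, as elsewhere in the section) and to split on the cyclic distance between $i$ and $j$. Throughout I would use that a quadric $x_py_q$ with $p,q$ neither equal nor adjacent lies in $\Ia$ (so a product of two such quadrics lies in $\Ia^2$, as does the product of one of them with any element of $\Ia$); that the differences $a_jb_j-c_{j+1}^2$, $a_jb_j-c_{j-1}c_j$, $a_j^2-b_{j-1}c_{j-1}$, $a_ja_{j+1}-b_{j+1}c_{j+1}$, $a_ja_{j+1}-a_{j+2}^2$ and all their translates and images under permutations of $\{a,b,c\}$ lie in $\Ia$; that $a_\ell^4\equiv 0$ (shown in the proof of Lemma \ref{lem_special_cases}); and Lemma \ref{lem_first}. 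I would also record the basic shift: $a_jb_jc_j$ differs by an element of $\Ia$ from each ``socle monomial'' $a_ka_{k+1}^2$, $b_kb_{k+1}^2$, $c_kc_{k+1}^2$, since $a_jb_jc_j-a_ja_{j+1}^2=a_j(b_jc_j-a_{j+1}^2)\in a_j\Ia$ and $a_ka_{k+1}^2-a_{k+1}a_{k+2}^2=a_{k+1}(a_ka_{k+1}-a_{k+2}^2)\in a_{k+1}\Ia$ (and similarly for the letters $b,c$).

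When $i$ and $j$ are at distance at least $2$ this finishes quickly: $x_iy_j\in\Ia$, and since $m\geq 6$ I can pick $k$ with $k$ and $k+1$ both at distance $\geq 2$ from $i$ and from $j$; then $x_iy_ja_jb_jc_j-x_iy_j\,a_ka_{k+1}^2$ equals $x_iy_j$ times an element of $\Ia$, hence lies in $\Ia^2$, while $x_iy_j\,a_ka_{k+1}^2=(x_ia_k)(y_ja_{k+1})\,a_{k+1}$ is a product of two non-adjacent quadrics times a variable and so also lies in $\Ia^2$.

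When $i$ and $j$ are at distance at most $1$ the monomial is supported on the three consecutive indices $j-1,j,j+1$, so only finitely many monomials remain up to translation and permutation of $\{a,b,c\}$. Here I would first reduce the degree-$4$ tail: $y_ja_jb_jc_j\equiv 2\,C$ for a special cubic $C$ — this comes from the congruence $a_j^2b_jc_j\equiv a_j^2a_{j+1}^2\equiv 2\,a_ja_{j+1}a_{j+2}^2$ in the proof of Lemma \ref{lem_special_cases}, transported by a permutation of $\{a,b,c\}$. In characteristic $2$ the same computation gives $y_ja_jb_jc_j\equiv 0$ and we are done; otherwise it remains to show $x_iC\in\Ia^2$ for the finitely many relevant $i$ and $C$. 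Each such case I would carry out by rewriting the monomial with the quadratic relations until every summand either factors as a product of two elements of $\Ia$, or is a fourth power $a_\ell^4$, or has the shape $a_pb_{p+1}t$ with $t\in\Ia$ and is absorbed by Lemma \ref{lem_first}; the mechanism is that a chain $z_\ell z_{\ell+1}\equiv p_{\ell+1}q_{\ell+1}\equiv z_{\ell+2}^2$ walks an index out of the cluster $\{j-1,j,j+1\}$ step by step until a non-adjacent pair of indices appears and its quadric can be peeled off, which is where $m\geq 6$ enters.

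I expect this last step to be the only real obstacle. In the degree-$4$ statements it was enough to land in $\Ja$, so the special cubics could be quoted as generators of $\Ja$ over $\Ia^2$ and never taken apart; here one must reach $\Ia^2$ itself, which forces the special cubics to be dismantled. That dismantling is elementary but somewhat lengthy, and it is what gives the lemma its content.
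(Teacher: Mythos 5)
The proposal correctly sets the framework (work mod $\Ia^2$, split by the cyclic distance of $i$ from $j$, and use the congruence $a_jb_jc_j\equiv a_ka_{k+1}^2$ modulo $\Ia$), but it is not a complete proof, and it misses the one observation that makes the paper's argument short. The paper notes that the \emph{cubic} monomial $a_j^2b_j$ already lies in $\Ia$ — not merely in $\Ann\Fa$. (Concretely, $a_j^2b_j = b_j(a_j^2-b_{j-1}c_{j-1}) + b_{j-1}(b_jc_{j-1}-b_kc_{k+1}) + (b_{j-1}b_k)c_{k+1}$ for a far-away $k$, so it is a combination of quadrics of $\Ia$ times linear forms.) With $y=a$ the target monomial is $x_i a_j^2 b_j c_j$, which factors as $(x_ic_j)(a_j^2b_j)$; when $i\notin\{j-1,j,j+1\}$ both factors lie in $\Ia$, finishing that case in one line, and for $i\in\{j-1,j+1\}$ one absorbs $x_ic_j$ via the translation relation $x_ic_j-x_{i+3}c_{j+3}\in\Ia$. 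Only the two quartics $a_i^3b_ic_i$ and $a_i^2b_i^2c_i$ with $i=j$ need explicit rewritings, each a two- or three-term decomposition into products of elements of $\Ia$. Because you never use $a_j^2 b_j\in\Ia$, you are forced into the socle-cubic detour $y_ja_jb_jc_j\equiv 2\,a_ja_{j+1}a_{j+2}^2$ and then left with the claim $x_i a_ja_{j+1}a_{j+2}^2\in\Ia^2$ for $i\in\{j-1,j,j+1\}$ and $x\in\{a,b,c\}$, which you yourself flag as ``the only real obstacle'' and do not carry out. That step is precisely the content of the lemma, so as written the proposal has a genuine gap.

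Two smaller points. In the distance-$\ge 2$ case, the condition ``pick $k$ with $k$ and $k+1$ both at distance $\ge 2$ from $i$ and from $j$'' is not always satisfiable for $m=6,7,8$ (the forbidden set can cover all of $\{1,\dots,m\}$); you must instead allow the more flexible factorizations $(x_ia_k)(y_ja_{k+1})a_{k+1}$, $(x_ia_{k+1})(y_ja_k)a_{k+1}$, or $(x_ia_{k+1})(y_ja_{k+1})a_k$, one of which always works for $|i-j|\ge 2$ and $m\ge 6$. And the characteristic-$2$ aside, while correct, does not help, since Theorem \ref{thm_main} is stated over an arbitrary field and the lemma must be proved uniformly.
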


\begin{proof}
By symmetry we can assume \(y=a\). Note that \(a_j^2b_j\) annihilates \(\Fa\), so is in \(\Ia\). If \(i\notin\{j-1,j,j+1\}\), then \(x_ic_j\in \Ia\), so \(x_ia_j^2b_jc_j=(x_ic_j)(a_j^2b_j)\in \Ia^2\). Now suppose \(i\in\{j-1,j,j+1\}\). Either \(x\ne b\) or \(x\ne c\), by symmetry we can assume \(x\ne c\). Then, in the case \(i\in\{j-1,j+1\}\), we obtain \(x_ia_j^2b_jc_j=(x_ic_j-x_{i+3}c_{j+3})(a_j^2b_j)+(x_{i+3}a_j)(a_jc_{j+3})b_j \in \Ia^2\). In the case \(i=j\) we need to consider monomials \(a_i^3b_ic_i\) and \(a_i^2b_i^2c_i\). We rewrite them as follows.
\begin{alignat*}{2}
a_i^3b_ic_i & = (a_i^2b_i)(a_ic_i-b_{i-1}b_i) + (a_ib_i^2)(a_ib_{i-1}-a_{i-2}b_{i-3}) + (a_{i-2}a_i)(b_{i-3}b_i)b_i \equiv 0 \\
a_i^2b_i^2c_i & = (a_i^2c_i)(b_i^2-b_{i-2}b_{i-1}) + (a_i^2b_{i-1})(b_{i-2}c_i)\equiv 0
\end{alignat*}
This finishes the proof.
\end{proof}

\begin{proposition} \label{prop_done_0}
The apolar algebra \(\Apolar(\Fa)\) satisfies the small tangent space condition.
\end{proposition}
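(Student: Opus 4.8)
The plan is to verify, for $I=\Ann(\Fa)$, the three conditions that constitute the small tangent space condition: that $\Apolar(\Fa)$ has Hilbert function $(1,n,n,1)$, that $H(\Sa/I^2)_4=n$, and that $H(\Sa/I^2)_5=0$; the vanishing of $H(\Sa/I^2)$ in degrees $\geq 6$ then follows automatically, exactly as in the proof of Proposition \ref{prop_tan_I}. For the Hilbert function, Theorem \ref{thm_gor_class} tells us $\Apolar(\Fa)$ is graded Gorenstein with $\Apolar(\Fa)\cong\Apolar(\Fa)^{\vee}[-3]$, so its Hilbert function is symmetric of the shape $(1,h,h,1)$ with $h\leq n$. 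Moreover $\Apolar(\Fa)_2\cong\Sa_2/\Ann(\Fa)_2$ is the image of the contraction map $\Sa_2\to P_1$, and that map is surjective because $b_ic_i\cdot\Fa=a_i$, $a_ic_i\cdot\Fa=b_i$ and $a_ib_i\cdot\Fa=c_i$ for every $i$; hence $h=\dim_{\bk}P_1=n$ and the Hilbert function is $(1,n,n,1)$.

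For degree $4$, the inequality $H(\Sa/I^2)_4\geq n$ is Proposition \ref{prop_low_bound} (applicable now that the Hilbert function is known). For the reverse inequality, Proposition \ref{prop_0} gives $\Sa_4\subseteq\Ja$, and in degree $4$ one has $\Ja_4=(\Ia^2)_4+W$, where $W$ is the $\bk$-span of the $3m=n$ monomials $a_ia_{i+1}a_{i+2}^2$, $b_ib_{i+1}b_{i+2}^2$, $c_ic_{i+1}c_{i+2}^2$. Since $\Ia\subseteq\Ann(\Fa)=I$ this yields $\Sa_4\subseteq(I^2)_4+W$, whence $H(\Sa/I^2)_4\leq\dim_{\bk}W\leq n$. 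Therefore $H(\Sa/I^2)_4=n$.

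For degree $5$ the goal is $\Sa_5\subseteq I^2$. Since $\Ia^2$ is an ideal contained in $I^2$ and $\Sa_4\subseteq\Ja$, we get $\Sa_5=\Sa_1\Sa_4\subseteq\Sa_1\Ja\subseteq I^2+\Sa_1W$, so it suffices to show that $x_t\cdot q\in I^2$ for every variable $x_t$ and every quartic $q$ generating $W$; by the permutation symmetry of $\Ia$ and $\Ann(\Fa)$ we may take $q=a_ia_{i+1}a_{i+2}^2$. When $2$ is invertible in $\bk$ this is clean: the computation inside the proof of Lemma \ref{lem_special_cases} shows $a_i^2b_ic_i-2\,a_ia_{i+1}a_{i+2}^2\in\Ia^2$, so multiplying by $x_t$ gives $2\,x_t\,a_ia_{i+1}a_{i+2}^2\equiv x_t\,a_i^2b_ic_i=(x_ta_i)(a_ib_ic_i)\pmod{\Ia^2}$, and the right-hand monomial lies in $\Ia^2$ by Lemma \ref{lem_five_0} (it has the form $x_iy_ja_jb_jc_j$ with $y=a$). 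In arbitrary characteristic that identity degenerates, and one instead disposes of these finitely many monomials by a direct case check, using the explicit description of $\Ann(\Fa)$ in degrees $\leq 3$ (for instance $a_ia_{i+2}$ and $a_ib_{i+1}$ lie in $\Ann(\Fa)_2$, and $a_ia_{i+1}a_{i+2}$, $a_{i+2}b_ib_{i+1}$ lie in $\Ann(\Fa)_3$, which lets one factor each such monomial, after peeling off lower-order terms, as a product of two elements of $\Ann(\Fa)$). Either way $\Sa_5\subseteq I^2$ and $H(\Sa/I^2)_5=0$.

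Combining the three parts gives the small tangent space condition. Essentially all of the work sits in Proposition \ref{prop_0} together with the monomial-manipulation lemmas it rests on, and in Lemma \ref{lem_five_0}; the only care needed in assembling Proposition \ref{prop_done_0} itself is to keep the reductions honest — the cited congruences must hold modulo $\Ia^2$, not merely modulo $\Ja$, and must survive multiplication by a variable — together with the small characteristic-$2$ bookkeeping in the degree-$5$ step, which is the one point I expect to be slightly fiddly.
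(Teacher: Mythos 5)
Your proposal follows the paper's own proof nearly step for step: the Hilbert function is pinned down via Gorenstein symmetry, the degree-4 bound is Proposition \ref{prop_low_bound} together with Proposition \ref{prop_0}, and the degree-5 vanishing is reduced to Lemma \ref{lem_five_0}. The one superficial difference in the first step is harmless: the paper checks \(\Ann(\Fa)_1=0\) and you instead check that \(\Sa_2\to P_1\) is surjective; by the Gorenstein symmetry \(H_1=H_2\) these are equivalent.

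What is genuinely different, and worth highlighting, is your explicit handling of the degree-5 step. The paper disposes of it in one clause, asserting that the monomials \(x_ia_ib_ic_i\) generate \((\Sa/\Ia^2)_4\) and then invoking Lemma \ref{lem_five_0}. You unfold this: by Proposition \ref{prop_0}, \((\Sa/\Ia^2)_4\) is spanned by the images of \(a_ia_{i+1}a_{i+2}^2\) and its \(b,c\)-analogues, and the identity from the proof of Lemma \ref{lem_special_cases} gives \(a_i^2b_ic_i\equiv 2\,a_ia_{i+1}a_{i+2}^2\pmod{\Ia^2}\), so when \(2\) is invertible these two spans coincide and Lemma \ref{lem_five_0} finishes. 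You are right to flag that this breaks in characteristic \(2\): there \(a_i^2b_ic_i\in\Ia^2\) while \(a_ia_{i+1}a_{i+2}^2\notin\Ia^2\) (it cannot be, since \(H(\Sa/\Ia^2)_4=n\)), so the paper's stated generating claim is actually false in characteristic \(2\). This is a real observation about the published argument, not a defect you introduced.

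The gap in your proposal is precisely the characteristic-\(2\) branch: you say one "disposes of these finitely many monomials by a direct case check" and gesture at factoring through elements of \(\Ann(\Fa)_{\le 3}\), but you do not carry it out, and it is not obviously a finite check until one first reduces modulo index translation. Also, a small imprecision: \((x_ta_i)(a_ib_ic_i)\) is not itself a factorization in \(\Ia^2\) (since \(a_ib_ic_i\notin\Ia\)); what you mean is that the monomial \(x_ta_i^2b_ic_i\) lies in \(\Ia^2\) by Lemma \ref{lem_five_0} applied with \(y=a\), \(j=i\). To actually close the characteristic-\(2\) case one should show directly that \(x_t\,a_ia_{i+1}a_{i+2}^2\in\Ia^2\) for every variable \(x_t\). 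This can be done: for \(t\notin\{i+1,i+2,i+3\}\) write it as \((x_ta_{i+2})(a_ia_{i+1}a_{i+2})\), both factors in \(\Ia\) (using \(a_ia_{i+2}\in\Ia\)); for \(t=i+3\) use \((x_{i+3}a_{i+1})(a_ia_{i+2}^2)\); and for \(t\in\{i+1,i+2\}\) peel off a binomial such as \(a_{i+1}a_{i+2}-b_{i+2}c_{i+2}\in\Ia\) and then appeal to Lemma \ref{lem_five_0} for the remaining monomial \(x_s\,y_{i+2}\,a_{i+2}b_{i+2}c_{i+2}\). So the repair is within reach, but as written your degree-\(5\) argument only covers \(\mathrm{char}\,\bk\ne 2\) — the same restriction the paper's own wording tacitly carries.
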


\begin{proof}
It is easy to check that no linear form annihilates \(\Fa\), hence \(\Apolar(\Fa)\) has Hilbert function \((1,n,n,1)\). Proposition \ref{prop_0} implies that \(H(\Sa/\Ia^2)_4\leq n\), so \(H(\Sa/\Ann(\Fa)^2)\leq n\). Thus, by Proposition \ref{prop_low_bound}, \(H(\Sa/\Ann(\Fa)^2)_4=n\). Finally, since monomials of the form \(x_ia_ib_ic_i\) generate \((\Sa/\Ia^2)_4\) Lemma \ref{lem_five_0} implies that \(H(\Sa/\Ia^2)_5=0\), so also \(H(\Sa/\Ann(\Fa)^2)=0\).
\end{proof}

\subsection{Proof of Theorem \ref*{thm_main}; case \(n=3m+1\)}
\hfill

Let \(\Sb=\bk[a_i,b_i,c_i,d]_{i=1}^m\) be a polynomial ring of \(n=3m+1\) variables. Recall that we assume \(m\geq 6\). When writing indices we treat them modulo \(m\).

Consider the following polynomial
\[\Fb=\sum_{i=1}^m a_ib_ic_i + a_ia_{i+1}^2 + b_ib_{i+1}^2 + c_ic_{i+1}^2 + a_ib_{i+1}d.\]
Also by \(\Fb\) we denote its dual element in the divided power ring associated to \(\Sb\).

Let \(\Ib\) be the smallest ideal such that the following remark holds.

\begin{remark}
For all \(x,y\in\{a,b,c\}\), and for all indices \(i,j\), if \(j\notin\{i-1,i,i+1\}\), then \(x_iy_j\in \Ib\).

For all \(x,y\in\{a,b,c\}\), and for all indices \(i,j\), if \(x\ne y\), then \(x_iy_{i+1}-x_jy_{j+1}\in \Ib\).

For all \(x,y\in\{a,b,c\}\), and for all indices \(i,j\), one of the following holds.
\begin{enumerate}
\item \(x_iy_j\) is contained in \(\Ib\).
\item \(x\ne y\) and \(j\in\{i-1,i+1\}\).
\item There exists an index \(k\) and \(p,q\in\{a,b,c\}\), \(p\ne q\), such that \(x_iy_j-p_kq_k\in \Ib\).
\end{enumerate}

For any \(x\in\{a,b,c\}\), and any index \(i\), one of the following holds.
\begin{enumerate}
\item \(x_id\) is contained in \(\Ib\).
\item There exists an index \(j\) and \(p,q\in\{a,b,c\}\), \(p\ne q\), such that \(x_id-p_jq_j\in \Ib\).
\end{enumerate}
\end{remark}

The polynomial \(\Fb\) is chosen such that \(\Ib\subset\Ann\,\Fb\).

Let \(\Jb\) denote the ideal \((\Ib)^2 + \langle a_ia_{i+1}a_{i+2}^2,b_ib_{i+1}b_{i+2}^2,c_ic_{i+1}c_{i+2}^2 \,|\, i=1,...,m\rangle+\langle a_1b_1c_1d\rangle\). Note that \(\Fb,\Ib\), and \(\Jb\) are invariant under index translation.

We want to show that \(\Apolar(\Fb)\) satisfies the small tangent space condition. The main part of the proof is checking that all polynomials of degree \(4\) are contained in \(\Jb\), hence that \(H(\Sb/(\Ib)^2)_4=n\).

In this section, we use the following notation. For polynomials \(Q,R\in \Sb\) we write \(Q\equiv R\) if \(Q\) is equal to \(R\) in \(\Sb/(\Ib)^2\).

\begin{lemma} \label{lem_no_d}
All monomials of degree \(4\), not divisible by \(d\) are contained in \(\Jb\).
\end{lemma}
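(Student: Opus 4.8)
The plan is to reduce Lemma \ref{lem_no_d} to the already-established case $n=3m$. The point is that a degree $4$ monomial in $\Sb$ not divisible by $d$ is a degree $4$ monomial in the subring $\Sa=\bk[a_i,b_i,c_i]_{i=1}^m$, and one checks directly from the two Remarks that the ideal $\Ia\Sb$ (generated inside $\Sb$ by the generators listed for $\Ia$) is contained in $\Ib$. Indeed, the first three bullet points of the Remark defining $\Ib$ are, letter for letter, the three defining conditions of $\Ia$ — the extra polynomial $a_ib_{i+1}d$ added to $\Fb$ only introduces the new relations involving $d$ and the shift between $a_i b_{i+1}$ and the other products of distinct letters, so it does not remove any generator of $\Ia$ from $\Ann\Fb$. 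Hence $\Ia\Sb\subset\Ib$, and consequently $\Ja\Sb\subset\Jb$ as well, since the generators $a_ia_{i+1}a_{i+2}^2$, $b_ib_{i+1}b_{i+2}^2$, $c_ic_{i+1}c_{i+2}^2$ appearing in $\Ja$ are exactly those added to form $\Jb$.

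Granting this, Proposition \ref{prop_0} applies verbatim: every degree $4$ homogeneous polynomial of $\Sa$ lies in $\Ja$, hence in $\Jb$. The only subtlety is that Proposition \ref{prop_0} and the lemmas feeding into it (Lemmas \ref{lem_index_0}, \ref{lem_first}, \ref{lem_special_cases}, \ref{lem_i_plus_one}, \ref{lem_\Ia}) were proved \emph{inside} $\Sa/\Ia^2$, so one must make sure that the congruences $\equiv$ used there remain valid when read inside $\Sb/(\Ib)^2$. But every such congruence was an identity of the form ``(degree 4 monomial) $=$ (explicit $\Sa$-combination of products of two generators of $\Ia$) $+$ (monomials already known to lie in $\Ja$)''; since $\Ia\Sb\subset\Ib$ these same identities, viewed in $\Sb$, exhibit the monomial as an element of $(\Ib)^2+\langle a_ia_{i+1}a_{i+2}^2,\dots\rangle\subset\Jb$. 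So no recomputation is needed — one simply invokes Proposition \ref{prop_0} together with the inclusion $\Ja\Sb\subset\Jb$.

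Concretely, I would write: let $M$ be a degree $4$ monomial in $\Sb$ not divisible by $d$; then $M$ is a degree $4$ monomial in the variables $\{a_i,b_i,c_i\}$, so $M\in\Sa$. Check from the Remarks that each generator of $\Ia$ listed in the $n=3m$ section is also a generator of $\Ib$ (the three bullet conditions match), whence $\Ia\subset\Ib\cap\Sa$ and in particular $\Ia\Sb\subset\Ib$ and $\Ja\Sb\subset\Jb$. By Proposition \ref{prop_0}, $M\in\Ja$, and therefore $M\in\Ja\Sb\subset\Jb$.

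The main obstacle is a bookkeeping one rather than a conceptual one: I must carefully confirm that passing from $\Fa$ to $\Fb$ genuinely only \emph{adds} relations and never destroys a relation that $\Ia$ relied on — e.g. that $a_i^2 b_i$ still annihilates $\Fb$ (it does, since the new term $a_ib_{i+1}d$ contributes nothing under $a_i^2 b_i$), and similarly that $x_i y_j$ for $j\notin\{i-1,i,i+1\}$ still kills $\Fb$ including its new term. Once these compatibility checks are in place, the lemma is immediate.
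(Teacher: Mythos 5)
Your argument is correct and follows the paper's proof exactly: use the inclusion of rings \(\Sa\subset\Sb\), observe \(\Ia\subset\Ib\cap\Sa\) (hence \(\Ja\subset\Jb\cap\Sa\)), and apply Proposition~\ref{prop_0} to conclude that the monomial, which lies in \(\Sa\), belongs to \(\Ja\subset\Jb\). The worry in your second paragraph about re-validating the congruences inside \(\Sb/(\Ib)^2\) is a detour you do not need: Proposition~\ref{prop_0} is a membership statement (\(M\in\Ja\)), so the inclusion \(\Ja\subset\Jb\) alone finishes the job.
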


\begin{proof}
We have a natural inclusion of rings \(\Sa\subset \Sb\). Note that \(\Ia\subset \Ib\cap \Sa\), so also \(\Ja\subset \Jb\cap \Sa\). Since every monomial not divisible by \(d\) is contained in \(\Sa\), the claim follows from Proposition \ref{prop_0}.
\end{proof}

\begin{lemma} \label{lem_1d}
For all \(x,y,z\in\{a,b,c\}\), and for all indices \(i,j,k\), the monomial \(x_iy_jz_kd\) is contained in \(\Jb\).
\end{lemma}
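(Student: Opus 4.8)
\emph{Strategy.} I would first reduce to the case in which the $d$-free part $x_iy_jz_k$ does not already lie in $\Ia$. Suppose $x_iy_jz_k\in\Ia$. Since $\Ia$ has no elements of degree $\le1$ and is generated by its degree-$2$ part, we may write $x_iy_jz_k=\sum_t v_tg_t$ with each $v_t$ a variable of $\Sa$ and each $g_t\in(\Ia)_2\subseteq(\Ib)_2$; hence $x_iy_jz_kd=\sum_t(v_td)g_t$. By the last clause of the Remark, each $v_td$ is either in $\Ib$, so that $(v_td)g_t\in(\Ib)^2\subseteq\Jb$, or satisfies $v_td-p_sq_s\in\Ib$ for some distinct $p,q\in\{a,b,c\}$, so that $(v_td)g_t=(v_td-p_sq_s)g_t+p_sq_sg_t$ with $(v_td-p_sq_s)g_t\in(\Ib)^2\subseteq\Jb$ and $p_sq_sg_t$ an element of $\Sa$ of degree $4$, hence in $\Ja\subseteq\Jb$ by Proposition \ref{prop_0} (recall $\Ja\subseteq\Jb$ from the proof of Lemma \ref{lem_no_d}). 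Thus $x_iy_jz_kd\in\Jb$ whenever $x_iy_jz_k\in\Ia$.

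\emph{Reducing to a local configuration.} Assume from now on that $x_iy_jz_k\notin\Ia$. As $\Ia$ contains every $x_iy_j$ with $j\notin\{i-1,i,i+1\}$, the indices $i,j,k$ must be pairwise within cyclic distance $1$; and since $m\ge6$ the values $\ell-1$ and $\ell+1$ are at cyclic distance $2$, so this forces $\{i,j,k\}\subseteq\{\ell,\ell+1\}$ for a single $\ell$. By the translation invariance of $\Ib$ and $\Jb$ we may take $\ell=1$, and only the finitely many monomials $x_iy_jz_kd$ with $x,y,z\in\{a,b,c\}$ and $i,j,k\in\{1,2\}$ remain.

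\emph{The remaining monomials.} These I would dispatch one at a time, in the style of Lemmas \ref{lem_special_cases} and \ref{lem_i_plus_one}. The monomial $a_1b_1c_1d$ is a generator of $\Jb$ by definition, and by the translation invariance of $\Jb$ so is every $a_ib_ic_id$. For each of the rest I would produce an identity writing it as a sum of products of two elements of $\Ib$ (lying in $(\Ib)^2\subseteq\Jb$), elements of $\Sa$ of degree $4$ (absorbed into $\Jb$ via Proposition \ref{prop_0} as above), and the generators $a_ib_ic_id$ and $x_ix_{i+1}x_{i+2}^2$ of $\Jb$. The rewritings draw on the degree-$2$ generators of $\Ib$: the ``far'' monomials $x_iy_j$ with $j\notin\{i-1,i,i+1\}$; the ``consecutive'' binomials $x_iy_{i+1}-x_jy_{j+1}$ with $x\ne y$; the ``diagonal'' binomials $x_i^2-y_{i-1}z_{i-1}$ and $x_{i-1}x_i-y_iz_i$ with $\{x,y,z\}=\{a,b,c\}$; and the ``$d$'' binomials, for instance $a_id-a_{i+1}c_{i+1}$, $b_id-b_{i-1}c_{i-1}$, and $c_id-a_ic_{i+1}$, each of which is readily checked to annihilate $\Fb$.

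\emph{Main obstacle.} The crux is precisely this last, finite, step. Just as in the proof of Lemma \ref{lem_i_plus_one}, the obstruction to a uniform argument is that in the local configuration \emph{none} of the three pairwise products $x_iy_j$, $x_iz_k$, $y_jz_k$ belongs to $\Ib$, so one cannot simply split off a factor in $\Ib$; and multiplying a $d$-binomial of $\Ib$ by a degree-$2$ monomial outside $\Ib$ need not land in $(\Ib)^2$. Consequently each remaining monomial demands its own rewriting. I would organise the casework by the index pattern --- up to translation, $\{1,1,1\}$, $\{1,1,2\}$, or $\{1,2,2\}$ --- and by the multiset of letters, in each case pairing $d$ with the factor for which substituting the relevant $d$-binomial leaves a non-$d$ remainder that Proposition \ref{prop_0} can absorb; the rest is routine bookkeeping.
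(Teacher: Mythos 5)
Your reduction framework is sound and broadly parallels the paper's, but the proof is not complete: the ``last, finite, step'' that you explicitly defer is where essentially all the content of the lemma lives, and it is larger and less routine than you suggest. The paper's proof makes a sharper reduction before the casework: instead of asking whether the $d$-free part $x_iy_jz_k$ lies in $\Ia$, it examines the three pairwise products $x_iy_j$, $x_iz_k$, $y_jz_k$ directly, and uses the translation binomials $u_sv_{s+1}-u_tv_{t+1}\in\Ib$ (the second clause of the Remark) to move any translation-type factor $u_sv_{s+1}$ into a position where one of the other two pairwise products becomes a far-index monomial in $\Ib$. This extra step collapses all cases with two distinct letters and mixed indices, so the genuine residue is only $x_i^3d$, $x_i^2x_{i+1}d$, $x_ix_{i+1}^2d$, and the mixed-letter same-index monomials reducing to $a_ib_ic_id$. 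By contrast, your reduction leaves all of the roughly $3^3\times2^3$ local monomials $x_iy_jz_kd$ with $i,j,k\in\{1,2\}$ (further thinned only by the inclusion $x_iy_jz_k\in\Ia$, which you do not compute), and $\Ib$ and $\Jb$ are not symmetric in $a,b,c$, so there is no letter symmetry to quotient by. You correctly flag the obstruction yourself: since none of the pairwise products lies in $\Ib$ in the residual configuration, the naive tactic of ``pairing $d$ with one factor'' does not place anything in $(\Ib)^2$; each of these monomials needs a bespoke algebraic identity, exactly as the paper supplies for $a_i^3d$, $b_i^3d$, $c_i^3d$ and the reduction of $a_ib_ic_id$ to $a_1b_1c_1d$. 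Without producing those identities the argument is a plan, not a proof.

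Two smaller points. First, your claim that $x_iy_jz_k\in\Ia$ implies a decomposition $\sum_t v_t g_t$ with $v_t$ a variable of $\Sa$ and $g_t\in(\Ia)_2$ is correct (since $\Ia$ is homogeneous and generated in degree $2$, $(\Ia)_3=\Sa_1\cdot(\Ia)_2$), and the ensuing reduction via the Remark's clause on $v_td$ together with Proposition~\ref{prop_0} is a clean, valid argument that replaces the paper's first paragraph; that part is fine. Second, one of your sample ``$d$-binomials'' is wrong: $c_id$ already annihilates $\Fb$ (no term of $\Fb$ is divisible by $c_id$), so $c_id\in\Ib$ by clause (1), not $c_id-a_ic_{i+1}$ by clause (2); the paper uses this, e.g.\ $c_id^3=(c_id)(d^2)$.
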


\begin{proof}
If any of \(x_iy_j\), \(x_iz_k\), \(y_jz_k\) is in \(\Ib\), say \(x_iy_j\in \Ib\), then either \(z_kd\in \Ib\), so that \(x_iy_jz_kd\in (\Ib)^2\), or there exist \(p,q\in\{a,b,c\}\) and an index \(t\) such that \(z_kd-p_tq_t\in \Ib\), so \(x_iy_jz_kd= (x_iy_j)(z_kd-p_tq_t)+x_iy_jp_tq_t\). Monomial \(x_iy_jp_tq_t\) is contained in \(\Jb\) by Lemma \ref{lem_no_d}.

If any of \(x_iy_j\), \(x_iz_k\), \(y_jz_k\) is of the form \(w_tv_{t+1}\), \(w\ne v\), say \(j=i+1\), \(x\ne y\), then either \(z_kd\in \Ib\) and we can rewrite \(x_iy_{i+1}z_kd=(x_iy_{i+1}-x_{k+1}y_{k+2})(z_kd) + x_{k+1}y_{k+2}z_{k}d\), or \(z_kd\notin \Ib\) and there exist \(p,q\in\{a,b,c\}\) and an index \(t\) such that \(z_kd-p_tq_t\in \Ib\), hence \(x_iy_{i+1}z_kd=(x_iy_{i+1}-x_{k+1}y_{k+2})(z_kd-p_tq_{t}) + x_{k+1}y_{k+2}z_{k}d + x_iy_{i+1}p_tq_{t} - x_{k+1}y_{k+2}p_tq_t\). In any case the claim follows by the previous paragraph and Lemma \ref{lem_no_d}.

It remains to consider the case where either \(x=y=z\) and \(j,k\in\{i,i+1\}\), or \(i=j=k\). We first consider the case where \(i=j=k\) and not all \(x,y,z\) are the same. If \(x,y,z\) are not mutually different, say \(x=y\), then since \(y\ne z\) there exists \(w\in\{a,b,c\}\), \(w\ne x\), such that \(y_iz_i-w_{i-1}w_i\in \Ib\). Hence, if \(x_id\in \Ib\) we get \(x_iy_iz_id = (x_id)(y_iz_i-w_{i-1}w_i) + x_iw_{i-1}w_id\), and if \(x_id\notin \Ib\), then there are \(p,q\in\{a,b,c\}\) and an index \(s\) such that \(x_id-p_sq_s\in \Ib\), so \(x_iy_iz_id = (x_id-p_sq_s)(y_iz_i-w_{i-1}w_i) + x_iw_{i-1}w_id + y_iz_ip_sq_s - w_{i-1}w_ip_sq_s\). Thus, the claim follows by the previous parts of the proof and Lemma \ref{lem_no_d}. Now we consider the case where \(x,y,z\) are mutually different, hence we need to examine the monomial \(a_ib_ic_id\). We rewrite it as follows.
\begin{alignat*}{2}
a_ib_ic_id & = (a_ib_i-c_{i+1}^2)(c_id) + (c_ic_{i+1}-a_{i+1}b_{i+1})(c_{i+1}d) + a_{i+1}b_{i+1}c_{i+1}d
\end{alignat*}
Therefore, \(a_ib_ic_id\equiv a_{i+1}b_{i+1}c_{i+1}d\), and so \(a_ib_ic_id \equiv a_1b_1c_1d\in \Jb\).

We now consider the case where \(x=y=z\). If \(i=j=k\), then there are three monomials to consider, \(a_i^3d\), \(b_i^3d\), and \(c_i^3d\). We rewrite them in the following way.
\begin{alignat*}{2}
a_i^3d & = (a_i^2-b_{i-1}c_{i-1})(a_id-a_{i+1}c_{i+1}) + a_i^2a_{i+1}c_{i+1} + \\
 & + (a_ib_{i-1})(c_{i-1}d) - (a_{i+1}b_{i-1})(c_{i-1}c_{i+1}) \equiv 0 \\
b_i^3d & = (b_i^2-a_{i-1}c_{i-1})(b_id-a_i^2) + a_i^2b_i^2 + a_{i-1}b_ic_{i-1}d + \\
& - a_{i-1}a_i^2c_{i-1} \equiv 0 \\
c_i^3d & = (c_i^2-c_{i-2}c_{i-1})(c_id) + (c_{i-2}c_i)(c_{i-1}d) \equiv 0
\end{alignat*}
Hence, Lemma \ref{lem_no_d} and previous parts of the proof apply. Now suppose \(x=y=z\) and at least one of \(j,k\) is \(i+1\), say \(j= i+1\), hence there exist \(p,q\in\{a,b,c\}\), \(p\ne q\), and an index \(t\) such that \(x_ix_{i+1}-p_tq_t\in \Ib\). Then, either \(x_kd\in \Ib\) and we get \(x_ix_{i+1}x_kd=(x_ix_{i+1}-p_tq_t)(x_kd) + x_kp_tq_td\), or there exist \(w,v\in\{a,b,c\}\) and an index \(s\) such that \(x_kd-w_sv_s\in \Ib\), so we get \(x_ix_{i+1}x_kd=(x_ix_{i+1}-p_tq_t)(x_kd-w_sv_s) + x_ix_{i+1}w_sv_s+x_kp_tq_td-p_tq_tw_sv_s\). Either case follows from the previous parts of the proof and Lemma \ref{lem_no_d}.
\end{proof}

\begin{lemma} \label{lem_2d}
For all \(x,y\in\{a,b,c\}\), and all indices \(i,j\), the monomial \(x_iy_jd^2\) is contained in \(\Jb\).
\end{lemma}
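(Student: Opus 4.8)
The plan is to factor $x_iy_jd^2$ as a product of the two degree-two elements $x_id$ and $y_jd$, and then reduce everything to the facts about degree-four monomials already established in Lemmas \ref{lem_no_d} and \ref{lem_1d} together with the inclusion $(\Ib)^2\subseteq\Jb$.

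The only structural input I would use is the last clause of the defining remark of $\Ib$: for every letter $x\in\{a,b,c\}$ and every index $i$ one can write $x_id = e + m$, where $e\in\Ib$ and $m$ is either $0$ or a monomial $p_kq_k$ with $p,q\in\{a,b,c\}$, $p\ne q$, and $k$ some index. First I would fix such decompositions $x_id = e+m$ and $y_jd = e'+m'$.

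Next I would expand the product. Since $x_iy_jd^2 = (x_id)(y_jd) = (e+m)(e'+m')$, a short rearrangement (substituting $e=x_id-m$ and $e'=y_jd-m'$) gives
\[x_iy_jd^2 = ee' + (x_id)\,m' + (y_jd)\,m - mm',\]
so it suffices to check that each of the four summands lies in $\Jb$. The term $ee'$ is a product of two elements of $\Ib$, hence lies in $(\Ib)^2\subseteq\Jb$. If $m'=0$ then $(x_id)m'=0$; otherwise $(x_id)m'$ is a degree-four monomial divisible by $d$ (with the remaining three factors in $\{a,b,c\}$), hence of the form treated by Lemma \ref{lem_1d}, so it lies in $\Jb$. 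The term $(y_jd)m$ is handled identically. Finally $mm'$ is either $0$ or a degree-four monomial not divisible by $d$, so it lies in $\Jb$ by Lemma \ref{lem_no_d}.

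I do not expect any genuine obstacle here: the argument is purely formal and offloads all the real work onto Lemmas \ref{lem_no_d} and \ref{lem_1d}. The only point requiring a little care is the bookkeeping of the cases $m=0$ versus $m\ne 0$ (and likewise for $m'$), so that the displayed identity is applied correctly in each branch and no summand is left unaccounted for; since $x_i,y_j$ and the factors of $m,m'$ are all linear, the degree count $x_iy_jd^2$, $(x_id)m'$, $(y_jd)m$, $mm'$ all of degree four is automatic.
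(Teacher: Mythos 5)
Your proof is correct and is essentially the paper's argument: both split off binomial representatives of $x_id$ and $y_jd$ from the defining remark of $\Ib$, expand the product, and reduce each summand to $(\Ib)^2$, Lemma \ref{lem_1d}, or Lemma \ref{lem_no_d}. The only (cosmetic) difference is that you allow $m$, $m'$ to be zero and so treat the three cases of the paper uniformly with one identity, whereas the paper writes them out separately.
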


\begin{proof}
If both \(x_id\) and \(y_jd\) are in \(\Ib\), then \(x_iy_jd^2\in (\Ib)^2\). If only one of them is in \(\Ib\), say \(x_id\in \Ib\), then there exist \(z,w\in\{a,b,c\}\) and an index \(k\) such that \(y_jd-z_kw_k\in \Ib\), so \(x_iy_jd^2 = (x_id)(y_jd-z_kw_k) + x_iz_kw_kd\). Finally, if \(x_id\notin \Ib\), \(y_jd\notin \Ib\), then there exist \(w,z,p,q\in\{a,b,c\}\) and indices \(k,t\) such that \(x_id-w_kz_k\in \Ib\) and \(y_jd-p_tq_t\in \Ib\), so \(x_iy_jd^2 = (x_id-p_tq_t)(y_jd-z_kw_k) + x_iz_kw_kd + y_jp_tq_td - z_kw_kp_tq_t\). Hence, the claim follows from Lemmas \ref{lem_no_d} and \ref{lem_1d}.
\end{proof}

\begin{proposition} \label{prop_1}
Every degree \(4\) homogenous polynomial of \(\Sb\) is contained in \(\Jb\).
\end{proposition}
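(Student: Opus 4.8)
The plan is to verify the claim monomial by monomial, sorting the degree-\(4\) monomials of \(\Sb\) by the exact power of \(d\) dividing them and handing each class off to one of Lemmas \ref{lem_no_d}, \ref{lem_1d}, and \ref{lem_2d}. Every degree-\(4\) monomial of \(\Sb\) is uniquely of the form \(d^kN\), where \(N\) is a monomial of degree \(4-k\) in the variables \(a_i,b_i,c_i\) alone and \(k\in\{0,1,2,3,4\}\). If \(k=0\) the monomial is \(N\in\Jb\) by Lemma \ref{lem_no_d}; if \(k=1\) it has the shape \(x_iy_jz_\ell d\) with \(x,y,z\in\{a,b,c\}\), so it lies in \(\Jb\) by Lemma \ref{lem_1d}; if \(k=2\) it has the shape \(x_iy_jd^2\) with \(x,y\in\{a,b,c\}\), so it lies in \(\Jb\) by Lemma \ref{lem_2d}. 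Since an arbitrary degree-\(4\) homogeneous element is a \(\bk\)-linear combination of such monomials, only the cases \(k=3\) and \(k=4\) remain.

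For those I would use that \(d^2\) annihilates \(\Fb\), so \(d^2\in\Ib\), together with the degree-\(2\) elements \(c_id\), \(a_id-a_{i+1}c_{i+1}\), and \(b_id-b_{i-1}c_{i-1}\) of \(\Ib\) coming from the last bullet of the defining remark. Then \(d^4=(d^2)^2\in(\Ib)^2\subseteq\Jb\), and \(c_id^3=(c_id)\,d^2\in(\Ib)^2\subseteq\Jb\) because \(c_id\in\Ib\). For \(x\in\{a,b\}\), writing \(a_id^3=(a_id-a_{i+1}c_{i+1})\,d^2+a_{i+1}c_{i+1}d^2\) (and symmetrically \(b_id^3=(b_id-b_{i-1}c_{i-1})\,d^2+b_{i-1}c_{i-1}d^2\)), the first summand lies in \(\Ib\cdot\Ib=(\Ib)^2\) and the second lies in \(\Jb\) by Lemma \ref{lem_2d}. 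This exhausts the classification, so every degree-\(4\) monomial — hence every degree-\(4\) homogeneous polynomial of \(\Sb\) — lies in \(\Jb\).

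I do not expect any step above to be the real difficulty; it is bookkeeping once Lemmas \ref{lem_no_d}--\ref{lem_2d} are in hand, and the genuine content is already packaged there: Lemma \ref{lem_no_d} transfers Proposition \ref{prop_0} (the long monomial-by-monomial reduction carried out in \(\Sa\)) essentially verbatim, and Lemmas \ref{lem_1d} and \ref{lem_2d} bootstrap from it by peeling off one factor of \(d\) at a time against the \(\Ib\)-generators \(x_id\) or \(x_id-p_jq_j\). The only thing to guard against in the proof of the proposition itself is making the classification by power of \(d\) genuinely exhaustive, so that the \(d\)-heavy monomials \(d^4\) and \(x_id^3\) — which fall outside the scope of Lemmas \ref{lem_no_d}--\ref{lem_2d} — are not silently dropped.
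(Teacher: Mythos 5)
Your proposal is correct and takes essentially the same approach as the paper: classify degree-\(4\) monomials by the power of \(d\) dividing them, dispatch \(k=0,1,2\) to Lemmas \ref{lem_no_d}, \ref{lem_1d}, \ref{lem_2d} respectively, and handle \(k=3,4\) by the same explicit rewritings \(a_id^3=(a_id-a_{i+1}c_{i+1})d^2+a_{i+1}c_{i+1}d^2\), \(b_id^3=(b_id-b_{i-1}c_{i-1})d^2+b_{i-1}c_{i-1}d^2\), \(c_id^3=(c_id)d^2\), \(d^4=(d^2)^2\). The only difference is cosmetic: you make the exhaustiveness of the case split by \(d\)-degree explicit, whereas the paper says ``the lemmas cover most cases'' and lists the four leftovers.
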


\begin{proof}
Lemmas \ref{lem_no_d}, \ref{lem_1d}, and \ref{lem_2d} cover most of the cases. It remains to check that for any \(x\in\{a,b,c\}\) and any index \(i\) both \(x_id^3\) and \(d^4\) are in \(\Jb\). Thus, we need to consider four monomials, which we rewrite as follows.
\begin{alignat*}{2}
a_id^3 & = (a_id-a_{i+1}c_{i+1})(d^2) + a_{i+1}c_{i+1}d^2 \\
b_id^3 & = (b_id-b_{i-1}c_{i-1})(d^2) + b_{i-1}c_{i-1}d^2 \\
c_id^3 & = (c_id)(d^2) \equiv 0 \\
d^4 & = (d^2)^2 \equiv 0
\end{alignat*}
Thus, Lemma \ref{lem_2d} finishes the proof.
\end{proof}

\begin{lemma} \label{lem_five_1}
For all \(x,y\in\{a,b,c\}\), and for all indices \(i,j\), the monomials \(x_iy_ja_jb_jc_j\), \(x_ia_jb_jc_jd\) and \(a_jb_jc_jd^2\) are in \((\Ib)^2\).
\end{lemma}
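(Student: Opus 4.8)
The plan is to handle the three families of monomials separately, reducing each to something already available. For $x_iy_ja_jb_jc_j$ there is nothing new: all its factors lie in the subring $\Sa\subset\Sb$, Lemma~\ref{lem_five_0} gives $x_iy_ja_jb_jc_j\in(\Ia)^2$, and the inclusion $\Ia\subseteq\Ib$ recorded in the proof of Lemma~\ref{lem_no_d} upgrades this to $x_iy_ja_jb_jc_j\in(\Ia)^2\subseteq(\Ib)^2$.

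For the two $d$-divisible families I would first collect two facts. First, the last clause of the Remark defining $\Ib$, combined with $\Ib\subseteq\Ann\,\Fb$, forces $c_id\in\Ib$ for every index $i$: no monomial $p_kq_k$ with $p\ne q$ annihilates $\Fb$, so alternative (2) of that clause cannot hold for the letter $c$ (it is the relations $a_id-a_{i+1}c_{i+1}\in\Ib$, $b_id-b_{i-1}c_{i-1}\in\Ib$ of Lemma~\ref{lem_1d} that hold for $a$ and $b$). Second, a degree-$3$ monomial of $\Sb$ lies in $\Ib$ exactly when it is not a term of $\Fb$; in particular $x_ia_jb_j\in\Ib$ whenever $(x,i)\ne(c,j)$, and $a_jb_jd$, $c_ja_{j+1}b_{j+1}$ and their index translates always lie in $\Ib$. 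This second fact can be read off from the identification $\Ib=\Ann\,\Fb$, or proved directly by moving the offending indices apart as in Lemma~\ref{lem_index_0} and Proposition~\ref{prop_0}.

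With these facts in hand, $a_jb_jc_jd^2=(a_jb_jd)(c_jd)$ is a product of two elements of $\Ib$ and so lies in $(\Ib)^2$. For $x_ia_jb_jc_jd$, the factorization $(x_ia_jb_j)(c_jd)$ lands in $(\Ib)^2$ in every case except $x=c$, $i=j$, that is, the monomial $a_jb_jc_j^2d$. Here $a_jb_jc_j\notin\Ib$, but its two terms $a_jb_jc_j$ and $a_{j+1}b_{j+1}c_{j+1}$ occur in $\Fb$ with the same coefficient, so their difference annihilates $\Fb$ and therefore lies in $\Ib$, which gives
\[a_jb_jc_j^2d=(c_jd)\left(a_jb_jc_j-a_{j+1}b_{j+1}c_{j+1}\right)+(c_{j+1}d)\left(c_ja_{j+1}b_{j+1}\right)\in(\Ib)^2.\]

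The heart of the difficulty is this socle monomial $a_jb_jc_j$: it is the one degree-$3$ monomial that is not in $\Ib$, so $x_ia_jb_jc_jd$ cannot be dispatched by the naive factorization $(x_id)(a_jb_jc_j)$, and the exceptional case must be steered around as above. Moreover, the auxiliary claim that the relevant non-socle degree-$3$ monomials really do lie in $\Ib$ is the one step that needs an honest index-shuffling argument, and that — like the proof of Lemma~\ref{lem_index_0} — is where the hypothesis $m\ge 6$ is genuinely used; the degree-$5$ bookkeeping itself is then routine.
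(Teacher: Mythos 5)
Your proposal is correct and follows the same skeleton as the paper's proof: reduce $x_iy_ja_jb_jc_j$ via the inclusion $\Sa\subset\Sb$ and Lemma~\ref{lem_five_0}; observe $c_jd\in\Ib$ and $a_jb_jd\in\Ib$; dispatch $a_jb_jc_jd^2$ as $(a_jb_jd)(c_jd)$; and dispatch $x_ia_jb_jc_jd$ for $x_i\ne c_j$ as $(x_ia_jb_j)(c_jd)$. The one genuinely different step is the exceptional monomial $a_jb_jc_j^2d$. The paper rewrites it as
\[
a_jb_jc_j^2d = (a_jb_jd)(c_j^2-c_{j-2}c_{j-1}) + (a_jc_{j-2})(c_{j-1}d)b_j,
\]
whereas you use
\[
a_jb_jc_j^2d=(c_jd)\bigl(a_jb_jc_j-a_{j+1}b_{j+1}c_{j+1}\bigr)+(c_{j+1}d)\bigl(c_ja_{j+1}b_{j+1}\bigr),
\]
exploiting the translation-invariance of the socle cubic $a_jb_jc_j$. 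Both decompositions are valid, and yours is arguably more conceptual; note that $a_jb_jc_j-a_{j+1}b_{j+1}c_{j+1}=(a_jb_j-c_{j+1}^2)c_j+(c_jc_{j+1}-a_{j+1}b_{j+1})c_{j+1}$ exhibits it as a product of degree-2 elements of $\Ib$ with linear forms, so it does lie in $\Ib$, and $c_ja_{j+1}b_{j+1}\in\Ib$ follows from $c_ja_{j+1}-c_ka_{k+1}\in\Ib$ plus $a_{k+1}b_{j+1}\in\Ib$ for $k$ far from $j$.

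One inaccuracy worth fixing: you appeal to ``the identification $\Ib=\Ann\,\Fb$,'' but the paper only asserts $\Ib\subset\Ann\,\Fb$, and the two are not equal (for instance $S_{\geq 4}\subset\Ann\,\Fb$, but $\Ib$ is generated in degree $2$). What you actually need is that the specific degree-$3$ elements above belong to the ideal generated by $(\Ann\,\Fb)_2$; this is true and requires the index-shuffling you mention, but it does not follow just from annihilating $\Fb$. The paper itself is terse on exactly this point (``$a_jb_jd$ annihilates $\Fb$, so is in $\Ib$''), so the two proofs carry the same implicit debt, but a careful write-up should supply the short verification rather than invoke an equality that does not hold.
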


\begin{proof}
That \(x_iy_ja_jb_jc_j\) is in \((\Ib)^2\) follows from the inclusion of rings \(\Sa\subset \Sb\) and Lemma \ref{lem_five_0}. Note that \(a_jb_jd\) annihilates \(\Fb\), so is in \(\Ib\). If \(x_i\ne c_j\), then \(x_ia_jb_j\in \Ib\), so \(x_ia_jb_jc_jd=(x_ia_jb_j)(c_jd)\in (\Ib)^2\). Thus, it remains to consider \(a_jb_jc_j^2d\) and \(a_jb_jc_jd^2\). We rewrite them as follows.
\begin{alignat*}{2}
a_jb_jc_j^2d & = (a_jb_jd)(c_j^2-c_{j-2}c_{j-1}) + (a_jc_{j-2})(c_{j-1}d)b_j \equiv 0 \\
a_jb_jc_jd^2 & = (a_jb_jd)(c_jd) \equiv 0
\end{alignat*}
Hence, the proof is complete.
\end{proof}

\begin{proposition} \label{prop_done_1}
The apolar algebra \(\Apolar(\Fb)\) satisfies the small tangent space condition.
\end{proposition}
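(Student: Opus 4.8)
The plan is to follow the proof of Proposition \ref{prop_done_0} line for line, feeding in the $n=3m+1$ analogues established above. First I would verify that no linear form annihilates $\Fb$: if $\ell=\sum_i(\lambda_ia_i+\mu_ib_i+\nu_ic_i)+\rho d$ lay in $\Ann(\Fb)_1$, then the coefficients in $\ell\cdot\Fb$ of the degree-$2$ basis vectors of $P$ dual to $\alpha_{b_i}\alpha_{c_i}$, $\alpha_{a_i}\alpha_{c_i}$, $\alpha_{a_i}\alpha_{b_i}$ and $\alpha_{a_i}\alpha_{b_{i+1}}$ would equal $\lambda_i$, $\mu_i$, $\nu_i$ and $\rho$ respectively, all of which must be zero; hence $\ell=0$. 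As in Proposition \ref{prop_done_0}, $\Apolar(\Fb)$ then has Hilbert function $(1,n,n,1)$.

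Next, Proposition \ref{prop_1} gives $\Sb_4\subseteq\Jb$, and the ideal $\Jb/(\Ib)^2$ of $\Sb/(\Ib)^2$ is generated, with no generators in degree below $4$, by the classes of the $3m+1=n$ degree-$4$ monomials $a_ia_{i+1}a_{i+2}^2$, $b_ib_{i+1}b_{i+2}^2$, $c_ic_{i+1}c_{i+2}^2$ $(i=1,\dots,m)$ and $a_1b_1c_1d$. Hence $(\Sb/(\Ib)^2)_4$ is spanned by these $n$ classes, so $H(\Sb/(\Ib)^2)_4\le n$ and therefore $H(\Sb/\Ann(\Fb)^2)_4\le n$. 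With Proposition \ref{prop_low_bound} this forces $H(\Sb/\Ann(\Fb)^2)_4=n$, and the squeeze $n\le H(\Sb/\Ann(\Fb)^2)_4\le H(\Sb/(\Ib)^2)_4\le n$ gives $H(\Sb/(\Ib)^2)_4=n$ too.

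It remains to prove $H(\Sb/(\Ib)^2)_5=0$, that is $\Sb_1\cdot(\Sb/(\Ib)^2)_4\subseteq(\Ib)^2$. Since $\Ia\subseteq\Ib$ we have $\Ia^2\subseteq(\Ib)^2$, so the rewritings in the proof of Lemma \ref{lem_special_cases} stay valid modulo $(\Ib)^2$; together with the invariance of $\Fa$ and $\Ia$ under permutations of $\{a,b,c\}$ they give $a_ia_{i+1}a_{i+2}^2\equiv\tfrac12a_i^2b_ic_i$, $b_ib_{i+1}b_{i+2}^2\equiv\tfrac12a_ib_i^2c_i$ and $c_ic_{i+1}c_{i+2}^2\equiv\tfrac12a_ib_ic_i^2$ modulo $(\Ib)^2$. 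Thus $(\Sb/(\Ib)^2)_4$ is already spanned by the monomials $x_ia_ib_ic_i$ ($x\in\{a,b,c\}$) together with $a_1b_1c_1d$, and multiplying any of these by a variable of $\Sb$ produces a monomial of one of the shapes $x_iy_ja_jb_jc_j$, $x_ia_jb_jc_jd$ or $a_jb_jc_jd^2$ treated in Lemma \ref{lem_five_1}, hence lying in $(\Ib)^2$. So $H(\Sb/(\Ib)^2)_5=0$ and $H(\Sb/\Ann(\Fb)^2)_5=0$; combined with the first two steps this is precisely the small tangent space condition for $\Apolar(\Fb)$.

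Since Proposition \ref{prop_1} and Lemma \ref{lem_five_1} carry essentially all of the computational load, the only point that demands care is the reduction in the last step: one must check that every degree-$4$ generator of $\Jb$ not visibly in $(\Ib)^2$ becomes, modulo $(\Ib)^2$, a combination of the monomials $x_ia_ib_ic_i$ and $a_1b_1c_1d$, so that after multiplication by $\Sb_1$ it genuinely falls within the scope of Lemma \ref{lem_five_1}. This amounts to re-invoking the explicit identities of the $n=3m$ case (and, if desired, the reduction $a_1b_1c_1d\equiv a_ib_ic_id$ from the proof of Lemma \ref{lem_1d}); everything else is bookkeeping identical to Proposition \ref{prop_done_0}.
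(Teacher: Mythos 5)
Your proof follows the same four-step route as the paper's own proof: no linear annihilator of \(\Fb\) gives Hilbert function \((1,n,n,1)\); Proposition~\ref{prop_1} and Proposition~\ref{prop_low_bound} together pin down \(H(\Sb/\Ann(\Fb)^2)_4=n\); and Lemma~\ref{lem_five_1} kills degree 5 once \((\Sb/(\Ib)^2)_4\) is seen to be spanned by the monomials \(x_ia_ib_ic_i\) and \(a_1b_1c_1d\). You supply more detail than the paper on the last spanning claim (the paper just asserts it), correctly tracing it back to the identities in the proof of Lemma~\ref{lem_special_cases}; note, though, that the reduction \(a_ia_{i+1}a_{i+2}^2\equiv\tfrac12a_i^2b_ic_i\) requires \(\operatorname{char}\bk\neq 2\), a point the paper also glosses over even while asserting the result in all characteristics.
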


\begin{proof}
It is easy to check that no linear form annihilates \(\Fb\), hence \(\Apolar(\Fb)\) has Hilbert function \((1,n,n,1)\). Proposition \ref{prop_1} implies that \(H(\Sb/(\Ib)^2)_4\leq n\), so \(H(\Sb/\Ann(\Fb)^2)\leq n\). Thus, by Proposition \ref{prop_low_bound}, \(H(\Sb/\Ann(\Fb)^2)_4=n\). Finally, since monomials of the form \(x_ia_ib_ic_i\) and \(a_1b_1c_1d\) generate \((\Sb/(\Ib)^2)_4\) Lemma \ref{lem_five_1} implies that \(H(\Sb/(\Ib)^2)_5=0\), so also \(H(\Sb/\Ann(\Fb)^2)=0\).
\end{proof}

\subsection{Proof of Theorem \ref*{thm_main}; case \(n=3m+2\)}
\hfill

Let \(\Sc=\bk[a_i,b_i,c_i,d,e]_{i=1}^m\) be a polynomial ring of \(n=3m+2\) variables. Recall that we assume \(m\geq 6\). When writing indices we treat them modulo \(m\).

Consider the following polynomial
\[\Fc=\sum_{i=1}^m a_ib_ic_i + a_ia_{i+1}^2 + b_ib_{i+1}^2 + c_ic_{i+1}^2 + a_ib_{i+1}d + b_ic_{i+1}e.\]
Also by \(\Fc\) we denote its dual element in the divided power ring associated to \(\Sc\).

Let \(\Ic\) be the smallest ideal such that the following remark holds.

\begin{remark}
For all \(x,y\in\{a,b,c\}\), and for all indices \(i,j\), if \(j\notin\{i-1,i,i+1\}\), then \(x_iy_j\in \Ic\).

For all \(x,y\in\{a,b,c\}\), and for all indices \(i,j\), if \(x\ne y\), then \(x_iy_{i+1}-x_jy_{j+1}\in \Ic\).

For all \(x,y\in\{a,b,c\}\), and for all indices \(i,j\), one of the following holds.
\begin{enumerate}
\item \(x_iy_j\) is contained in \(\Ic\).
\item \(x\ne y\) and \(j\in\{i-1,i+1\}\).
\item There exists an index \(k\) and \(p,q\in\{a,b,c\}\), \(p\ne q\), such that \(x_iy_j-p_kq_k\in \Ic\).
\end{enumerate}

For any \(x\in\{a,b,c\}\), any \(y\in\{d,e\}\), and any index \(i\), one of the following holds.
\begin{enumerate}
\item \(x_iy\) is contained in \(\Ic\).
\item There exists an index \(j\) and \(p,q\in\{a,b,c\}\), \(p\ne q\), such that \(x_iy-p_jq_j\in \Ic\).
\end{enumerate}
\end{remark}

The polynomial \(\Fc\) is chosen such that \(\Ic\subset\Ann\,\Fc\).

Let \(\Jc\) denote the ideal \((\Ic)^2 + \langle a_ia_{i+1}a_{i+2}^2,b_ib_{i+1}b_{i+2}^2,c_ic_{i+1}c_{i+2}^2 \,|\, i=1,...,m\rangle+\langle a_1b_1c_1d,a_1b_1c_1e\rangle\). Note that \(\Fc,\Ic\), and \(\Jc\) are invariant under index translation.

We want to show that \(\Apolar(\Fc)\) satisfies the small tangent space condition. The main part of the proof is checking that all polynomials of degree \(4\) are contained in \(\Jc\), hence that \(H(\Sc/(\Ic)^2)_4=n\).

In this section, we use the following notation. For polynomials \(Q,R\in \Sc\) we write \(Q\equiv R\) if \(Q\) is equal to \(R\) in \(\Sc/(\Ic)^2\).

\begin{lemma} \label{lem_no_de}
All monomials of degree \(4\), not divisible by \(de\) are contained in \(\Jc\).
\end{lemma}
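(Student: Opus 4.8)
The plan is to reduce both parts of the statement to the case $n=3m+1$, exactly as Lemma \ref{lem_no_d} reduced that case to Proposition \ref{prop_0}. The key observation is that a degree-$4$ monomial of $\Sc$ not divisible by $de$ is either not divisible by $e$ or not divisible by $d$ (or both), so it is enough to handle these two families of monomials separately.

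For a monomial $\mu$ not divisible by $e$ we have $\mu\in\Sb=\bk[a_i,b_i,c_i,d]_{i=1}^m\subseteq\Sc$, so I would first show $\Ib\subseteq\Ic$ and $\Jb\subseteq\Jc$. Since the terms of $\Fc$ not involving $e$ are precisely $\Fb$, and no term of $\Fc$ involves both $d$ and $e$, each element singled out by the remark defining $\Ib$ — the monomials $x_iy_j$ with $j\notin\{i-1,i,i+1\}$, the binomials $x_iy_{i+1}-x_jy_{j+1}$ and $x_iy_j-p_kq_k$ with $x,y,p,q\in\{a,b,c\}$, and the $x_id$ or $x_id-p_jq_j$ — still annihilates $\Fc$ and is therefore among the elements singled out by the remark defining $\Ic$; this is a short, routine check with the explicit $\Fc$. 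Hence $(\Ib)^2\subseteq(\Ic)^2$, and since the triples $a_ia_{i+1}a_{i+2}^2,b_ib_{i+1}b_{i+2}^2,c_ic_{i+1}c_{i+2}^2$ appear in both $\Jb$ and $\Jc$ and $\langle a_1b_1c_1d\rangle\subseteq\langle a_1b_1c_1d,a_1b_1c_1e\rangle$, we get $\Jb\subseteq\Jc$. Proposition \ref{prop_1} then yields $\mu\in\Jb\subseteq\Jc$.

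For a monomial $\mu$ not divisible by $d$ we have $\mu\in\bk[a_i,b_i,c_i,e]_{i=1}^m\subseteq\Sc$, and I would transport it into $\Sb$ via the $\bk$-algebra isomorphism $\sigma\colon\Sb\to\bk[a_i,b_i,c_i,e]_{i=1}^m$ sending $a_i\mapsto b_i$, $b_i\mapsto c_i$, $c_i\mapsto a_i$, $d\mapsto e$. A direct computation shows $\sigma(\Fb)$ equals the polynomial obtained from $\Fc$ by deleting every term divisible by $d$. One then checks, again by a mechanical case analysis, that $\sigma$ sends each element singled out by the remark defining $\Ib$ to one singled out by the remark defining $\Ic$: the $abc$-relations get permuted among themselves, using the $S_3$-symmetry already present for $\Ia$, while $x_id$ and $x_id-p_jq_j$ go to $\sigma(x)_ie$ and $\sigma(x)_ie-\sigma(p)_j\sigma(q)_j$. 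Thus $\sigma(\Ib)\subseteq\Ic$; moreover $\sigma$ permutes the triples $a_ia_{i+1}a_{i+2}^2,b_ib_{i+1}b_{i+2}^2,c_ic_{i+1}c_{i+2}^2$ and sends $a_1b_1c_1d$ to $a_1b_1c_1e$, so $\sigma(\Jb)\subseteq\Jc$. Since $\sigma^{-1}(\mu)$ is a degree-$4$ monomial of $\Sb$, Proposition \ref{prop_1} gives $\sigma^{-1}(\mu)\in\Jb$, whence $\mu\in\sigma(\Jb)\subseteq\Jc$.

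I expect the only genuine work to be the two ``routine checks'' above: verifying that the generators of $\Ib$, and their images under $\sigma$, lie in $\Ic$ — equivalently, that the relevant monomials and binomials annihilate $\Fc$ — and in particular matching the binomials $x_id-p_jq_j$ and their $\sigma$-images against the ones prescribed by the remark defining $\Ic$. This is entirely mechanical, but it is the main point to get right.
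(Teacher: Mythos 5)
Your proposal is correct and follows the same approach as the paper: the paper likewise invokes two inclusions $\Sb \hookrightarrow \Sc$, the identity on $a,b,c,d$ and the map sending $a,b,c,d$ to $b,c,a,e$, observes that each carries $\Ib$ into $\Ic$ (hence $\Jb$ into $\Jc$), and concludes via Proposition~\ref{prop_1} since every degree-$4$ monomial not divisible by $de$ lies in one of the two subrings. Your write-up simply spells out the two cases and the ``routine checks'' that the paper leaves as a terse ``Note that $\Ib\subset\Ic\cap\Sb$.''
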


\begin{proof}
We have two inclusions of rings \(\Sb\subset \Sc\), one takes \(a,b,c,d\) to \(a,b,c,d\) respectively, the other takes \(a,b,c,d\) to \(b,c,a,e\) respectively. Note that, in both cases, \(\Ib\subset \Ic\cap \Sb\), so \(\Jb\subset \Jc\cap \Sb\). Since every monomial not divisible by \(de\) is contained in at least one of those subrings, the claim follows from Proposition~\ref{prop_1}.
\end{proof}

\begin{lemma} \label{lem_de}
For all \(x,y\in\{a,b,c\}\), and for all indices \(i,j\), the monomial \(x_iy_jde\) is contained in \(\Jc\).
\end{lemma}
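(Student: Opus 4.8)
The plan is to imitate the proof of Lemma~\ref{lem_2d}, which disposed of the monomials $x_iy_jd^2$: factor the target as $x_iy_jde=(x_id)(y_je)$ and reduce each factor modulo $\Ic$ using the last clause of the Remark that defines $\Ic$. Concretely, for every $x\in\{a,b,c\}$ and every index $i$ that clause lets one pick a ``normal form'' $u\in\Sc$ for $x_id$: either $u=0$ (when $x_id\in\Ic$) or $u=p_kq_k$ for some $p\ne q$ in $\{a,b,c\}$ and some index $k$ (when $x_id-p_kq_k\in\Ic$). In the same way one picks a normal form $v$ for $y_je$, so that $x_id-u\in\Ic$ and $y_je-v\in\Ic$.

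The first step is to record the elementary identity
\[
x_iy_jde=(x_id-u)(y_je-v)+(x_id)v+u(y_je)-uv,
\]
valid in $\Sc$ for arbitrary $u,v$. The leading term is a product of two elements of $\Ic$, hence lies in $(\Ic)^2\subseteq\Jc$.

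The second step is to check that each of the three remaining terms lands in $\Jc$ by Lemma~\ref{lem_no_de}. Each is either zero (precisely when the relevant normal form vanishes) or a degree~$4$ monomial in which at most one of $d,e$ occurs: $(x_id)v$ carries a $d$ but no $e$, $u(y_je)$ carries an $e$ but no $d$, and $uv$ carries neither. In every case the monomial is not divisible by $de$, so Lemma~\ref{lem_no_de} applies, and summing gives $x_iy_jde\in\Jc$.

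There is no genuine obstacle here beyond bookkeeping; the one point deserving attention is verifying that the three leftover monomials truly miss the product $de$, which is exactly what makes Lemma~\ref{lem_no_de} usable and lets the argument run uniformly in $x,y,i,j$, avoiding the case analyses needed in Lemmas~\ref{lem_1d} and~\ref{lem_2d}.
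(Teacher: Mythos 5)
Your proof is correct and follows essentially the same approach as the paper: reduce both $x_id$ and $y_je$ modulo $\Ic$ to normal forms, split off a product in $(\Ic)^2$, and dispose of the remaining terms by Lemma~\ref{lem_no_de} because each misses the factor $de$. The only difference is cosmetic — the paper carries out the same decomposition via an explicit four-case split on whether $x_id\in\Ic$ and $y_je\in\Ic$, whereas your single identity $x_iy_jde=(x_id-u)(y_je-v)+(x_id)v+u(y_je)-uv$ handles all four cases uniformly.
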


\begin{proof}
If both \(x_id\) and \(y_je\) are in \(\Ic\), then \(x_iy_jde\in (\Ic)^2\). If only \(x_id\) is in \(\Ic\), then there exist \(z,w\in\{a,b,c\}\) and an index \(k\) such that \(y_je-z_kw_k\in \Ic\), so \(x_iy_jde = (x_id)(y_je-z_kw_k) + x_iz_kw_kd\). Similarly, if only \(y_je\) is in \(\Ic\), then there exist \(z,w\in\{a,b,c\}\) and an index \(k\) such that \(x_id-z_kw_k\in \Ic\), so \(x_iy_jde=(x_id-z_kw_k)(y_je)+ y_jz_kw_ke\). If both \(x_id\) and \(y_je\) are not in \(\Ic\), then there exist \(w,z,p,q\in\{a,b,c\}\) and indices \(k,t\) such that \(x_id-w_kz_k\in \Ic\) and \(y_jd-p_tq_t\in \Ic\), so \(x_iy_jde = (x_id-p_tq_t)(y_je-z_kw_k) + x_iz_kw_kd + y_jp_tq_te - z_kw_kp_tq_t\). Hence, the claim follows from Lemma \ref{lem_no_de}.
\end{proof}

\begin{proposition} \label{prop_2}
Every degree \(4\) homogenous polynomial of \(\Sc\) is contained in \(\Jc\).
\end{proposition}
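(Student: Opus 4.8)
The plan is to show that every degree-$4$ monomial of $\Sc$ lies in $\Jc$; the proposition then follows by linearity. I would organize the monomials by divisibility by $de$. A degree-$4$ monomial not divisible by $de$ is covered by Lemma \ref{lem_no_de}, so the substance lies in monomials of the form $de\cdot M$ with $M$ a monomial of degree $2$. If $M=x_iy_j$ with $x,y\in\{a,b,c\}$, then Lemma \ref{lem_de} applies at once. The monomials still to be treated are therefore $x_id^2e$ and $x_ide^2$ for $x\in\{a,b,c\}$ and any index $i$, together with $d^3e$, $d^2e^2$, and $de^3$.

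For these I would use two inputs. First, $d^2$, $de$, and $e^2$ all annihilate $\Fc$ — no monomial of $\Fc$ contains $d$ or $e$ to degree $\geq 2$, nor both $d$ and $e$ — so $d^2,de,e^2\in\Ic$. Second, the Remark defining $\Ic$ gives, for each $x\in\{a,b,c\}$, each $z\in\{d,e\}$, and each index $i$, that either $x_iz\in\Ic$ or $x_iz-p_jq_j\in\Ic$ for some index $j$ and some $p\ne q$ in $\{a,b,c\}$. Using the first input, $d^3e=(d^2)(de)$, $d^2e^2=(d^2)(e^2)$, and $de^3=(de)(e^2)$ are products of two elements of $\Ic$, hence lie in $(\Ic)^2\subset\Jc$. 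For $x_id^2e$ I would write it as $(x_ie)(d^2)$: if $x_ie\in\Ic$ this is again in $(\Ic)^2$, and otherwise, since $x_ie-p_jq_j\in\Ic$,
\[
x_id^2e=(x_ie-p_jq_j)(d^2)+p_jq_jd^2,
\]
where the first summand lies in $(\Ic)^2$ and the second, being a degree-$4$ monomial not divisible by $de$, lies in $\Jc$ by Lemma \ref{lem_no_de}. The monomial $x_ide^2=(x_id)(e^2)$ is handled identically with the roles of $d$ and $e$ exchanged. This mirrors the final step in the proof of Proposition \ref{prop_1}.

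I do not expect a serious obstacle. The one point needing attention — the step I would flag as the crux — is checking that every correction term $p_jq_jd^2$ or $p_jq_je^2$ produced by the Remark's alternatives is genuinely of the kind covered by Lemma \ref{lem_no_de}, that is, not divisible by $de$; this holds because $p,q\in\{a,b,c\}$, so these monomials already lie in one of the two copies of $\Sb$ inside $\Sc$ exploited in the proof of Lemma \ref{lem_no_de}. Everything else is immediate from $d^2,de,e^2\in\Ic$ together with the two preceding lemmas.
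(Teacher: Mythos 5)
Your argument is correct and follows the same structure as the paper: reduce via Lemma \ref{lem_no_de} and Lemma \ref{lem_de}, then handle the nine remaining monomial shapes divisible by $d^2e$, $de^2$, $d^3e$, $d^2e^2$, or $de^3$ by factoring out $d^2$, $de$, or $e^2\in\Ic$. The one mild difference is in those residual nine cases: the paper writes each one out explicitly with a hand-picked binomial so that the whole expression visibly lies in $(\Ic)^2$ (e.g.\ $b_id^2e=(b_ie-c_{i+2}^2)(d^2)+(c_{i+2}d)^2$), whereas you invoke the generic alternative from the Remark ($x_iy-p_jq_j\in\Ic$ with $p\ne q$) and push the correction term $p_jq_jd^2$ or $p_jq_je^2$ into $\Jc$ via Lemma \ref{lem_no_de}. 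Your version is more uniform and only uses data the Remark guarantees, at the cost of concluding membership in $\Jc$ rather than in $(\Ic)^2$ for those terms; since the proposition asks only for $\Jc$, both are fully adequate, and you correctly flag and check the crux — that the correction terms, involving only $a,b,c$ and a single $d^2$ or $e^2$, avoid $de$ and hence fall under Lemma \ref{lem_no_de}.
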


\begin{proof}
Lemmas \ref{lem_no_de} and \ref{lem_de} cover most of the cases. The rest we rewrite as follows.
\begin{alignat*}{2}
a_id^2e & = (a_ie)(d^2) \equiv 0 \\
b_id^2e & = (b_ie-c_{i+2}^2)(d^2) + (c_{i+2}d)^2 \equiv 0 \\
c_id^2e & = (c_id)(de) \equiv 0 \\
a_ide^2 & = (a_ie)(de) \equiv 0 \\
b_ide^2 & = (b_id-a_i^2)(e^2) + (a_ie)^2 \equiv 0 \\
c_ide^2 & = (c_id)(e^2)\equiv 0 \\
d^3e & = (d^2)(de) \equiv 0 \\
d^2e^2 & = (de)^2 \equiv 0 \\
de^3 & = (de)(e^2) \equiv 0
\end{alignat*}
This finishes the proof.
\end{proof}

\begin{lemma} \label{lem_five_2}
For all \(x,y\in\{a,b,c\}\), \(z,w\in \{d,e\}\), and for all indices \(i,j\), the monomials \(x_iy_ja_jb_jc_j\), \(x_ia_jb_jc_jz\) and \(a_jb_jc_jzw\) are in \((\Ic)^2\).
\end{lemma}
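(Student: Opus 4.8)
The plan is to imitate the proofs of Lemmas \ref{lem_five_0} and \ref{lem_five_1}, using the two ring inclusions $\Sb\hookrightarrow\Sc$ from the proof of Lemma \ref{lem_no_de} to dispose of every monomial except the one divisible by both $d$ and $e$, which must be handled by a short direct computation.

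First, $x_iy_ja_jb_jc_j$ lies in the subring $\Sa\subset\Sc$; since $\Ia\subseteq\Ib\subseteq\Ic$ we have $(\Ia)^2\subseteq(\Ic)^2$, so this monomial is in $(\Ic)^2$ by Lemma \ref{lem_five_0}. Now let $\iota_1\colon\Sb\hookrightarrow\Sc$ be the inclusion fixing $a,b,c,d$ and $\iota_2\colon\Sb\hookrightarrow\Sc$ the one sending $a,b,c,d$ to $b,c,a,e$; as recorded in the proof of Lemma \ref{lem_no_de} we have $\iota_k(\Ib)\subseteq\Ic$, hence $\iota_k((\Ib)^2)\subseteq(\Ic)^2$ for $k=1,2$. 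By Lemma \ref{lem_five_1} the monomials $x_ia_jb_jc_jd$ and $a_jb_jc_jd^2$ lie in $(\Ib)^2$. Applying $\iota_1$ settles $x_ia_jb_jc_jd$ and $a_jb_jc_jd^2$. Applying $\iota_2$ gives $\iota_2(x)_ia_jb_jc_je$ and $a_jb_jc_je^2$ in $(\Ic)^2$; since $\iota_2$ permutes $\{a,b,c\}$, as $x$ runs over $\{a,b,c\}$ so does $\iota_2(x)$, so this settles $x_ia_jb_jc_je$ and $a_jb_jc_je^2$. Thus only $a_jb_jc_jde$ remains.

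For $a_jb_jc_jde$, which lies in neither image of $\iota_1,\iota_2$ and is the crux of the lemma, I would argue directly. Since $d\cdot\Fc=\sum_i a_ib_{i+1}$ involves no letter $c$, and $e\cdot\Fc=\sum_i b_ic_{i+1}$ involves no letter $a$, the quadratic monomials $c_jd$ and $a_je$ annihilate $\Fc$; on the other hand each of the products $a_kb_k$, $a_kc_k$, $b_kc_k$ contracts the summand $a_kb_kc_k$ of $\Fc$ to a nonzero linear form, so none of them annihilates $\Fc$. Because $\Ic\subseteq\Ann\Fc$, alternative (2) in the $\{d,e\}$-clause of the defining remark is impossible for $c_jd$ and for $a_je$, which forces $c_jd\in\Ic$ and $a_je\in\Ic$. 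Then $b_jc_jd=b_j\cdot(c_jd)\in\Ic$ since $\Ic$ is an ideal, and $a_jb_jc_jde=(a_je)\cdot(b_jc_jd)$ is a product of two elements of $\Ic$, hence lies in $(\Ic)^2$. I expect $a_jb_jc_jde$ to be the only real obstacle; the remaining monomials are routine bookkeeping through the two inclusions.
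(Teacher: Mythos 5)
Your proposal is correct and follows essentially the same route as the paper: reduce via the two inclusions $\Sb\hookrightarrow\Sc$ and Lemma \ref{lem_five_1} to the single monomial $a_jb_jc_jde$, and then factor it as $(a_je)(c_jd)b_j\in(\Ic)^2$. Your explicit justification that $c_jd$ and $a_je$ actually lie in $\Ic$ (by ruling out alternative (2) of the defining remark via apolarity to $\Fc$) is a detail the paper leaves implicit, but it is the same argument.
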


\begin{proof}
We have two inclusions of rings \(\Sb\subset \Sc\), one takes \(a,b,c,d\) to \(a,b,c,d\) respectively, the other takes \(a,b,c,d\) to \(b,c,a,e\) respectively. Hence, in view of Lemma \ref{lem_five_1} it suffices to consider \(a_jb_jc_jde\). We have \(a_jb_jc_jde=(a_je)(c_jd)b_j\in (\Ic)^2\).
\end{proof}

\begin{proposition} \label{prop_done_2}
The apolar algebra \(\Apolar(\Fc)\) satisfies the small tangent space condition.
\end{proposition}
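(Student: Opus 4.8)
The plan is to imitate the three-step argument already used for Propositions~\ref{prop_done_0} and~\ref{prop_done_1}. First I would determine the Hilbert function of $\Apolar(\Fc)$. Each variable of $\Sc$ occurs in $\Fc$ inside a monomial it shares with no other summand (for instance $a_i$ appears in $a_ia_{i+1}^2$ and $d$ appears in $a_1b_1d$), so a short direct computation shows that no nonzero linear form annihilates $\Fc$; hence $H(\Apolar(\Fc))_1 = n$. Since $\Apolar(\Fc)$ is a graded zero-dimensional Gorenstein algebra with socle in degree $3$, Theorem~\ref{thm_gor_class} then forces its Hilbert function to be $(1,n,n,1)$.

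Next I would bound $H(\Sc/\Ann(\Fc)^2)_4$ from above. Since $\Ic \subseteq \Ann(\Fc)$ we have $(\Ic)^2 \subseteq \Ann(\Fc)^2$, so it suffices to bound $H(\Sc/(\Ic)^2)_4$. Proposition~\ref{prop_2} shows that every degree-$4$ polynomial of $\Sc$ lies in $\Jc$, and $\Jc$ differs from $(\Ic)^2$ only by the $3m + 2 = n$ adjoined degree-$4$ monomials $a_ia_{i+1}a_{i+2}^2$, $b_ib_{i+1}b_{i+2}^2$, $c_ic_{i+1}c_{i+2}^2$ ($i = 1,\dots,m$) together with $a_1b_1c_1d$ and $a_1b_1c_1e$; therefore $H(\Sc/(\Ic)^2)_4 \le n$, whence $H(\Sc/\Ann(\Fc)^2)_4 \le n$. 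Combined with the lower bound $H(\Sc/\Ann(\Fc)^2)_4 \ge n$ from Proposition~\ref{prop_low_bound}, this gives $H(\Sc/\Ann(\Fc)^2)_4 = n$.

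Finally I would show $H(\Sc/\Ann(\Fc)^2)_5 = 0$. The identity $a_i^2 b_i c_i \equiv 2\,a_i a_{i+1} a_{i+2}^2$, established inside the proof of Lemma~\ref{lem_special_cases}, together with its analogues under permuting $\{a,b,c\}$, holds in $\Sa/\Ia^2$ and hence in $\Sc/(\Ic)^2$ via the inclusion $\Sa \subseteq \Sc$, which carries $\Ia^2$ into $(\Ic)^2$. It lets me rewrite each of the $3m$ adjoined monomials of the $a,b,c$-type as a scalar multiple of some $x_i a_i b_i c_i$, so that $(\Sc/(\Ic)^2)_4$ is spanned by the $n$ monomials $x_i a_i b_i c_i$ ($x \in \{a,b,c\}$, $i = 1,\dots,m$), $a_1 b_1 c_1 d$ and $a_1 b_1 c_1 e$. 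By Lemma~\ref{lem_five_2} the product of any of these with any variable of $\Sc$ lies in $(\Ic)^2$; hence $(\Sc/(\Ic)^2)_5 = 0$, and a fortiori $H(\Sc/\Ann(\Fc)^2)_5 = 0$. Together with the previous paragraphs this is precisely the small tangent space condition.

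I do not expect any real obstacle here: the two substantial inputs — Proposition~\ref{prop_2} controlling degree $4$ and Lemma~\ref{lem_five_2} controlling degree $5$ — are already in place, so all that is genuinely new is the elementary linear-form verification of the first step and the bookkeeping in the last step that cuts the degree-$4$ spanning set down to $n$ monomials. Both are routine.
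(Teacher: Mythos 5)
Your proof follows the same three-step structure as the paper's — a linear-form check for the Hilbert function, the degree-$4$ squeeze via Proposition~\ref{prop_2} and Proposition~\ref{prop_low_bound}, and annihilation in degree $5$ via Lemma~\ref{lem_five_2} — and the first two steps are fine. But there is a characteristic-dependent slip in the last step. You pass from the adjoined generators $a_ia_{i+1}a_{i+2}^2$ (and their $b$- and $c$-analogues) to the monomials $x_ia_ib_ic_i$ with \emph{matching} index by inverting $a_i^2b_ic_i \equiv 2\,a_ia_{i+1}a_{i+2}^2$. That requires dividing by $2$, and in characteristic $2$ the congruence reads $a_i^2b_ic_i\equiv 0$, so the monomials $x_ia_ib_ic_i$ do \emph{not} span $(\Sc/(\Ic)^2)_4$. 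Since Theorem~\ref{thm_main} is asserted for all characteristics once $n\geq 18$, this gap matters.

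The fix is already contained in the proof of Lemma~\ref{lem_special_cases} and involves no scalar: the computation there gives $a_{i-1}a_ib_ic_i \equiv a_{i-1}a_ia_{i+1}^2$ in $\Sa/\Ia^2$, i.e.\ $a_ia_{i+1}a_{i+2}^2 \equiv a_i\cdot(a_{i+1}b_{i+1}c_{i+1})$, valid over any field. Thus $(\Sc/(\Ic)^2)_4$ is spanned by monomials $x_k\,a_jb_jc_j$ with $k$ not necessarily equal to $j$, together with $a_1b_1c_1d$ and $a_1b_1c_1e$; and Lemma~\ref{lem_five_2} is phrased precisely for mixed indices (it covers $x_iy_ja_jb_jc_j$ and $x_ia_jb_jc_jz$), so the degree-$5$ conclusion goes through unchanged. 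The paper's own phrase ``monomials of the form $x_ia_ib_ic_i$'' should be read in this looser sense; taken literally it would have the same defect you inherited. With that one adjustment your argument coincides with the intended proof.
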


\begin{proof}
It is easy to check that no linear form annihilates \(\Fc\), hence \(\Apolar(\Fc)\) has Hilbert function \((1,n,n,1)\). Proposition \ref{prop_2} implies that \(H(\Sc/(\Ic)^2)_4\leq n\), so \(H(\Sc/\Ann(\Fc)^2)\leq n\). Thus, by Proposition \ref{prop_low_bound}, \(H(\Sc/\Ann(\Fc)^2)_4 =n\). Finally, since monomials of the form \(x_ia_ib_ic_i\), \(a_1b_1c_1d\), and \(a_1b_1c_1e\) generate \((\Sc/(\Ic)^2)_4\) Lemma \ref{lem_five_2} implies that \(H(\Sc/(\Ic)^2)=0\), so also \(H(\Sc/\Ann(\Fc)^2)=0\).
\end{proof}

\section{Computer computations for \(n<18\)} \label{chap_computer}

Let \(S\) be a polynomial ring of \(n\) variables. In this chapter, we give examples of degree \(3\) polynomials \(F\) such that \(\Apolar(F)\) satisfies the small tangent space condition for \(n=6\) and \(7<n<18\) (the case \(n\geq18\) is covered by Theorem \ref{thm_main}).

We have checked on computer, using Macaulay2 \cite{M2}, that they are indeed correct for fields of characteristic 0, 2, and 3. We believe that they work in any characteristic, though a proof would probably require a direct verification, so we restrict ourselves to supplying a computer code which one can use to verify these examples in any given characteristic.

Note that in order to verify Conjecture \ref{conj} for a field \(\bk\) of characteristic \(0\) it suffices to check \(\bk=\mathbb Q\). Similarly, for a field \(\bk\) of characteristic \(p\) is suffices to check \(\bk=\mathbb F_p\).

Our examples from chapter \ref{chp_theorem} work also for \(n\geq 9\). For \(n=6\) and \(n=8\) we construct different polynomials. For \(n=6\) we have chosen the polynomial
\[F=a_1b_1c_1 + a_2b_2c_2 + a_1a_2^2 + b_1b_2^2 + c_1c_2^2 + a_1^3 + b_1^3 + c_1^3.\]
For \(n=8\) we have chosen
\[F=a_1b_1c_1 + a_2b_2c_2 + a_1a_2^2 + b_1b_2^2 + c_1c_2^2 + a_1de + b_1^2d + c_1^2e.\]

\subsection{Macaulay2 code}
\hfill

In this section, we describe the computer code we have used to verify our examples. First one needs to chose a field, hence to type
\begin{verbatim}
  kk = QQ;
\end{verbatim}
or (replacing \(p\) by a prime number of choice)
\begin{verbatim}
  kk = ZZ/p;
\end{verbatim}
into the Macaulay2 console. Then, one needs to specify the number of variables of the polynomial ring typing
\begin{verbatim}
  n = ?
\end{verbatim}
with \(?\) replaced by the chosen number. If \(n\) was chosen to be \(6\), then the following code generates the appropriate polynomial.
\begin{verbatim}
  S = kk[a_1,a_2,b_1,b_2,c_1,c_2];
  F = a_1*b_1*c_1 + a_2*b_2*c_2 + a_1*a_2^2 + b_1*b_2^2 +
      c_1*c_2^2 + a_1^3 + b_1^3 + c_1^3;
\end{verbatim}
If \(n=8\) one needs to enter the following lines.
\begin{verbatim}
  S = kk[a_1,a_2,b_1,b_2,c_1,c_2,d,e];
  F = a_1*b_1*c_1 + a_2*b_2*c_2 + a_1*a_2^2 + b_1*b_2^2 +
      c_1*c_2^2 + a_1*d*e + b_1^2*d + c_1^2*e;
\end{verbatim}
If \(n\) is divisible by \(3\) and greater than \(8\), then the following code needs to be entered.
\begin{verbatim}
  m = n//3;
  S = kk[a_1..a_m,b_1..b_m,c_1..c_m];
  F = 0;
  for i in 1..m-1 do F = F + a_i*b_i*c_i + a_i*a_(i+1)^2 +
      b_i*b_(i+1)^2 + c_i*c_(i+1)^2;
  F = F + a_m*b_m*c_m + a_m*a_1^2 + b_m*b_1^2 + c_m*c_1^2;
\end{verbatim}
If \(n\) gives remainder \(1\) upon division by \(3\) and is greater than \(8\), then one uses the following code.
\begin{verbatim}
  m = (n-1)//3;
  S = kk[a_1..a_m,b_1..b_m,c_1..c_m,d];
  F = 0;
  for i in 1..m-1 do F = F + a_i*b_i*c_i + a_i*a_(i+1)^2 +
      b_i*b_(i+1)^2 + c_i*c_(i+1)^2 + a_i*b_(i+1)*d;
  F = F + a_m*b_m*c_m + a_m*a_1^2 + b_m*b_1^2 + c_m*c_1^2 +
      a_m*b_1*d;
\end{verbatim}
Lastly, if \(n\) gives remainder \(2\) upon division by \(3\) and is greater than \(8\), then the following code needs to be used.
\begin{verbatim}
  m = (n-2)//3;
  S = kk[a_1..a_m,b_1..b_m,c_1..c_m,d,e];
  F = 0;
  for i in 1..m-1 do F = F + a_i*b_i*c_i + a_i*a_(i+1)^2 +
      b_i*b_(i+1)^2 + c_i*c_(i+1)^2 + a_i*b_(i+1)*d +
      b_i*c_(i+1)*e;
  F = F + a_m*b_m*c_m + a_m*a_1^2 + b_m*b_1^2 + c_m*c_1^2 +
      a_m*b_1*d + b_m*c_1*e;
\end{verbatim}
To verify whether the apolar algebra induced by the generated polynomial satisfies the small tangent space condition one can run the following lines.
\begin{verbatim}
  I = ideal fromDual(matrix{{F}}, DividedPowers => true);
  if (hilbertFunction(0,S/I) == 1 and
      hilbertFunction(1,S/I) == n and
      hilbertFunction(4,S/I^2) == n and
      hilbertFunction(5,S/I^2) == 0)
      then print True else print False;
\end{verbatim}
If the answer given by Macaulay2 reads "True", then \(\Apolar(F)\) satisfies the small tangent space condition. If on the other hand the answer reads "False", then \(\Apolar(F)\) does not satisfy the small tangent space condition.

\bibliographystyle{alpha}
\bibliography{references}

\end{document}